\theoremstyle{plain}
\newtheorem{theorem}{Theorem}[section]
\newtheorem{proposition}[theorem]{Proposition}
\theoremstyle{definition}
\newtheorem{definition}[theorem]{Definition}
\theoremstyle{remark}
\newtheorem{remark}[theorem]{Remark}
\begin{document}
	
	\title{Minimal Graph Transformations and their Classification}
	\author{Sam K Mathew}
	\date{}
	\maketitle
	
\begin{abstract}
	This paper presents a complete classification of minimal graph surfaces that admit graphical transformations into other minimal surfaces. These transformations are functions that map the height function of a minimal graph surface to another height function, which also describes a minimal graph surface. While trivial maps such as translations and reflections exist, we formulate and solve the Non‑Trivial Minimal Graph Transformation Problem, governed by a coupled system of partial differential equations. A central result establishes the rigorous equivalence of this original system to a modified problem for a harmonic function. Through a complex variable approach and a weakening technique, the analysis is reduced to solving a fundamental ordinary differential equation parameterized by a real constant \(k\). The explicit integration of this ODE involves various elliptic integrals and identities of elliptic functions. Solving the ODE for the three cases \(k = 0\), \(k > 0\), and \(k < 0\) yields the full classification of all admissible surfaces and their associated transformations. This process yeilds several classes of minimal surfaces that, to the best of the author’s knowledge, constitute new families of minimal surfaces.
\end{abstract}
	
	\section{Introduction}
	
	 The study of transformations of minimal surfaces has been a central theme in differential geometry of minimal surface for a long time, beginning with the deformation family of helicoid to catenoid, where helicoid can be deformed to a catenoid by including an \(e^{i\theta}\) inside the Weiersterass Enneper integral representation of these minimal surfaces. Infact for any minimal surface there is a such a deformation family of minimal surface, which is precisely its associated family. Over time, this field has expanded to encompass a rich variety of geometric and analytic methods.
	 
	 Thybaut \cite{Thy} established that minimal surfaces admit transformations analogous to the Bäcklund transformations of pseudospherical surfaces, where the surface and its transform serve as focal sheets of a $W$-congruence. Eisenhart \cite{Eis} established a transformation for the broader class of surfaces characterized by an isothermal spherical representation of their lines of curvature. Notably, this transformation is minimality preserving, when the initial surface is minimal, its transform remains minimal. More recently, Corro, Ferreira, and Tenenblat \cite{CFT} have utilized Ribaucour transformations to expand the family of known minimal surfaces by relating these transformations to the generation of planar embedded ends . By applying these methods to classical examples like Enneper’s surface and the catenoid, they obtained new families of complete, genus-zero minimal surfaces in $\mathbb{R}^3$ characterized by a varying number of embedded planar ends.
	 
	 In a more recent study, B{\"a}ck~\cite{PB} examined B{\"a}cklund transformations for minimal surfaces, showing how such transformations act on a given surface; for instance, the catenoid to produce an infinite family of new minimal surfaces. These transformations function as mappings between solutions of the elliptic Liouville equation. By combining classical geometric ideas with novel algebraic techniques, this work provides a systematic framework for generating new solutions that had not been identified previously in the literature.
	 
	 In this paper, we address a question that is fundamentally similar to earlier works mentioned above: Given a minimal graph surface \(f\), we answer the question of when does there exist a smooth real‑valued function \(g\) such that the composition \(g\circ f\) is again a minimal graph surface. To put this in more formal terms, we define the following terminologies:
	 
	 \begin{definition}
	 	Let \(\Omega\) be an open connected set in \(\mathbb{R}^2\) and \(f: \Omega \subseteq \mathbb{R}^2 \longrightarrow \mathbb{R}\) be a Smooth Function. Then \(f\) is called as a \textbf{Minimal Graph Surface} if the graph of \(f\) forms a Minimal Surface in the Euclidean Space \(\mathbb{E}^3\). 
	 	
	 	Alternatively a Minimal Graph Surface can also be defined using the following Minimal Surface Equation:
	 	\begin{equation}
	 		\label{eqn: MSE}
	 		(1 + f_y^2) f_{xx} - 2 f_{xy} f_x f_y + (1 + f_x^2) f_{yy} = 0
	 	\end{equation}
	 \end{definition}
	 
	 \begin{definition}
	 	A Smooth Map \(g: U \subseteq \mathbb{R} \longrightarrow \mathbb{R}\) is called a \textbf{Minimal Graph Transformation} of the Minimal Graph Surface \(f: \Omega \longrightarrow \mathbb{R} \) if \(f(\Omega)\subseteq U\) and the composition
	 	\[
	 	g \circ f:\Omega\longrightarrow \mathbb{R}
	 	\]
	 	is again a Minimal Graph Surface.
	 \end{definition}
	 
	 In other words, the function $g$ transforms the height function $f(x,y)$ at each point $(x,y)$ of a Minimal Graph Surface $(x,y,f(x,y))$ into another height function $(x,y, (g \circ f)(x,y))$, which also describes a Minimal Graph Surface. We  are now ready to state the Problem Statement.
	 \subsection{Problem Statement:}
	 \begin{enumerate}
	 	\item Find all maps $g:\mathbb{R}\longrightarrow\mathbb{R}$ such that $g$ is a Minimal Graph Transformation for \emph{every} Minimal Graph Surface $f: \Omega \longrightarrow \mathbb{R}$. We call these type of maps \(g\) as \textbf{Trivial Minimal Graph Transformations (TMG Transformations)}. Ofcourse such maps do exist, for instance the affine maps of the form $g(x) = \pm x + C$ for any constant $C \in \mathbb{R}$. So the precise problem is to determine whether these are the only TMG Transformations?  Note that this universal requirement is highly restrictive, suggesting that there might be other maps $g$ that act as a Minimal Graph Transformations for a particular Minimal Graph Surface $f$ but not for all of them. Hence, we ask the following:
	 	
	 	\item Find all Minimal Graph Surfaces \(f: \Omega \subseteq \mathbb{R}^2 \longrightarrow \mathbb{R}\) that admit a minimal graph transformation other than the Trivial Minimal Graph Transformations. We call such a map a \textbf{Nontrivial Minimal Graph Transformations (NMG Transformations)} of the Minimal Graph Surface \(f\). In fact, such Minimal Graph Surfaces exist. For example, the flat Minimal Graph Surface \(f(x,y)=0\) admits NMG Transformations, since for any function \(g:\mathbb{R}\longrightarrow\mathbb{R}\), the composition \(g \circ f\) becomes the constant function \(g(0)\), and a constant function always defines a Minimal Graph Surface. However, in general, \(g \circ f\) is not a Minimal Graph Surface for an arbitrary map \(g\) and a given Minimal Graph Surface \(f\). The correct question to ask is: Do all Minimal Graph Surface admit at least one NMG Transformations? Furthermore, for a  Minimal Graph Surface \(f\) that admits an NMG Transformation, find all of its NMG Transformations.
	 \end{enumerate}
	 
	 Note that, since the action of a minimal graph transformation is to transform the height function of a minimal graph surface into the height function of another minimal graph surface, a minimal graph transformation does not change the contour lines of the surface. Conversely, if two minimal surfaces share the same contour lines, then there exists a function that transforms one surface into the other by changing the height at which each contour line lies. Hence, in a certain sense, this paper is concerned with finding all families of minimal graph surfaces that share the same contour lines.  
	 
	 \subsection{Methodology}
	 The idea is first to derive a mathematical condition for a minimal graph surface \(f\) to admit a minimal graph transformation \(g\), and to determine the equation that \(g\) must satisfy; this is accomplished in Proposition~\ref{thm: Alt. def. MGT}. From this, we seek those functions \(g\) that satisfy the condition for every minimal graph surface; these are the trivial minimal graph transformations and are the subject of Theorem~\ref{thm: Trivial MGT}. In Theorem~\ref{thm: MGT of Const MGS} we observe that constant minimal graph surfaces can be transformed to another constant minimal graph surface by any function.
	 
	 Next we consider nontrivial minimal graph transformations of non‑constant minimal graph surfaces. For a non‑constant minimal graph surface to admit a NMG transformation it must obey Equation~\ref{eqn: Complex Non-Trivial MGT: 2}. Given the function \(h\) appearing in that equation, we provide an expression for the NMG transformation in terms of \(h\), valid for every minimal graph surface satisfying \ref{eqn: Complex Non-Trivial MGT: 2}. We then examine whether any minimal graph surface admits a NMG transformation with \(h=0\); it turns out that such surfaces do exist, they are precisely helicoids and planes, and the transformed minimal graph surface is again a helicoid or a plane, but with different parameters.
	 
	 For the subsequent analysis we assume \(h\) is a non‑zero function. Under this assumption we reduce system~\ref{eqn: Complex Non-Trivial MGT: 2} to the simpler system~\ref{eqn: Modified Complex Non-Trivial MGT: 2}, and prove in Theorem~\ref{thm: Equivalence thm} that the two systems are equivalent. We further reduce this system to a single complex ordinary differential equation, \ref{eqn: weakened eqn 2}, which is weaker than \ref{eqn: Modified Complex Non-Trivial MGT: 2} in the sense that it may have more solutions than the original one. The strategy is then to solve this equation explicitly and afterwards verify which of its solutions also satisfy system~\ref{eqn: Modified Complex Non-Trivial MGT: 2}. 
	 
	Three cases emerge from this equation, depending on a real parameter \(k\): \(k=0\), \(k>0\), and \(k<0\).  
	For \(k=0\) the admissible minimal graph surface is Scherk’s surface; the NMG transformation essentially changes a parameter and yields another Scherk’s surface with a different parameter.  
	For \(k>0\) we obtain the catenoid together with another surface given by Equation~\ref{eqn: soln k>0, c_0 neq 0, f}, and their NMG transformations are likewise parameter changes.  
	For \(k<0\) there are three distinct minimal graph surfaces, described precisely by expressions \ref{eqn: soln k<0, c_0 neq 0, f: case 1}, \ref{eqn: soln k<0, c_0 neq 0, f: case 2} and \ref{eqn: soln k<0, c_0 neq 0, f: case 3}. In contrast to the previous cases, the NMG transformations here do not merely alter the internal parameters; they map any one of these three surfaces onto any other of the three. One of the main tools that proves essential is the theory of elliptic integrals and elliptic functions. In particular, various identities involving Jacobi elliptic functions are repeatedly employed to obtain the complete list of solutions for this ODE.
	
	We provide a flowchart that summarises the techniques used and the results obtained in this paper. 
	 
	 \begin{figure}[h]
	 	\centering
	 	\makebox[470pt][r]{
	 		\resizebox{1.1\textwidth}{!}{
	 			\begin{tikzpicture}[
	 				% Define global styles for consistency
	 				block/.style={
	 					rectangle, 
	 					draw, 
	 					minimum width=2cm, 
	 					minimum height=1cm, 
	 					align=center, 
	 					font=\small
	 				},
	 				arrow/.style={-{Latex[scale=1.2]}, very thick},
	 				dual_arrow/.style={Latex-Latex, very thick, draw=red}
	 				]
	 				
	 				% --- Nodes ---
	 				\node (MGT) [block] {MINIMAL GRAPH \\ TRANSFORMATIONS};	
	 				
	 				\node (Constant) [block, right=5cm of MGT] {
	 					\(g\): every functions
	 				};		
	 				
	 				\node (TMGT) [block, left=4cm of MGT] {
	 					$g = \epsilon s + C$ \\ 
	 					where \(\epsilon\in\{+1,-1,0\}\)\\
	 					\footnotesize Applies to: \\ 
	 					\footnotesize All minimal surfaces
	 				};			
	 				
	 				\node (MSE) [block, below=3cm of MGT] {
	 					Minimal Surface \\ Equation \\ + \\ 
	 					$\tfrac{\Delta f}{\|\nabla f\|^2} = h(f)$
	 				};			
	 				
	 				\node (helicoids) [block, right=4cm of MSE] {
	 					Helicoids and Planes\\ \includegraphics[width=1.5cm]{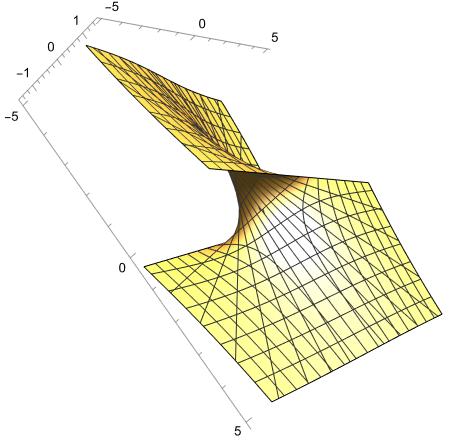}
	 				};
	 				
	 				\node (Equivalent) [block, below=2.5cm of MSE] {
	 					$\displaystyle \tfrac{u_{zz}u_{\bar{z}}^2 + u_{\bar{z}\bar{z}}u_{z}^2}{u_z u_{\bar{z}}} = j(u)$ \\
	 					and \\
	 					$\Delta u = 0$
	 				};
	 				
	 				\node (Weak) [block, below=2cm of Equivalent] {
	 					$[\ln(A'(z))]_{zz}=k[A'(z)]^2$\\\\
	 					where $k\in \mathbb{R}$
	 					
	 				};
	 				
	 				\node (k-neg) [block, below=2cm of Weak, xshift=-8cm] {
	 					$k<0$
	 				};
	 				
	 				\node (k-zero) [block, below=2cm of Weak] {
	 					$k=0$
	 				};
	 				
	 				\node (k-pos) [block, below=2cm of Weak, xshift=6cm] {
	 					$k>0$
	 					
	 				};
	 				
	 				\node (Scherk) [block, below=2cm of k-zero] {
	 					Scherk's\\
	 					Surface \\ \includegraphics[width=1.5cm]{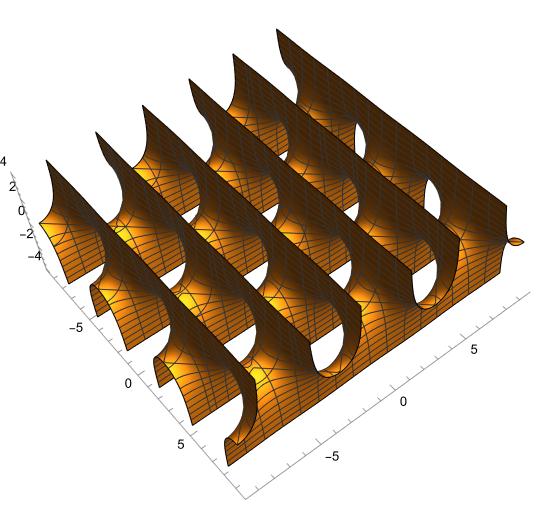}
	 				};
	 				
	 				\node (Catenoid) [block, below=5cm of k-pos, xshift=-2cm] {Catenoid \\ \includegraphics[width=1.5cm]{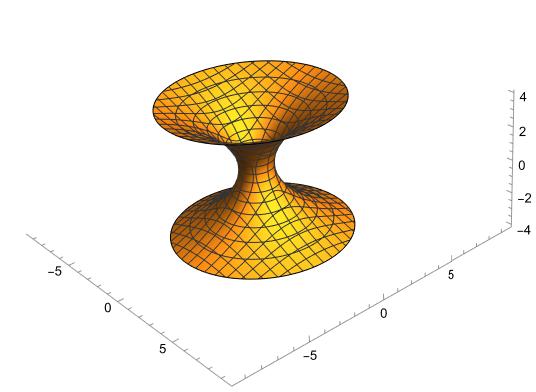}
	 				};
	 				
	 				\node (Pillars) [block, below=5cm of k-pos, xshift=5cm] {Pillars\\ \includegraphics[width=1.5cm]{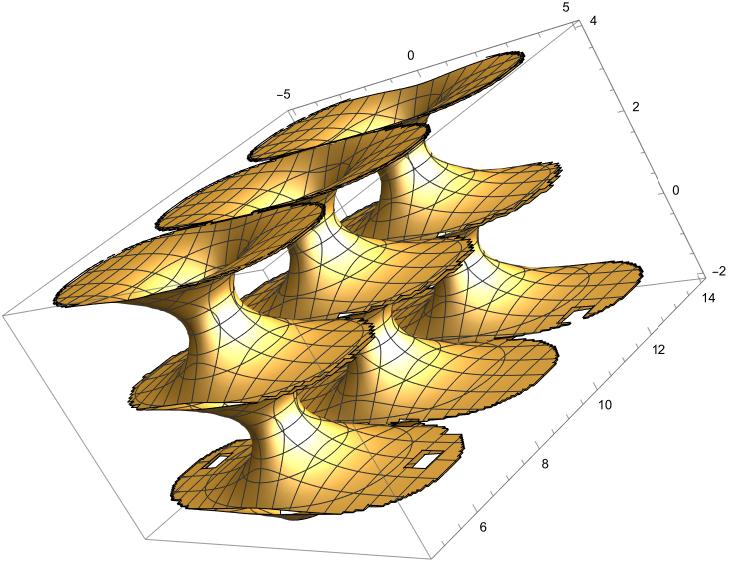}
	 				};
	 				
	 				\node (Great Wall) [block, below=8cm of k-neg, xshift=0cm] {Great Wall\\ \includegraphics[width=1.2cm]{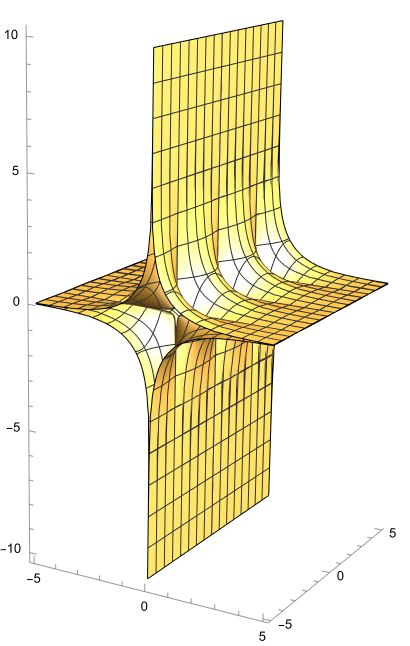}
	 				};
	 				
	 				\node (Thick Wall) [block, below=6cm of k-neg, xshift=4cm] {Thick Wall\\ \includegraphics[width=1.5cm]{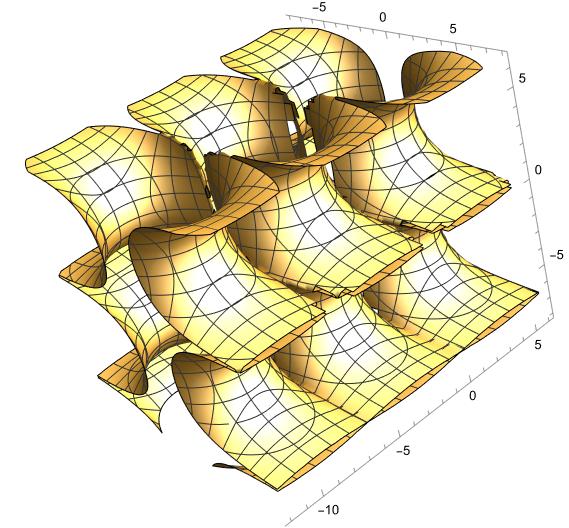} 
	 					
	 				};
	 				
	 				\node (Sharp Wall) [block, below=6cm of k-neg, xshift=-5cm] {Sharp Wall\\ \includegraphics[width=1.5cm]{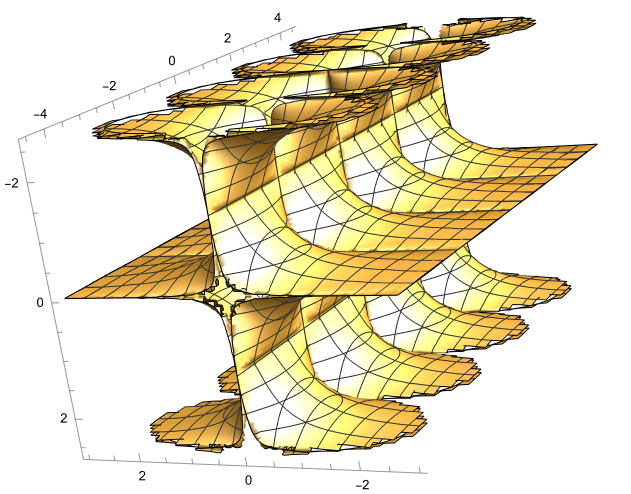}
	 				};
	 				
	 				% --- Standard Arrows ---
	 				\draw [arrow] (MGT) -- node[above, font=\small] {Trivial} (TMGT);
	 				\draw [arrow] (MGT) -- node[above, yshift=-0.6cm, align=left, font=\small] {Constant Minimal\\ Graph Surfaces} (Constant);
	 				\draw [arrow] (MGT) -- node[right, font=\small] {Non-trivial} (MSE);
	 				\draw [arrow] (MSE) -- node[above, font=\small] {$h=0$} (helicoids);
	 				\draw [arrow] (MSE) -- node[right, align=left, font=\footnotesize] {
	 					\(h\neq 0:\)\\Conversion to \\ an Equivalent \\ system
	 				}(Equivalent);
	 				\draw [arrow, draw=red] (helicoids.north) -- ++(0.5, 1.2) -- ++(3.2, 0) 
	 				node[left, midway, xshift=1.6cm, yshift=-0.8cm, align=center, font=\footnotesize, text=red] {
	 					NMG Transformation: \\ 
	 					Parameter Change
	 				} 
	 				|- (helicoids.east);
	 				\draw [arrow] (Equivalent) -- node[right, font=\small] {Weakening} (Weak);
	 				\draw [arrow] (Weak) -- node[above, font=\small] {} (k-neg);
	 				\draw [arrow] (Weak) -- node[above, font=\small] {} (k-zero);
	 				\draw [arrow] (Weak) -- node[above, font=\small] {} (k-pos);
	 				\draw [arrow] (k-pos) -- node[right, xshift=-0.1cm, align=left, font=\small] {
	 					$c_0\neq 0$ \\ \(\;\) solving \\ \(\;\;\;\;\) elliptic integral
	 				} (Pillars);
	 				\draw [arrow] (k-pos) -- node[right, font=\small] {
	 					\(c_0=0\) 
	 				} (Catenoid);
	 				\draw [arrow] (k-zero) -- node[above, font=\small] {} (Scherk);
	 				\draw [arrow, draw=red] (Catenoid.north east) -- ++(0.5, 1.2) -- ++(3.2, 0) 
	 				node[left, midway, xshift=1.6cm, yshift=-0.8cm, align=center, font=\footnotesize, text=red] {
	 					NMG Transformation: \\ 
	 					Parameter Change
	 				} 
	 				|- (Catenoid.east);
	 				\draw [arrow, draw=red] (Pillars.north) -- ++(0.5, 1.2) -- ++(3.2, 0) 
	 				node[left, midway, xshift=1.6cm, yshift=-0.8cm, align=center, font=\footnotesize, text=red] {
	 					NMG Transformation: \\ 
	 					Parameter Change
	 				} 
	 				|- (Pillars.east);
	 				\draw [arrow, draw=red] (Scherk.north) -- ++(0.7, 1.1) -- ++(3.2, 0) 
	 				node[left, midway, xshift=1.6cm, yshift=-0.8cm, align=center, font=\footnotesize, text=red] {
	 					NMG Transformation: \\ 
	 					Parameter Change
	 				} 
	 				|- (Scherk.east);
	 				\draw [arrow] (k-neg) -- node[right, xshift=-0.1cm, yshift=-0.6cm, align=left, font=\small] {$\frac{2|c_0|}{|k|}-C_1=0$ \\ \(\;\)solving \\\(\;\)elliptic integral} (Great Wall);
	 				\draw [arrow] (k-neg) -- node[right, xshift=-0.3cm, align=left,  yshift=0cm, font=\small] {$\frac{2|c_0|}{|k|}-C_1>0$ \\ \(\;\;\) solving \\\(\;\;\;\;\;\;\)elliptic integral} (Thick Wall);
	 				\draw [arrow] (k-neg) -- node[left, xshift=0.3cm, align=right,  yshift=0cm, font=\small] {$\frac{2|c_0|}{|k|}-C_1<0$ \\ solving\(\;\;\;\;\) \\elliptic integral \(\;\;\;\;\;\;\)} (Sharp Wall);
	 				\draw [dual_arrow] (Sharp Wall) -- node[above, align=center, xshift=1.3cm, font=\small, text=red] {
	 					NMG Transformation
	 				} (Thick Wall);
	 				\draw [dual_arrow] (Thick Wall) -- node[right, align=center, yshift=-0.3cm, font=\small, text=red] {
	 					NMG Transformation
	 				} (Great Wall);
	 				\draw [dual_arrow] (Sharp Wall) -- node[left, align=center, yshift=-0.3cm, font=\small, text=red] {
	 					NMG Transformation
	 				} (Great Wall);
	 				\draw [arrow, draw=red] (Sharp Wall.north) -- ++(-0.7, 1.1) -- ++(-3.2, 0) 
	 				node[left, midway, xshift=1.8cm, yshift=-0.8cm, align=center, font=\footnotesize, text=red] {
	 					NMG Transformation: \\ 
	 					Parameter Change
	 				} 
	 				|- (Sharp Wall.west);
	 				\draw [arrow, draw=red] (Thick Wall.north) -- ++(0.7, 1.1) -- ++(3.2, 0) 
	 				node[left, midway, xshift=1.6cm, yshift=-0.8cm, align=center, font=\footnotesize, text=red] {
	 					NMG Transformation: \\ 
	 					Parameter Change
	 				} 
	 				|- (Thick Wall.east);
	 			\end{tikzpicture}
	 	}}
	 	\caption{Flow chart illustrating the logical progression and structural organization of this paper.}
	 	\label{pic: flow chart}
	 \end{figure}	 
	 
	 In the final sections we provide a brief discussion of the singular‑point analysis of several minimal surfaces constructed using the techniques developed in this paper. A section on immediate applications of these techniques is also included. Two appendices have been added: one on analytic functions and their properties (Appendix: \ref{section: Appendix analytic function}), many of whose theorems have been used consistently throughout the paper; and a second appendix on elliptic integrals and Jacobi elliptic functions (Appendix: \ref{section: Elliptic funnctions}), an important class of functions whose identities and properties have also been employed consistently in the paper.
	
	\section{Trivial Minimal Graph Transformation}
    In this section we will find and explicitly characterise all the TMG Transformations. Before doing that lets give a useful alternative definition for Minimal Graph Transformation in the form of the following Proposition.
    
    \begin{proposition}
    	\label{thm: Alt. def. MGT}
        Let \(g: U \subseteq \mathbb{R} \longrightarrow \mathbb{R}\) be a smooth map and \(f: \Omega \longrightarrow \mathbb{R} \) be a Minimal Graph Surface such that \(f(\Omega) \subseteq U\), then
        \(g\) is the Minimal Graph Transformation of the Minimal Graph Surface \(f\) if and only if they obeys the equation:
        \begin{equation}
        	\label{eqn: Alt. def. MGT}
        	g\prime\prime(f)\cdot \|\nabla f\|^2= [{g\prime}^3-g\prime](f) \cdot \Delta f
        \end{equation}
    \end{proposition}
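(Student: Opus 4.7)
The plan is a direct calculation: set \(F = g \circ f\), impose the minimal surface equation \ref{eqn: MSE} on \(F\), and exploit the fact that the same equation is already satisfied by \(f\). First, by the chain rule,
\[
F_x = g'(f)\,f_x, \qquad F_y = g'(f)\,f_y,
\]
\[
F_{xx} = g''(f)\,f_x^{2} + g'(f)\,f_{xx}, \qquad F_{yy} = g''(f)\,f_y^{2} + g'(f)\,f_{yy}, \qquad F_{xy} = g''(f)\,f_x f_y + g'(f)\,f_{xy}.
\]

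Next I would substitute these expressions into \((1+F_y^{2})F_{xx} - 2 F_{xy} F_x F_y + (1+F_x^{2})F_{yy}\) and sort the resulting monomials by the powers of \(g'(f)\) and \(g''(f)\) that appear. The expression splits into four natural pieces: a piece with \(g''(f)\) alone (which collapses to \(g''(f)\,\|\nabla f\|^{2}\)), a piece with \(g'(f)\) alone (which contributes \(g'(f)\,\Delta f\)), a piece with \(g'(f)^{3}\) (which contributes \(g'(f)^{3}\bigl[f_y^{2}f_{xx} - 2 f_x f_y f_{xy} + f_x^{2}f_{yy}\bigr]\)), and a piece with \(g'(f)^{2}g''(f)\). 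The one non‑routine observation is that the last piece consists of three occurrences of \(g'(f)^{2}g''(f)\,f_x^{2}f_y^{2}\) with coefficients \(+1,-2,+1\), and these sum to zero, so the entire \(g'^{2}g''\) block cancels out.

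After this cancellation the left‑hand side of the minimal surface equation for \(F\) equals
\[
g''(f)\,\|\nabla f\|^{2} + g'(f)\,\Delta f + g'(f)^{3}\bigl(f_y^{2}f_{xx} - 2 f_x f_y f_{xy} + f_x^{2}f_{yy}\bigr).
\]
At this point I would invoke \ref{eqn: MSE} for \(f\), rewritten in the rearranged form
\[
f_y^{2}f_{xx} - 2 f_x f_y f_{xy} + f_x^{2}f_{yy} = -\Delta f,
\]
which converts the displayed expression into \(g''(f)\,\|\nabla f\|^{2} - \bigl[g'(f)^{3} - g'(f)\bigr]\Delta f\). Since \(g \circ f\) is a minimal graph surface precisely when this quantity vanishes identically on \(\Omega\), and since every step above is a pointwise algebraic identity, the equivalence with \ref{eqn: Alt. def. MGT} follows in both directions. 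The main obstacle is really only careful bookkeeping of roughly a dozen terms; the single conceptual point is the vanishing of the \(g'^{2}g''\) cross terms, which is what makes a clean reformulation in terms of only \(\|\nabla f\|^{2}\) and \(\Delta f\) possible.
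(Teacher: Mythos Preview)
Your proof is correct and follows essentially the same approach as the paper's: substitute \(F=g\circ f\) into the minimal surface equation, expand via the chain rule, and then use the minimal surface equation for \(f\) to collapse the cubic block. The only difference is cosmetic---the paper phrases the last step as ``add \(g'^{3}\Delta f\) on both sides'' whereas you rearrange the minimal surface equation for \(f\) to read \(f_y^{2}f_{xx}-2f_xf_yf_{xy}+f_x^{2}f_{yy}=-\Delta f\); these are the same manipulation, and your account of the \(g'^{2}g''\) cancellation is in fact more explicit than the paper's.
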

    \begin{proof}
        We know by definition that \(g: U\longrightarrow\mathbb{R}\) is a Minimal Graph Transformation and \(f: \Omega \longrightarrow \mathbb{R} \) is a Minimal Graph Surface if and only if \(h = g \circ f : \Omega \longrightarrow \mathbb{R}\) is a Minimal graph Surface.  So \(h\) must satisfy the Minimal Surface Equation:
        \[
        (1 + h_y^2) h_{xx} - 2 h_{xy} h_x h_y + (1 + h_x^2) h_{yy} = 0.
        \]  
        Now substituting the fact that \(h=g\circ f\), gives the following equation:
        \[
        (1 + g\prime^2 f_x^2)(g\prime\prime f_y^2 + g\prime f_{yy}) - 2 g\prime^2 f_x f_y (g\prime\prime f_x f_y + g\prime f_{xy}) + (1 + g\prime^2 f_y^2)(g\prime\prime f_x^2 + g\prime f_{xx}) = 0
        \]
        Expand and simplify this, then add \(g\prime^3\Delta f\) on both sides, simplify again using the fact that \(f\) obeys Minimal Graph Surface Equation, we get:
        \[
        g\prime\prime(f)\cdot\|\nabla f\|^2= (g\prime^3-g\prime)(f)\cdot\Delta f
        \]
        This proves the Proposition.  
    \end{proof}

    Now suppose that \(g:\mathbb{R}\longrightarrow\mathbb{R}\) is a TMG Transformation. By the Proposition \ref{thm: Alt. def. MGT}, this is equivalent to the statement that for every Minimal Graph Surface \(f\) the Equation \ref{eqn: Alt. def. MGT} holds. 
    
    If \(g\) is a function such that \(g\prime^3-g\prime \equiv 0\) then either \(g\prime \equiv 0\) or \(g\prime \equiv 1\) or \(g\prime \equiv -1\), in all three cases \(g\prime\prime \equiv 0 \), hence for all the Minimal Graph Surface \(f\) the Equation \ref{eqn: Alt. def. MGT} holds. So by Proposition \ref{thm: Alt. def. MGT} every \(g\) given by one of the following forms :
    \begin{align}
    	&g\prime \equiv 0  &\text{or}\,\,\, &g\equiv C \,\, &\text{for some constant}\, C\\
    	&g\prime \equiv 1  &\text{or}\,\,\, &g(t)=t+C \,\,  &\text{for some constant}\, C\\
    	&g\prime \equiv -1 &\text{or}\,\,\, &g(t)=-t+C \,\, &\text{for some constant}\, C
    \end{align}
	is a TMG Transformation.\\
	
	Infact it is also true that these are the only TMG Transformations. We will prove this by contradiction, suppose that \(g\) be a TMG Transformation such that \(g\prime^3-g\prime \neq 0\). Since \(g\) is continuous, there exist a domain \(\tilde U\subseteq \mathbb{R}\) such that \(g\prime^3-g\prime \not\equiv 0\) in that domain. Consider the Scherk surface given by \(\tilde f(x,y)=\ln\big(\tfrac{cos(x)}{cos(y)}\big)\), it is an unbounded surface and its range is \(\mathbb{R}\), hence contains \(\tilde U\), let \(\tilde \Omega\subseteq\mathbb{R}^2\) be such that \(\tilde f(\tilde \Omega)\subseteq\tilde U\). We can safely assume that in \(\tilde \Omega\), \(\|\nabla \tilde f\|^2 \not\equiv 0\) (for that shrink \(\tilde \Omega\) if necessary). These facts along with the fact that \(g\) is a TMG Transformation gives that \(\tfrac{g\prime\prime(\tilde f)}{[g\prime^3-g\prime](\tilde f)}=\tfrac{\Delta \tilde f}{\|\nabla \tilde f\|^2}\) in this domain \(\tilde \Omega\). This implies that \(\tfrac{\Delta \tilde f}{\|\nabla \tilde f\|^2}\) is a function of \(\tilde f\), call it \(h(\tilde f)=\tfrac{g\prime\prime(\tilde f)}{[g\prime^3-g\prime](\tilde f)}\). Hence for the Scherk surface \(\tilde f\) we have:  \(\tfrac{\Delta \tilde f}{\|\nabla \tilde f\|^2}=h(\tilde f)\). We differentiate this equation with respect to \(x\) and multiply \(f_y\) in both LHS and RHS, Similarly we also differentiate this equation with respect to \(y\) and multiply \(f_x\) in both LHS and RHS, we get the following:
	\begin{equation}
		\Big[\tfrac{\Delta \tilde f}{\|\nabla \tilde f\|^2}\Big]_x.\tilde f_y=h'(\tilde f) \tilde f_x \tilde f_y=\Big[\tfrac{\Delta \tilde f}{\|\nabla \tilde f\|^2}\Big]_y.\tilde f_x
	\end{equation}
	But we know that for the Scherk surface the equality \(\Big[\tfrac{\Delta \tilde f}{\|\nabla \tilde f\|^2}\Big]_x.\tilde f_y=\Big[\tfrac{\Delta \tilde f}{\|\nabla \tilde f\|^2}\Big]_y.\tilde f_x\) doesn't holds.\\
	Contradition! It is due to our assumption that a TMG Transformation \(g\) exist with \(g\prime^3-g\prime \neq 0\). Hence we have the following Theorem: 
	\begin{theorem}
		\label{thm: Trivial MGT}
		The only smooth maps \(g:\mathbb{R}\longrightarrow\mathbb{R}\) which are also Trivial Minimal Graph Transformations are precisely the maps of the form: \(g(t)\equiv C\), \(g(t)=t+C\) and \(g(t)=-t+C\) for some arbitrary constant \(C\)
	\end{theorem}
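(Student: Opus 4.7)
The plan is to use Proposition~\ref{thm: Alt. def. MGT} as the operational definition: $g$ is a TMG transformation precisely when Equation~\ref{eqn: Alt. def. MGT} holds for \emph{every} minimal graph surface $f$ with $f(\Omega)$ inside the domain of $g$. The strategy splits into an easy sufficiency direction and a harder necessity direction, and its success hinges on producing a single concrete minimal graph surface that is ``rich enough'' to refute any alternative.

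For sufficiency, if $g \equiv C$, $g(t) = t + C$, or $g(t) = -t + C$, then $g''$ vanishes identically and $g'^3 - g' \equiv 0$, so both sides of Equation~\ref{eqn: Alt. def. MGT} are zero on any minimal graph surface $f$. Thus the three listed forms are all TMG transformations, and only the converse requires work.

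For necessity I would argue by contradiction. Suppose $g$ is a TMG transformation not of the three listed forms. Then $g'^3 - g'$ cannot vanish identically on $\mathbb{R}$: if it did, continuity would force $g'$ to be constantly $0$, $1$, or $-1$, putting $g$ into one of the three families. Hence there is an open interval $\tilde U \subseteq \mathbb{R}$ on which $g'^3 - g'$ is nonzero. On any minimal graph surface $f$ with $f(\Omega) \subseteq \tilde U$ and $\|\nabla f\|^2 \neq 0$, dividing in Equation~\ref{eqn: Alt. def. MGT} yields
\[
\frac{\Delta f}{\|\nabla f\|^2} \;=\; h(f), \qquad h \;:=\; \frac{g''}{g'^3 - g'}.
\]
This is a strong structural constraint: the quantity $\Delta f/\|\nabla f\|^2$ must be constant along the level sets of $f$. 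Differentiating in $x$ and in $y$ and eliminating the unknown $h'(f)$ via cross multiplication produces the integrability condition
\[
\left[\frac{\Delta f}{\|\nabla f\|^2}\right]_{x} f_y \;=\; \left[\frac{\Delta f}{\|\nabla f\|^2}\right]_{y} f_x ,
\]
which every minimal graph surface with range in $\tilde U$ is now obliged to satisfy.

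The main obstacle — and essentially the only nontrivial calculation — is exhibiting an explicit minimal graph surface that \emph{violates} this integrability condition, thereby producing the contradiction. The natural candidate is Scherk's first surface $\tilde f(x,y) = \ln\bigl(\cos x/\cos y\bigr)$: its image is all of $\mathbb{R}$, so after possibly shrinking the domain we may arrange $\tilde f(\tilde\Omega) \subseteq \tilde U$ and $\|\nabla \tilde f\|^2$ nowhere zero on $\tilde\Omega$. A direct computation of $\Delta \tilde f/\|\nabla \tilde f\|^2$ together with its partial derivatives, paired against $\tilde f_x$ and $\tilde f_y$, then shows that the integrability condition fails on $\tilde\Omega$. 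This contradicts the assumption that $g$ is a TMG transformation with $g'^3-g' \not\equiv 0$, forcing the trichotomy $g' \in \{0,1,-1\}$ and completing the classification.
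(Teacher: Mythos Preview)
Your proposal is correct and follows essentially the same route as the paper: sufficiency via $g''\equiv 0$ and $g'^3-g'\equiv 0$, necessity by contradiction using Proposition~\ref{thm: Alt. def. MGT} to obtain $\Delta f/\|\nabla f\|^2 = h(f)$, deriving the cross-derivative integrability condition, and then exhibiting Scherk's first surface as the concrete minimal graph that violates it. The paper likewise asserts (without explicit computation) that Scherk fails the integrability identity, so your level of detail matches the original.
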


    \section{Constant Minimal Graph Surfaces}
    In this section we give some properties of Constant Minimal Graph Surfaces and give their Minimal Graph Transformations.
    
    First, suppose that, \(f: \Omega \longrightarrow \mathbb{R}\) be the Constant Minimal Graph Surface \(f\equiv C\). Then for every smooth map \(g: U \subseteq \mathbb{R} \longrightarrow \mathbb{R}\) with  \(f(\Omega)\subseteq U\), we have \(g \circ f: \Omega \longrightarrow \mathbb{R}\) is also a constant function given by \(g \circ f\equiv g(C)\). Hence we have the following Proposition.
	\begin{proposition}
		\label{thm: MGT of Const MGS}
	    Let \(f: \Omega \longrightarrow \mathbb{R}\) be the Constant Minimal Graph Surface, then every smooth map \(g: U \subseteq \mathbb{R} \longrightarrow \mathbb{R}\) with  \(f(\Omega)\subseteq U\) is a  Minimal Graph Transformations of the Constant Minimal Graph Surface \(f\) 	
	\end{proposition}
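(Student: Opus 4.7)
The plan is to verify directly from the definition that $g\circ f$ is a Minimal Graph Surface whenever $f$ is constant, by checking that the composition is itself constant and that any constant function satisfies the Minimal Surface Equation~\ref{eqn: MSE}. Since the proposition is a pure specialization of the definition of Minimal Graph Transformation to the case $f\equiv C$, no deep machinery (not even Proposition~\ref{thm: Alt. def. MGT}) will be needed.

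First I would observe that, given $f:\Omega\longrightarrow\mathbb{R}$ with $f\equiv C$ and any smooth $g:U\longrightarrow\mathbb{R}$ with $f(\Omega)=\{C\}\subseteq U$, the composition is well defined and satisfies $(g\circ f)(x,y)=g(C)$ for every $(x,y)\in\Omega$. Thus $g\circ f$ is itself a constant function on $\Omega$. Next I would compute the partial derivatives of $g\circ f$: since $g\circ f$ is constant, all first and second partial derivatives vanish identically, i.e.\ $(g\circ f)_x=(g\circ f)_y=(g\circ f)_{xx}=(g\circ f)_{xy}=(g\circ f)_{yy}\equiv 0$.

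Substituting these into the Minimal Surface Equation~\ref{eqn: MSE}, every term vanishes trivially, so $g\circ f$ satisfies the Minimal Surface Equation on $\Omega$ and therefore defines a Minimal Graph Surface. By definition this means $g$ is a Minimal Graph Transformation of $f$, which is exactly the claim of Proposition~\ref{thm: MGT of Const MGS}.

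There is really no hard step here; the only subtlety worth flagging explicitly in the write-up is to make sure the domain hypothesis $f(\Omega)\subseteq U$ is invoked so that $g\circ f$ is defined on all of $\Omega$, and to note that the conclusion holds without any regularity assumption on $g$ beyond smoothness, since the proof never differentiates $g$ at all.
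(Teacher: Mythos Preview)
Your proof is correct and follows essentially the same approach as the paper: the paper simply observes that since $f\equiv C$, the composition $g\circ f\equiv g(C)$ is again a constant function, hence a Minimal Graph Surface. Your write-up is just a bit more explicit in verifying that a constant function satisfies the Minimal Surface Equation~\ref{eqn: MSE}, but the argument is the same.
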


    We gave all the Minimal Graph Transformation of the Constant Minimal Graph Surface. A natural question to ask is: What about the Non-Constant Minimal Graph Surfaces with 'significant part' of it being Constant? Does there exist such a Minimal Surface? More formally, what we are asking is: Does there exist a Non-Constant Minimal Graph Surface \(f: \Omega \longrightarrow \mathbb{R}\) with a open subset \(V\subseteq\Omega\) such that \(f\big|_V\equiv C\) for some constant \(C\). If such Minimal Graph Surfaces exists, what are the Minimal Graph Transformations for such surfaces. The Answer is that such Minimal Graph Surfaces donot exist. We state and prove this fact in the following Proposition.

   \begin{proposition}
   	   \label{thm: MGS is non-constant everywhere}
	   Let \( f : \Omega \longrightarrow \mathbb{R} \) be a minimal graph surface, and let \( \Omega_0 \subseteq \Omega \) be a nonempty open subset such that \( f|_{\Omega_0} \equiv C \) for some constant \( C \in \mathbb{R} \). Then \( f \equiv C \) on the entire domain \( \Omega \).
   \end{proposition}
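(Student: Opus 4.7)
The plan is to exploit the fact that every smooth solution of the minimal surface equation is automatically real-analytic, and then apply the identity principle for real-analytic functions on the connected open set \(\Omega\). First I would reduce to \(C=0\) by passing to \(\tilde f := f-C\); since Equation~\ref{eqn: MSE} involves only the derivatives of \(f\), the shifted function \(\tilde f\) is again a minimal graph surface, and the hypothesis becomes \(\tilde f|_{\Omega_0}\equiv 0\).

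Next I would invoke analyticity. The minimal surface equation~\ref{eqn: MSE} is a quasilinear second-order equation whose coefficients depend polynomially, hence real-analytically, on the first derivatives of \(f\); moreover the associated quadratic form \((1+f_y^2)\xi_1^2 - 2 f_x f_y\,\xi_1\xi_2 + (1+f_x^2)\xi_2^2\) is positive definite for every value of \((f_x,f_y)\), so ellipticity is never lost. By the classical interior-analyticity theorem of Hopf, subsumed later by Morrey's regularity theorem for elliptic equations with real-analytic structure, every \(C^2\) solution of such an equation is real-analytic on its domain. Hence \(\tilde f\) is real-analytic on \(\Omega\).

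Finally I would run the standard clopen argument. Let \(A:=\{p\in\Omega : \tilde f\equiv 0\text{ on some neighbourhood of }p\}\). By hypothesis \(\Omega_0\subseteq A\), so \(A\neq\emptyset\), and \(A\) is open by construction. If \(p\in\Omega\) is a limit point of \(A\), then continuity of every partial derivative of \(\tilde f\) forces all such derivatives at \(p\) to vanish, so the Taylor series of \(\tilde f\) at \(p\) is identically zero; real-analyticity then forces \(\tilde f\) to vanish on an entire neighbourhood of \(p\), giving \(p\in A\). Hence \(A\) is also closed in \(\Omega\), and since \(\Omega\) is connected the only possibility is \(A=\Omega\), i.e.\ \(f\equiv C\) throughout \(\Omega\).

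The one genuine obstacle is locating and citing the analyticity statement cleanly: the result is classical, but the underlying regularity theory is non-trivial and should either be quoted from a standard reference or proved via a short bootstrap using the fact that the linearisation around \(\tilde f\equiv 0\) is simply the Laplacian. A purely PDE-theoretic alternative would be to linearise and apply the strong unique continuation property of second-order uniformly elliptic operators directly to \(\tilde f\), but this is substantially heavier machinery and less transparent than the analyticity route sketched above.
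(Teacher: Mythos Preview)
Your proof is correct and takes a somewhat different route from the paper. The paper does not invoke real-analyticity of the solution directly here; instead it takes a boundary point of the maximal open set on which \(f\equiv C\), passes to a local isothermal parametrization \((\phi_1,\phi_2,\phi_3)\) of the minimal surface near that point, and uses that the height coordinate \(\phi_3\) is harmonic, hence the real part of a holomorphic function, so the identity theorem for holomorphic functions forces \(\phi_3\equiv C\) on the whole isothermal neighbourhood, contradicting maximality. Your argument bypasses the isothermal chart entirely by appealing to the interior real-analyticity of solutions of the minimal surface equation and then running the standard clopen/identity-principle argument for real-analytic functions. In fact the paper records exactly this analyticity result in its appendix (Theorem~\ref{thm: MGS is real analytic}) and uses your style of argument in the very next proposition (Proposition~\ref{thm: MGS is non-constant almost everywhere}), so your approach is arguably more in line with the paper's overall machinery and slightly more economical; the paper's isothermal-coordinate proof buys a self-contained argument that does not need to cite elliptic regularity, at the cost of invoking the existence of isothermal charts and harmonic coordinate functions.
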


   \begin{proof}
	  Define the open subset \( \tilde{\Omega_0} \subseteq \Omega \) by
	  \[
	    \tilde{\Omega_0} = \bigcup \left\{ \Omega_{\alpha} \subseteq \Omega \;\middle|\; \Omega_0 \subseteq \Omega_{\alpha},\; \Omega_{\alpha} \text{ is open, and } f|_{\Omega_{\alpha}} \equiv C \right\}.
	  \]
	  By construction, \( \tilde{\Omega_0} \) is the largest open subset of \( \Omega \) on which \( f \) is identically equal to \( C \).
	
	  Consider the boundary \( \delta \tilde{\Omega_0} \) of \( \tilde{\Omega_0} \) relative to \( \Omega \). If \( \delta \tilde{\Omega_0} = \varnothing \), then \( \tilde{\Omega_0} = \Omega \), since \( \Omega \) is connected. Hence \( f \equiv C \) on \( \Omega \), and the result follows.
	
      Now assume, for the sake of contradiction, that \( \delta \tilde{\Omega_0} \neq \varnothing \). Then there exists a point \( (x_0, y_0) \in \delta \tilde{\Omega_0} \subseteq \Omega \). Since the surface \( (x, y, f(x, y)) \) is minimal, it admits a local isothermal parametrization
	  \[
	    (\phi_1(u,v), \phi_2(u,v), \phi_3(u,v)) : V_0 \longrightarrow \mathbb{R}^3
	  \]
	  in a neighborhood \( V_0 \) of the point  \( (x_0, y_0, f(x_0, y_0)) \), where each \( \phi_i \) is a harmonic function (see \cite{Os}, Lemma 4.2 and Lemma 4.4).

	  Because $(x_0, y_0)$ lies on the boundary of $\tilde{\Omega_0}$ and $f \equiv C$ on $\tilde{\Omega_0}$, the function $\phi_3$ attains the constant value $C$ on an open set inside $V_0$. Since $\phi_3$ is harmonic, the function $\Phi_3 = \phi_3 + i\tilde{\phi}_3$ is holomorphic in $V_0$, where $\tilde{\phi}_3$ is the harmonic conjugate of $\phi_3$. The holomorphic map $\Phi_3$ equals $C + i\tilde{\phi}_3$ on an open subset of $V_0$; by the identity theorem, this equality holds throughout $V_0$. Consequently, $\phi_3$ is constant throughout the neighborhood $V_0$.
	
	  Hence \( f \) is locally constant around \( (x_0, y_0) \), which contradicts the maximality of \( \tilde{\Omega_0} \). Therefore, our assumption that \( \delta \tilde{\Omega_0} \neq \varnothing \) must be false. It follows that \( \tilde{\Omega_0} = \Omega \), and consequently \( f \equiv C \) on \( \Omega \).
  \end{proof}

   We now pose a question stronger than the previous one: Does there exist a non-constant minimal graph surface \(f: \Omega \longrightarrow \mathbb{R}\) and a closed subset \(V \subseteq \Omega\) of nonzero measure such that \(f\big|_V \equiv C\) for some constant \(C\)? The answer is that such minimal graph surfaces do not exist. We state and prove this fact in the following proposition.
  
  \begin{proposition}
      \label{thm: MGS is non-constant almost everywhere}
      Let \( f : \Omega \longrightarrow \mathbb{R} \) be a minimal graph surface, and let \( V \subseteq \Omega \) be a nonempty closed subset of nonzero measure such that \( f|_V \equiv C \) for some constant \( C \in \mathbb{R} \). Then \( f \equiv C \) on the entire domain \( \Omega \).
  \end{proposition}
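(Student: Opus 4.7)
The plan is to reduce this statement to Proposition~\ref{thm: MGS is non-constant everywhere} by showing that the hypothesis forces $f \equiv C$ on some nonempty open subset of $\Omega$. Since $V$ has positive Lebesgue measure, the Lebesgue density theorem supplies a density point $(x_0,y_0) \in V$: every sufficiently small ball around $(x_0,y_0)$ meets $V$ in a set of positive measure. Around $(x_0,y_0,C)$ on the graph of $f$ I would take, exactly as in the proof of Proposition~\ref{thm: MGS is non-constant everywhere}, a local isothermal parametrization $(\phi_1,\phi_2,\phi_3):V_0\subseteq\mathbb{R}^2\to\mathbb{R}^3$ whose components are harmonic.

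Because $f$ is a graph surface, its tangent plane is nowhere vertical, so after shrinking $V_0$ the planar projection $(u,v)\mapsto(\phi_1(u,v),\phi_2(u,v))$ is a diffeomorphism onto an open neighbourhood $U_0\subseteq\Omega$ of $(x_0,y_0)$. The set $V\cap U_0$ has positive measure by the density point property, and pulling it back through this diffeomorphism yields a set $W\subseteq V_0$ of positive measure on which $\phi_3\equiv C$. Now $\phi_3 - C$ is harmonic on $V_0$, hence real analytic, so I would invoke the standard fact that a real analytic function on a connected open subset of $\mathbb{R}^n$ which is not identically zero has zero set of Lebesgue measure zero (its zero locus being a proper real analytic subvariety of codimension at least one). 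Applied to $\phi_3 - C$, which vanishes on the positive-measure set $W$, this forces $\phi_3\equiv C$ throughout $V_0$. Transporting the conclusion back through the diffeomorphism shows $f\equiv C$ on the open set $U_0$, and then Proposition~\ref{thm: MGS is non-constant everywhere} gives $f\equiv C$ on all of $\Omega$.

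The main obstacle is precisely the analytic step, namely the claim that a harmonic function vanishing on a set of positive measure must vanish identically. This rests on the real analyticity of harmonic functions together with the structural result on zero sets of real analytic functions (Weierstrass preparation plus induction on dimension); the measure-theoretic and differential-geometric steps sandwiching it are routine. An alternative route would bypass the $n$-dimensional analytic geometry by working with the holomorphic Wirtinger derivative $\partial_z\phi_3$ and showing that at a density point of $W$ every directional derivative of $\phi_3$ vanishes, so that $\partial_z\phi_3$ has a zero with an accumulation point and hence vanishes identically on $V_0$ by the one-variable identity theorem; this substitutes complex-analytic machinery for the real analytic structure theorem, but the essential difficulty is the same.
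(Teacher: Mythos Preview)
Your proof is correct, but it takes a more circuitous route than the paper's. The paper observes directly that solutions of the minimal surface equation are real analytic on $\Omega$ (Theorem~\ref{thm: MGS is real analytic}), so $f-C$ is itself a real analytic function vanishing on the positive-measure set $V$; Theorem~\ref{thm: zero set of real analytic fns} then gives $f\equiv C$ on all of $\Omega$ in one stroke, with no need for density points, isothermal charts, or the reduction to Proposition~\ref{thm: MGS is non-constant everywhere}. Your argument is not wrong, but the detour through the harmonic coordinate $\phi_3$ effectively re-derives the analyticity of $f$ locally (harmonic $\Rightarrow$ real analytic) rather than invoking it globally for $f$ itself. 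Both proofs ultimately hinge on the same structural fact about zero sets of real analytic functions; the paper just applies it to $f-C$ directly, which avoids the diffeomorphism bookkeeping and the appeal to the earlier proposition.
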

  
  \begin{proof}
  	 It is a well known fact that a minimal graph surface must be real analytic on its domain (see Appendix~\ref{section: Appendix analytic function}, Theorem~\ref{thm: MGS is real analytic}). Consequently, \(f - C\) is also a real analytic function. By the hypothesis of this proposition, this function vanishes on a set \(V\) of non‑zero measure. Therefore, by the fact that a real analytic function must have zero set of measure zero (see Appendix \ref{section: Appendix analytic function}, Theorem~\ref{thm: zero set of real analytic fns}), it follows that \(f - C \equiv 0\), or equivalently, \(f \equiv C\) on \(\Omega\).
  \end{proof}
  
  We now prove a proposition that is crucial for justifying Choice of Domain \ref{section: assumptions}, which will be introduced later in this section. Specifically, we show that for a nonconstant minimal graph surface \(f\), the function \(\|\nabla f\|^2\) is nonzero almost everywhere in its domain.
  
  \begin{proposition}
  	\label{thm: f_x^2+f_y^2 is nonzero almost everywhere}
  	Let \( f : \Omega \longrightarrow \mathbb{R} \) be a nonconstant minimal graph surface. Then \(\|\nabla f\|^2\) vanishes on a set of measure zero.
  \end{proposition}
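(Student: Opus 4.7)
The plan is to exploit real analyticity of minimal graph surfaces, exactly as in the proof of Proposition~\ref{thm: MGS is non-constant almost everywhere}, and reduce the statement to the theorem on zero sets of real analytic functions cited from the Appendix.

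First I would invoke Theorem~\ref{thm: MGS is real analytic} to conclude that $f$ is real analytic on $\Omega$. Consequently the partial derivatives $f_x$ and $f_y$ are real analytic, and therefore so is the function
\[
F(x,y) := \|\nabla f(x,y)\|^2 = f_x(x,y)^2 + f_y(x,y)^2,
\]
as a finite sum of products of real analytic functions.

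Next I would argue by contradiction: suppose the zero set $Z = \{(x,y)\in\Omega : F(x,y) = 0\}$ has positive Lebesgue measure. Then $F$ is a real analytic function on the connected open set $\Omega$ whose zero set has positive measure, so by Theorem~\ref{thm: zero set of real analytic fns} we must have $F \equiv 0$ on $\Omega$. Since $F = f_x^2 + f_y^2$ is a sum of two real squares, $F \equiv 0$ forces $f_x \equiv 0$ and $f_y \equiv 0$ on $\Omega$. Because $\Omega$ is connected, this implies $f$ is constant on $\Omega$, contradicting the assumption that $f$ is nonconstant. Hence $Z$ has Lebesgue measure zero, which is the desired conclusion.

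There is no serious obstacle here: the entire argument is a direct application of the analyticity of minimal graph surfaces and the measure‑zero property of the zero set of a nonzero real analytic function, both of which are invoked from the Appendix and have already been used in the preceding proposition. The only point worth emphasizing is that $\|\nabla f\|^2$ is itself real analytic (not merely the intersection of two analytic zero sets), which lets us apply the zero‑set theorem to a single scalar function and then read off the contradiction from the pointwise identity $f_x^2 + f_y^2 = 0 \Rightarrow f_x = f_y = 0$ valid for real-valued functions.
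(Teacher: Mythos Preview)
Your proof is correct and follows essentially the same approach as the paper: both use that $f$ is real analytic (Theorem~\ref{thm: MGS is real analytic}), deduce that $\|\nabla f\|^2 = f_x^2 + f_y^2$ is real analytic, and then apply Theorem~\ref{thm: zero set of real analytic fns}. The only cosmetic difference is that you phrase the step ``$f$ nonconstant $\Rightarrow \|\nabla f\|^2 \not\equiv 0$'' as a contradiction argument, whereas the paper states it directly.
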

  
  \begin{proof}
  	Since \(f\) is a real analytic function, it follows that \(\|\nabla f\|^2 = f_x^2 + f_y^2\) is also real analytic (see Appendix: \ref{section: Appendix analytic function}, Theorems \ref{thm: algebra of real analytic functions} and \ref{thm: calculus of real analytic functions}). As \(f\) is nonconstant, \(\|\nabla f\|^2\) is not identically zero on \(\Omega\). Therefore, by Theorem \ref{thm: zero set of real analytic fns}, its zero set must be of measure zero.
  \end{proof}

  The preceding theorems establish properties of minimal graph surfaces will be useful in making our subsequent choice of domain. We now present a corresponding result for the minimal graph transformation \(g\). Since \(f\) is real analytic on the connected domain \(\Omega\), its image is an interval \(I \subset \mathbb{R}\). Given a minimal graph transformation \(g : U \longrightarrow \mathbb{R}\) with \(I \subseteq U\), we address the following question: can \(g\prime^3 - g\prime\) vanish on any open subset of \(I\)? We demonstrate that this is not possible, we state and prove this fact in the following proposition.
  
\begin{proposition}
	\label{thm: NTMGT is not locally trivial}
	Let \(f : \Omega \longrightarrow \mathbb{R}\) be a minimal graph surface with range \(I\). Suppose \(g : U \longrightarrow \mathbb{R}\), where \(I \subseteq U\), is a NMG Transformation of \(f\); that is, \(g\prime^3 - g\prime\) is not identically zero on \(I\). Then there does not exist any open subinterval \(I_0 \subseteq I\) such that \(g\prime^3 - g\prime|_{I_0} \equiv 0\). (Note that \(g\prime^3 - g\prime\) can possibly vanish at few points in \(I\); the claim is that these zeros cannot form an open subinterval \(I_0 \subseteq I\).).
\end{proposition}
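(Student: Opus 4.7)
The plan is to argue by contradiction: I would suppose such an open subinterval $I_0 \subseteq I$ exists and then use real analyticity to force $g'^3 - g'$ to vanish on all of $I$, violating the hypothesis that $g$ is a NMG transformation.

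First I would pin down the shape of $g$ on $I_0$. The factorisation $g'(g'-1)(g'+1) = 0$ confines $g'(I_0)$ to the discrete set $\{-1,0,1\}$; since $g'$ is continuous and $I_0$ is a connected interval, its image must collapse to a single point, so $g' \equiv c$ on $I_0$ for some $c \in \{-1,0,1\}$, and therefore $g(t) = c t + d$ on $I_0$ for an appropriate constant $d \in \mathbb{R}$.

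Next I would transfer this affine behaviour into $\Omega$. Because $I_0$ is a nonempty open subinterval of $I = f(\Omega)$ and $f$ is continuous, $W := f^{-1}(I_0)$ is a nonempty open subset of $\Omega$ on which $g \circ f = c f + d$. I would introduce the auxiliary function $\Psi := g \circ f - c f - d$; both $f$ and $g \circ f$ are minimal graph surfaces and hence real analytic by Theorem~\ref{thm: MGS is real analytic}, so $\Psi$ is real analytic on the connected domain $\Omega$. Since $\Psi$ vanishes on the nonempty open set $W$, its zero set has positive measure, so Theorem~\ref{thm: zero set of real analytic fns}, used exactly in the style of Proposition~\ref{thm: MGS is non-constant almost everywhere}, forces $\Psi \equiv 0$ throughout $\Omega$. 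Unpacking this gives $g(t) = c t + d$ for every $t \in I$, whence $g'^3 - g' \equiv 0$ on $I$, contradicting the NMG hypothesis.

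The only genuinely delicate step is the upgrade from \emph{``$g$ is affine on an open piece of $I$''} to \emph{``$g$ is affine on all of $I$''}; without it, $g$ could in principle be affine on some subintervals of $I$ while behaving nontrivially on others, and no contradiction would arise. That upgrade is bought by the real analyticity of minimal graph surfaces combined with the identity principle for real analytic functions on a connected domain, which is precisely the role the preparatory propositions earlier in this section were designed to fulfil.
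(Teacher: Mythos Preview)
Your argument is correct and follows essentially the same route as the paper: assume an open subinterval $I_0$ on which $g'^3-g'$ vanishes, deduce that $g$ is affine there, pull back to an open subset of $\Omega$, and use real analyticity of both $f$ and $g\circ f$ together with Theorem~\ref{thm: zero set of real analytic fns} to propagate the affine identity to all of $\Omega$, hence to all of $I$. If anything, your version is slightly more careful than the paper's, since you explicitly include the case $c=0$ (the paper writes only $\epsilon=\pm1$) and you spell out the continuity/connectedness argument forcing $g'$ to be a single constant on $I_0$.
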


  \begin{proof}
  	Suppose, for contradiction, that such an open set \(I_0 \subseteq I\) exists. Then, by definition, \(g\) acts as a TMG Transformation on \(I_0\). Let \(\Omega_0 = f^{-1}(I_0)\). On \(\Omega_0\), the transformed surface \(\tilde{f} = g \circ f\) satisfies \(\tilde{f}|_{\Omega_0} = g(f|_{\Omega_0}) = \epsilon f|_{\Omega_0} + C\) for some constant \(C\) and \(\epsilon = \pm 1\).
  	
  	Both \(\epsilon f + C\) and \(\tilde{f}\) are minimal graph surfaces and are real analytic. Since they agree on the set \(\Omega_0\), which has positive measure, their difference \(\tilde{f} - (\epsilon f + C)\) is a real analytic function that vanishes on a set of positive measure. By Theorem~\ref{thm: zero set of real analytic fns}, it must be identically zero on its domain, implying \(\tilde{f} \equiv \epsilon f + C\) on \(\Omega\). Consequently, \(g\) is a TMG Transformation on the entire interval \(I\), meaning \(g\prime^3 - g\prime|_I \equiv 0\). This contradicts the initial hypothesis that \(g\) is nontrivial.
  \end{proof}

   \begin{remark}
   	   This author believes that a stronger result holds: if \(g\) is a NMG Transformation of a minimal graph surface \(f\), then the subset of the range \(I\) of \(f\) on which \(g\prime^3 - g\prime\) vanishes is of measure zero. A proof of this claim would be easily acheivable by establishing that \(f\) satisfies the following property: "for any subset \(I_0 \subseteq I\) of positive measure, its preimage \(f^{-1}(I_0)\) also has positive measure". Given that \(f\) is a real analytic function which is a very strong condition the author thinks that this property is true.
   \end{remark}

    Based on all the results established, we can now introduce the following choice of domains for our subsequent analysis.
   
    \subsection{Convenient Choice of Domain}\label{section: assumptions}
    Proposition~\ref{thm: f_x^2+f_y^2 is nonzero almost everywhere} establishes that for a non-constant minimal graph surface \( f : \Omega \longrightarrow \mathbb{R} \), the function \(\|\nabla f\|^2\) vanishes at most on a set of measure zero. Similarly, Proposition~\ref{thm: NTMGT is not locally trivial} states that if a minimal graph transformation \(g: U \longrightarrow \mathbb{R}\) is trivial on any open subset of the range of \(f\), then it is trivial globally. Consequently, for \(g\) to be nontrivial, \(g\prime^3 - g\prime\) cannot vanish on any open subset of \(f(\Omega)\). These results characterize the size and nature of the sets where \(\|\nabla f\|^2\) and \(g\prime^3 - g\prime\) vanish.
    
    Since all minimal graph surfaces are real analytic, it is sufficient to solve the problem on a smaller, well-behaved domain. Specifically, given a non-constant minimal graph surface \(f\) on a domain \(\Omega\) that admits a NMG Transformation \(g\), there exists a subdomain \(\Omega_0 \subseteq \Omega\) on which \(\|\nabla f\|^2\) and \((g\prime^3 - g\prime)(f)\) are non-vanishing. This presents two related problems:
    \begin{enumerate}
    	\item Find \(f\) and \(g\) on the original domains \(\Omega\) and \(I = f(\Omega)\).
    	\item Find the restrictions \(f|_{\Omega_0}\) and \(g|_{I_0}\) on the subdomains \(\Omega_0\) and \(I_0 = f(\Omega_0)\).
    \end{enumerate}
    Solving either problem is sufficient. A solution on \(\Omega\) directly restricts to a solution on \(\Omega_0\). Conversely, a solution on \(\Omega_0\) has a unique analytic continuation to \(\Omega\) by Theorem~\ref{thm: zero set of real analytic fns}, using this the minimal graph transformation \(g\) on \(I\) can be determined. Our strategy is to first find solutions on a suitable subdomain \(\Omega_0\) and then extend them globally via analytic continuation. So we see that the choice of domain is quite flexible and we can choose a domain of our convenience
    
    Therefore, our subsequent analysis is conducted under the assumption that \(\|\nabla f\|^2\) and \(g\prime^3 - g\prime\) are non-vanishing on \(\Omega\) and \(U\) (with \(f(\Omega) \subseteq U\)), respectively. The existence of such domains \(\Omega\) and \(U\) is justified by the following reasoning: Since \(f\) is a nonconstant minimal graph surface, there exists a point where \(\|\nabla f\|^2\) is nonzero. By the continuity of \(\|\nabla f\|^2\), there exists a region \(\Omega'\) around this point where \(\|\nabla f\|^2\) is never zero. Now consider the interval \(U' = f(\Omega')\). The same argument also works for the function \(g\prime^3(s) - g\prime(s)\), as \(g\) is a NMG Transformation, there exists a subinterval \(U \subseteq U'\) where the function \(g\prime^3(s) - g\prime(s)\) is never vanishing. Finally, take \(\Omega\) to be any connected open subset of \(f^{-1}(U) \cap \Omega'\). For this choice of \(\Omega\) and \(U\), the functions \(\|\nabla f\|^2\) and \(g\prime^3 - g\prime\) are non-vanishing on \(\Omega\) and \(U\), respectively, and \(f(\Omega) \subseteq U\).
    
    The subsequent steps will involve identifying the maximal domain where this condition holds, locating the singular points where either function vanishes, and finally extending the solution to the entire plane via analytic continuation, excluding these singular points.

  \section{Non-Trivial Minimal Graph Transformations of Non-Constant Minimal Graph Surfaces}
   We begin by supposing that \( f : \Omega \longrightarrow \mathbb{R} \) is a Minimal Graph Surface and \(g: U \subseteq \mathbb{R} \longrightarrow \mathbb{R}\) (with \(f(\Omega)\subseteq U\)) is a NMG Transformation of \(f\). We further assume that in the region \(\Omega\), the functions \(\|\nabla f\|^2\) and \(g\prime^3(s) - g\prime(s)\) are never zero. Then, the condition that \(g\) is a NMG Transformation of \(f\) is equivalent to the following pair of equations:
   \[
     \boxed{\frac{\Delta f}{\|\nabla f\|^2}\Bigg|_{(x,y)}=\frac{g\prime\prime}{g\prime^3 - g\prime}\Bigg|_{f(x,y)}} \quad \textbf{and} \quad \boxed{(1 + f_y^2) f_{xx} - 2 f_{xy} f_x f_y + (1 + f_x^2) f_{yy} = 0}.
   \]
    
   Let us denote \(h = \frac{g\prime\prime}{g\prime^3 - g\prime}\) and \(G = g\prime\). We are going to show that if a function \(h\) satisfies the condition \(\frac{\Delta f}{\|\nabla f\|^2} = h(f)\) for a minimal graph surface \(f\), then \(f\) admits a NMG Transformation \(g\). This transformation \(g\) can be explicitly found from the function \(h\) under the assumption that \(g\) and \(h\) satisfy \(\frac{g\prime\prime}{g\prime^3 - g\prime} \big|_f = \frac{\Delta f}{\|\nabla f\|^2} = h(f)\), via the following calculation:
     
   We are given the relation \(\frac{g\prime\prime}{g\prime^3 - g\prime} = \frac{G'}{G^3 - G} = h\), where \(G = g\prime\). A partial fraction decomposition gives \(-\frac{G'}{G} + \frac{G'}{2(G-1)} + \frac{G'}{2(G+1)} = h\), and integrating both sides yields  \(\ln\big[\frac{\sqrt{|G^2 - 1|}}{|G|}\big]dt = \int h dt+C_0\) for some constant \(C_0\). Exponentiating and squaring gives \(e^{2C_0} e^{2\int h\, dt} = \frac{|G^2 - 1|}{G^2}\), which leads to the two cases \(G^2 > 1\) and \(0 < G^2 < 1\). Since \(G^3 - G \neq 0\) by assumption, \(G^2\) cannot be 0 or 1. Furthermore, because \(G^2\) is continuous on the connected set \(U\), its image is connected; consequently, \(G^2\) must lie entirely in one of the two intervals \((0,1)\) or \((1,\infty)\), as a transition between them would require \(G^2\) to attain the value 1 due to Intermediate Value Theorem for Continous Functions, contradicting the assumption. So lets consider the following two cases:
   
   \textbf{Case 1:} Let \(G^2 > 1\), then \( |G^2 - 1| = G^2 - 1 \). Hence \( e^{2C_0}e^{2\int h\, dt} = 1 - \tfrac{1}{G^2}\), which immediately implies that \( G = \pm \frac{1}{\sqrt{1 -e^{2C_0} e^{2\int h\, dt}}}\). Since \(G = g\prime\), it follows that: \( g = \pm \int \frac{1}{\sqrt{1 - e^{2C_0}e^{2\int h\, dt}}}+C_1\) for some constant \(C_1\)

   \textbf{Case 2:} Similarly if we let \(0<G^2<1\), then \( |G^2 - 1| = 1-G^2\). Hence \( e^{2C_0}e^{2\int h\, dt} = \tfrac{1}{G^2}-1\), which immediately implies that \( G = \pm \frac{1}{\sqrt{1 +e^{2C_0} e^{2\int h\, dt}}}\). Since \(G = g\prime\), it follows that: \( g = \pm \int \frac{1}{\sqrt{1 + e^{2C_0}e^{2\int h\, dt}}}+C_1\) for some constant \(C_1\)\\
   \textbf{Combining both cases}, we write \(e^{2C_0}\) as \(C^2\) (since \(e^{2C_0} > 0\)) and \(C_1\) as \(D\). This leads us to the general expression for a minimal graph transformation \(g\) of the minimal graph \(f\), under the condition that \(\tfrac{\Delta f}{\|\nabla f\|^2} = h(f)\). The solution is given by the formula:
   \begin{equation}
   	\label{eqn: conv h to g}
   	\boxed{ g = \pm \int \frac{1}{\sqrt{1 \pm C^2e^{2\int h\, dt}}}\, dt + D},
   \end{equation}
   where the specific choice of constant \(C>0\) is determined by the requirement that the function \(\frac{1}{\sqrt{1 \pm C^2e^{2\int h\, dt}}}\) is real-valued and well-defined.  This calculation implies that if we are able to find a pair \((f,h)\) satisfying the system of partial differential equations:
   \begin{equation}
   	\label{eqn: Non-Trivial MGT: 1}
   	\boxed{(1 + f_y^2) f_{xx} - 2 f_{xy} f_x f_y + (1 + f_x^2) f_{yy} = 0}\quad \text{and} \quad  \boxed{\tfrac{\Delta f}{\|\nabla f\|^2} = h(f)},
   \end{equation}
   then we obtain a minimal graph surface \(f\) and a NMG Transformation given by Formula~\ref{eqn: conv h to g}. Therefore, the main aim of the rest of this paper is to exhaustively classify all possible pairs \((f,h)\) that satisfy Equation~\ref{eqn: Non-Trivial MGT: 1}. We call System~\ref{eqn: Non-Trivial MGT: 1} the \textbf{Nontrivial Minimal Graph Transformation Equations (NMG Transformation Equation)} and the function \(h\) in~\ref{eqn: Non-Trivial MGT: 1} the \textbf{First Characteristic Function} of the Nontrivial Minimal Graph Transformation Problem.
   
   In the next section, we will check for the existence of a minimal graph surface \(f\) for which \(h\equiv0\) is the first characteristic function that solves the NMG Transformation Equation given in \ref{eqn: Non-Trivial MGT: 1}.

   \section{Zero First Characteristic function} \label{section: h=0}

   The aim of this section is the following: Find and characterise all the Minimal Graph Surfaces \(f\) such that First Characteristic Function \(h\) of its NMG Transformation Problem is zero, under the choice of domain as given in \ref{section: assumptions}
   
   It will be very useful for our calculations to convert from the real variables \(x, y\) and the real partial derivatives \(\tfrac{\partial}{\partial x}, \tfrac{\partial}{\partial y}\) to the complex variables \(z, \bar{z}\) and the complex partial derivatives \(\tfrac{\partial}{\partial z}, \tfrac{\partial}{\partial \bar{z}}\). Let us therefore express the NMG Transformation Equation \ref{eqn: Non-Trivial MGT: 1} in terms of these complex variables and derivatives:
   
   \begin{equation}
   	\label{eqn: Complex Non-Trivial MGT: 2}
   	\boxed{f_{zz}f_{\bar{z}}^2 + f_{\bar{z}\bar{z}}f_{z}^2 - f_{z\bar{z}}(1 + 2f_z f_{\bar{z}}) = 0} \quad \text{and} \quad \boxed{\frac{f_{z\bar{z}}}{f_z f_{\bar{z}}} = h(f)}
   \end{equation}
   
   Here, the first equation is the complex form of the minimal surface equation, and the second is the complex form of \(\tfrac{\Delta f}{\|\nabla f\|^2} = h(f)\). Now, since \(h \equiv 0\), the second equation implies \(f_{z\bar{z}} = 0\); that is, \(f\) is harmonic. Substituting this into the complex minimal surface equation simplifies it to \(f_{zz} f_{\bar{z}}^2 + f_{\bar{z}\bar{z}} f_{z}^2 = 0\). Hence, the system we need to solve is:
   
   \begin{equation}
   	\label{eqn: Complex Non-Trivial MGT: h=0}
   	\boxed{f_{zz} f_{\bar{z}}^2 + f_{\bar{z}\bar{z}} f_{z}^2 = 0} \quad \text{and} \quad \boxed{f_{z\bar{z}} = 0}
   \end{equation}

   By the choice of domain as in \ref{section: assumptions}, we have that \(\|\nabla f\|^2 \not\equiv 0\), so \(f_z\) and \(f_{\bar{z}}\) are never zero in the domain of \(f\). Consequently, the first equation of \ref{eqn: Complex Non-Trivial MGT: h=0} becomes \(\frac{f_{zz}}{f_z^2} = -\frac{f_{\bar{z}\bar{z}}}{f_{\bar{z}}^2}\), which can be rewritten as \(\big(-\frac{1}{f_z}\big)_z = \big( \frac{1}{f_{\bar{z}}} \big)_{\bar{z}}\). Since \(f_{z\bar{z}} = 0\), it follows that \(f_z\) is a function of \(z\) alone and \(f_{\bar{z}}\) is a function of \(\bar{z}\) alone. This implies that \(\big( -\frac{1}{f_z} \big)_z\) is a function only of \(z\), while \(\big( \frac{1}{f_{\bar{z}}} \big)_{\bar{z}}\) is a function only of \(\bar{z}\). Their equality forces them to be equal to a common complex constant \(C_0 \in \mathbb{C}\), i.e.,
   \[
   \left( -\frac{1}{f_z} \right)_z = C_0 = \left( \frac{1}{f_{\bar{z}}} \right)_{\bar{z}}.
   \]
   This leads us to two cases:
   
   \textbf{Case 1: } \(C_0 = 0\). This implies that \(\frac{1}{f_z}\) and \(\frac{1}{f_{\bar{z}}}\) are constant, so \(f_z = C_1\) and \(f_{\bar{z}} = C_2\) for some constants \(C_1, C_2 \in \mathbb{C}\). Therefore, \(f(z, \bar{z}) = C_1 z + C_2 \bar{z} + C_3\). The condition that \(f\) is real-valued implies \(f(0,0) = C_3 \in \mathbb{R}\) and, from \(\bar{f} = f\), that \(C_1 z + C_2 \bar{z} + C_3 = \bar{C_1} \bar{z} + \bar{C_2} z + C_3\). Comparing coefficients yields \(C_2 = \bar{C_1}\). Hence, \(f(z, \bar{z}) = C_1 z + \overline{C_1 z} + C_3\). Letting \(C_1 = \tfrac{1}{2}(\alpha - i\beta)\) and \(C_3 = \gamma\), we obtain:
   
   \begin{equation}
   	\label{eqn: case h=0 Plane}
   	\boxed{f(x,y) = \alpha x + \beta y + \gamma} \quad \text{for arbitrary real constants } \alpha, \beta, \gamma.
   \end{equation}
   
   This represents a plane. Thus, a \textbf{plane} is a minimal graph surface that admits a NMG Transformation. Before finding its explicit transformation, let us consider the second case.
   
   \textbf{Case 2: } \(C_0 \neq 0\). We have \(\big( -\frac{1}{f_z} \big)_z = C_0 = \big( \frac{1}{f_{\bar{z}}} \big)_{\bar{z}}\). Integrating, and using the fact that \(f_z\) is a function of \(z\) alone and \(f_{\bar{z}}\) is a function of \(\bar z\) alone, we obtain \(-\frac{1}{f_z} = C_0 z + D_1\) and \(\frac{1}{f_{\bar{z}}} = C_0 \bar{z} + D_2\) for some constants \(C_0, D_1, D_2 \in \mathbb{C}\). Thus, \(f_z = \frac{-1}{C_0 z + D_1}\) and \(f_{\bar{z}} = \frac{1}{C_0 \bar{z} + D_2}\). Since \(f\) is real-valued, we have the relation \(\overline{f_z} = f_{\bar{z}}\), which implies  \(\overline{\big(\frac{-1}{C_0z+D_1}\big)}=\frac{-1}{\overline{C_0}\bar{z}+\overline{D_1}} =\frac{1}{C_0\bar{z}+D_2}\) Consequently, \(-\overline{C_0} \bar{z} - \overline{D_1} = C_0 \bar{z} + D_2\). Comparing coefficients gives \(-\overline{D_1} = D_2\) and \(-\overline{C_0} = C_0\). The latter implies that \(C_0\) is purely imaginary, so we write \(C_0 = i\alpha\) for some real \(\alpha \neq 0\). Substituting this yields \(f_z = \frac{-1}{i \alpha z + D_1}\) and \(f_{\bar{z}} = \frac{1}{i \alpha \bar{z} - \overline{D_1}}\). Integrating these expressions, we find \(f = -\frac{1}{i\alpha} \ln \left( \frac{i\alpha z + D_1}{i\alpha} \right) + A(\bar{z})\) and \(f = \frac{1}{i\alpha} \ln \left( \frac{i\alpha \bar{z} - \overline{D_1}}{i\alpha} \right) + B(z)\), where \(A\) and \(B\) are functions of \(\bar{z}\) and \(z\), respectively. Comparing both expressions for \(f\), we deduce that 
   \[
    f(z, \bar{z}) = \frac{1}{i\alpha} \ln \left( \frac{i\alpha \bar{z} - \overline{D_1}}{i\alpha z + D_1} \right) + C_1
   \]
   for some constant \(C_1 \in \mathbb{C}\). Now, let us rewrite the constants. Set \(\frac{D_1}{i\alpha} = \beta + i\gamma\) and \(C_1 = \delta \in \mathbb{C}\). Note that \(-\frac{\overline{D_1}}{i\alpha} = \overline{\left( \frac{D_1}{i\alpha} \right)} = \beta - i\gamma\). Substituting \(z = x + iy\), we obtain
   \[
     f(x,y) = \frac{1}{i\alpha} \ln \left( \frac{x + \beta - i(y + \gamma)}{x + \beta + i(y + \gamma)} \right) + \delta = \frac{-2}{2i\alpha} \ln \left( \frac{1 + i\frac{y+\gamma}{x+\beta}}{1 - i\frac{y+\gamma}{x+\beta}} \right) + \delta
   \]
   Referring to \cite{JD}, Page 142, Section 2.7.2.2, Equation 3, we get that 
   
   \begin{equation}
   	\label{eqn: case h=0 Helicoid}
   	\boxed{f(x,y) = \frac{-2}{\alpha} \tan^{-1} \left( \frac{y + \gamma}{x + \beta} \right) + \delta}
   \end{equation}

   for real constants \(\alpha \neq 0, \beta, \gamma, \delta\). Here, \(\delta\) must be real, as all other terms are real. This function represents a helicoid. Thus, a \textbf{helicoid} is also a minimal graph surface that admits a NMG Transformation.
   
   We conclude that planes and helicoids are the only minimal graph surfaces admitting a NMG Transformation with a first characteristic function \(h \equiv 0\). We now proceed to find the explicit form of these transformations.

   \textbf{Non-Trivial Minimal Graph Transformations:} We know that \(\frac{g\prime\prime}{g\prime^3 - g\prime} = h\) and that \(h=0\). This implies that \(g\prime\prime=0\), which has the solution \(g(t)=at+b\). Hence, the transformations are affine maps (scaling and translation in the \(z\)-direction). For the transformation to be nontrivial, the scaling constant \(a\) must not be \(1\), \(-1\), or \(0\).

   \textbf{Transformed Minimal Graph Surface:} The transformed minimal surfaces are \(g \circ f\), where \(f\) is a minimal graph surface admitting the transformation \(g\). They are given by:
   \[
     a\alpha x + a\beta y + a\gamma + b \quad \text{for arbitrary real constants } a, b, \alpha, \beta, \gamma,
   \]
   which is again a plane when \(f\) is a plane. In the case where \(f\) is a helicoid, the transformed surface is:
   \[
    \frac{-2a}{\alpha}\tan^{-1}\left(\frac{y+\gamma}{x+\beta}\right) + a\delta + b \quad \text{for real constants } a, b, \alpha, \beta, \gamma, \delta,
  \]
  which is again a helicoid.

  \textbf{Maximal Domain and Singular Points of the Solution} We know that the NMG Transformations here are precisely \(g(t)=at+b\), where \(a \notin \{0, 1, -1\}\). For such \(g\), the expression \(g\prime^3 - g\prime = a^3 - a\) is defined and nonzero on all of \(\mathbb{R}\). Now, consider the gradient condition. For the plane given by Equation~\ref{eqn: case h=0 Plane}, we have \(\|\nabla f\|^2 = \alpha^2 + \beta^2\). This is zero if and only if \(\alpha = \beta = 0\), which would make \(f = \gamma\) a constant function (a case already dealt with). Therefore, for a non-constant plane, \(\|\nabla f\|^2\) is a nonzero constant and well-defined everywhere in \(\mathbb{R}^2\). For the helicoid given by \ref{eqn: case h=0 Helicoid}, we find that \(f_z = \frac{-1}{i\alpha(z + \beta + i\gamma)}\) is never zero, but it is undefined singular at the point \(z = -\beta - i\gamma\), i.e., at \((x, y) = (-\beta, -\gamma)\). Consequently, \(\|\nabla f\|^2\) is well-defined on the maximal domain \(\mathbb{R}^2 \setminus \{(-\beta, -\gamma)\}\). And note that the point \((-\beta, -\gamma)\) is where the axis of the helicoid lies.
  
  \section{Non-Zero First Characteristic function} 
  
  In the last section, we showed that helicoids and planes are precisely the minimal graph surfaces that admit a NMG Transformation when the first characteristic function is identically zero. In this and the next section, we will develop a method for solving the NMG Transformation problem when the first characteristic function is nonzero.
  
  A natural question arises: what happens if there exists some open set \(U_0 \subseteq U\) and \(\Omega_0 \subseteq \Omega\) (where \(U\) is the domain of \(h\) and \(\Omega\) is the domain of \(f\)) such that \(f(\Omega_0) \subseteq U_0\) and the first characteristic function \(h\) is zero on \(U_0\), while being nonzero on the rest of \(U\)? A straightforward answer is that, on the subdomain \(\Omega_0\), we would apply the analysis from Section~\ref{section: h=0}. However, a more interesting question is: do such domains actually exist within the domain of a minimal surface? The answer is no; such minimal graph surfaces do not exist. We will try to prove this more formally in the following proposition:
  
  \begin{proposition}
  	\label{thm: h is non zero every where}
  	Let \(f:\Omega\longrightarrow\mathbb{R}\) be a minimal graph surface. Suppose there exist a subdomain \(\Omega_0\subseteq\Omega\) such that \(\Delta f\) is zero on \(\Omega_0\). Then \(\Delta f\equiv 0\) on whole of \(\Omega\)
  \end{proposition}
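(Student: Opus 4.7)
The proof should be essentially the same real‑analyticity argument used in Proposition~\ref{thm: MGS is non-constant almost everywhere}, applied now to $\Delta f$ instead of $f-C$. The plan is to observe that $\Delta f$ inherits real analyticity from $f$, and then invoke the theorem that a real analytic function which vanishes on a set of positive measure must vanish identically on its (connected) domain.

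First I would recall, via Theorem~\ref{thm: MGS is real analytic} in Appendix~\ref{section: Appendix analytic function}, that every minimal graph surface $f:\Omega\longrightarrow\mathbb{R}$ is real analytic on $\Omega$. Applying the algebraic and differential closure properties of real analytic functions (Theorems~\ref{thm: algebra of real analytic functions} and~\ref{thm: calculus of real analytic functions} in the same appendix), the partial derivatives $f_{xx}$ and $f_{yy}$ are real analytic on $\Omega$, and hence so is their sum $\Delta f = f_{xx}+f_{yy}$.

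Next, by hypothesis the subdomain $\Omega_0\subseteq\Omega$ is open and nonempty, so it has strictly positive Lebesgue measure. The real analytic function $\Delta f$ vanishes on all of $\Omega_0$, so its zero set has positive measure. By Theorem~\ref{thm: zero set of real analytic fns}, the zero set of a real analytic function on a connected open set either has measure zero or is all of the domain. Since $\Omega$ is connected (by the definition of a minimal graph surface) and the zero set of $\Delta f$ does not have measure zero, it must coincide with $\Omega$. Hence $\Delta f\equiv 0$ on all of $\Omega$, which is exactly the desired conclusion.

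There is essentially no technical obstacle here: the only subtle point is making sure that the appendix theorems really are being applied in the right form, namely that real analyticity is preserved by taking partial derivatives, and that the zero set of a non‑identically‑zero real analytic function on a connected open subset of $\mathbb{R}^2$ cannot contain a set of positive measure. Both are standard and are already cited in earlier propositions of this section, so the proof reduces to stringing these facts together in one short paragraph.
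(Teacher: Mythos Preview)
Your proof is correct and follows essentially the same route as the paper: both argue that $\Delta f$ is real analytic (since $f$ is) and then apply Theorem~\ref{thm: zero set of real analytic fns} to conclude from vanishing on a set of positive measure. Your version is slightly more explicit in citing the closure properties of real analytic functions and in noting that the open subdomain has positive measure, but the underlying argument is identical.
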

  
  \begin{proof}
  	The function \(f\) is real analytic, and consequently, so is \(\Delta f\). If \(\Delta f\) vanishes on a set of positive measure, then by Theorem~\ref{thm: zero set of real analytic fns}, it must be identically zero on \(\Omega\).
  \end{proof}
  
  This result implies that the range \(I\) of \(f\) cannot contain an open subset \(U_0\) on which \(h\) vanishes. Suppose, for contradiction, that such a \(U_0\) exists. Then its preimage \(\Omega_0 = f^{-1}(U_0)\) is an open subset of \(\Omega\) on which \(\Delta f\) vanishes. By the preceding theorem, this would force \(\Delta f \equiv 0\) on all of \(\Omega\).
  
  \subsection{Additional Conditions on the Choice of Domain \ref{section: assumptions}}\label{section: assumption 2}
  
  In Section~\ref{section: h=0}, we explicitly characterized all minimal graph surfaces for which the first characteristic function \(h\) is identically zero. In this section, we operate under the assumption that \(h\) is not the zero function. Proposition~\ref{thm: h is non zero every where} provides insight into the size of the set where \(h\) may vanish. Accordingly, we now restrict our domain \(\Omega\) to a sufficiently small, simply connected region such that \(h(f)\) is non-vanishing on \(\Omega\).
  
  Note that the domain restrictions introduced here, as well as in \ref{section: assumptions}, do not affect the global nature of our analysis. Since solutions to the minimal surface equation are real analytic, any solution defined on a small open set \(\Omega\) possesses a unique analytic continuation to a maximal domain due to Theorem \ref{thm: zero set of real analytic fns}. Therefore, any solution found on a restricted domain inherently defines a unique minimal graph surface on a potentially larger region. Henceforth, we will proceed under this assumption.
  
   \section{Strategy for solving the system \ref{eqn: Complex Non-Trivial MGT: 2}}
   In this section we will present a strategy for solving the NMG Transformation Problem. As a first step we will reduce the complexity of the system by converting it into a simpler one.

   \subsection{Step 1: Converting the system \ref{eqn: Complex Non-Trivial MGT: 2} to a simpler one}
   \label{section: Step 1}
    We begin with the minimal surface equation in complex coordinates: \(f_{zz}f_{\bar{z}}^2 + f_{\bar{z}\bar{z}}f_z^2 - f_{z\bar{z}}(1 + 2f_z f_{\bar{z}}) = 0\), together with the relation \(\frac{f_{z\bar{z}}}{f_z f_{\bar{z}}} = h(f)\). The latter equation can be rewritten as \( \frac{f_{z\bar{z}}}{f_z} = f_{\bar{z}} \cdot h(f)\). 

   Let \(H_1 = \int h\,d\tau\). Then the equation becomes \(\partial_{\bar{z}}\ln(f_z) = \partial_{\bar{z}}(H_1 \circ f)\), which implies \(\partial_{\bar{z}}\left(\ln(f_z) - H_1 \circ f\right) = 0\). Consequently, \(\ln(f_z) - H_1 \circ f = a(z)\) for some function \(a(z)\) depending only on \(z\). This yields \(f_z = e^{a(z) + H_1 \circ f}\), or equivalently, \(e^{- H_1 \circ f}f_z=e^{a(z)}\). 

   Now define \(H = \int e^{-H_1} \, d\tau\). Then \(\partial_z(H \circ f) = e^{a(z)}\). Introducing \(u = H \circ f\), we see that \(u_z = e^{a(z)}\), from which it follows that \(u\) is harmonic:
   \[
     \Delta u=u_{z\bar{z}} = 0 \quad \text{or equivalently} \quad u(z,\bar{z}) = A(z) + \overline{A(z)},
   \]
   for some holomorphic function \(A(z)\), since every harmonic function is the real part of some holomorphic function. We now present some important remarks.

   \begin{remark}
	  \begin{enumerate}
		 \item The function \(H\) defined in the calculation above is a diffeomorphism from the domain of \(h\) to the range of \(H\). This follows because \(H\) is a smooth function which is the integral of the strictly positive function \(e^{-H_1}\), and we know that the integral of a strictly positive function is strictly increasing and hence bijective. Moreover, its derivative \(H' = e^{-H_1} > 0\) is never zero in its domain, so by the inverse function theorem there exists a smooth inverse \(K\) of \(H\).
		
		 \item Through the above calculation, we have converted the equation \(\frac{f_{z\bar{z}}}{f_z f_{\bar{z}}} = h(f)\) into \(\Delta u = 0\), which is a much simpler equation to handle.
		 
		 \item Multiple diffeomorphisms \(H\) can correspond to the same first characteristic function \(h\), due to the arbitrary integration constant in the definition of \(H\).
		
	   	 \item Note that \(u_z\) and \(u_{\bar{z}}\) are never zero. This is because \(u_z = H'(f)f_z\), and since \(H\) is a diffeomorphism, \(H'\) is never zero. Furthermore, \(f_z\) is never zero since we assumed in \ref{section: assumptions} that \(\|\nabla f\|^2 = 4f_z f_{\bar{z}}\) is never zero. Thus \(u_z\) is never zero. Similarly, \(u_{\bar{z}}\) is never zero in the domain. Therefore, in future calculations we may safely place these terms in denominators when necessary.
	 \end{enumerate}
 \end{remark}

  We now will rewrite the Minimal Surface Equation in terms of this \(u\). For that we use the relation \(K(u)=f\) and we substitute this in the Minimal Surface Equation. We use the following substitution:
  \begin{align}
  	\label{eqn: partial derivatives of f}
  	&\boxed{f_z=K'(u)u_z} \quad\quad\quad \boxed{f_{\bar{z}}=K'(u) u_{\bar{z}}} \quad\quad\quad \boxed{f_{zz}=K''(u) u_z^2+K'(u)u_{zz}}\\
  	\label{eqn: second partial derivatives of f}
  	&\boxed{f_{\bar{z}\bar{z}}=K''(u)u_{\bar{z}}^2+K'(u)u_{\bar{z}\bar{z}}} \quad\quad \boxed{f_{z\bar{z}}=K''(u)u_{\bar{z}}u_z+K'(u)u_{z\bar{z}}=K''(u)u_{\bar{z}}u_z}    
  \end{align}
  
  Now since we have that \(f_{z\bar{z}}=f_z\cdot f_{\bar{z}}\cdot h(f)\), we get that: $\boxed{f_{z\bar{z}}={K'}^2(u) h(K(u)) u_z u_{\bar{z}}}$\\

  We now substitute all the boxed facts given above in the Minimal Surface Equation in complex coordinate as given in the equation \ref{eqn: Complex Non-Trivial MGT: 2} and then simplifying by factoring out a \({K'}^2(u)\) from all terms to get: \(K'(u) u_z^2 u_{\bar{z}\bar{z}}+K'(u) u_{\bar{z}}^2 u_{zz}-(h\circ K)(u)u_z u_{\bar{z}}=0\).
  
  Again simplifying using the fact that \(K'(u)\) is never zero (as \(K\) is a diffeomorphism), we have:
  \[
  \frac{u_z^2\cdot u_{\bar{z}\bar{z}}+ u_{\bar{z}}^2\cdot u_{zz}}{u_z \cdot u_{\bar{z}}}=\frac{h(K(u))}{K'(u)} \quad\quad\text{and}\quad\quad u_{z\bar{z}}=0
  \]
  
  Now call the function \(\frac{h(K(s))}{K'(s)}=j(s)\). We have:
  \begin{equation}
  	\label{eqn: Modified Complex Non-Trivial MGT: 2}
  	\boxed{\frac{u_z^2\cdot u_{\bar{z}\bar{z}}+ u_{\bar{z}}^2\cdot u_{zz}}{u_z \cdot u_{\bar{z}}}=j(u)} \quad\quad\text{and}\quad\quad \boxed{u_{z\bar{z}}=0}    
  \end{equation}
  
  We call the system given above as the \textbf{Modified Non-Trivial Minimal Graph Transformation Problem (Modified NMG Transformation Problem )} and the function \(j\) is called the \textbf{Second Characteristic Function of the Non-Trivial Minimal Graph Transformation Problem}.\\
  
Before going further, let us understand the relations between the various functions we have discussed above in the following remarks.

\begin{remark}\label{remark: rln bw 1st and 2nd Char Fns}
	\begin{enumerate}
		\item \(h(s) = \frac{-H''(s)}{H'(s)}\): We know that \(H(s) = \int e^{-H_1}\), so \(H' = e^{-H_1}\) and \(H'' = -H_1' e^{-H_1} = -h e^{-H_1}\). This implies that \(h(s) = \frac{-H''(s)}{H'(s)}\), as required.
				
		\item \(j(H(s)) = -H''(s)\): We have \(j(H(s)) = \frac{h(K(H(s)))}{K'(H(s))}\). Noting that \(K\) is the inverse of \(H\) and by the chain rule we have \(\frac{1}{K'(H(s))} = H'(s)\), we get \(j(H(s)) = h(s) H'(s)\). Substituting the previous relation for \(h(s)\), we obtain \(j(H(s)) = -H''(s)\), as required.
		
		\item We have that \(u = H(f)\) and the domain of \(f\) is \(\Omega\), hence the domain of \(u\) is also \(\Omega\). Similarly, \(\frac{h(K(s))}{K'(s)} = j(s)\), so the domain of \(j\) is the domain of \(K\), which is the interval \(J = H(I)\) where \(I = \operatorname{Range}(f)\).
		
		\item The function \(j\) never vanishes in its domain, and its values have the same sign as the function \(h\) in its domain: This follows from the definition \(j(s) = \frac{h(K(s))}{K'(s)}\) and the fact that the derivative \(K'(s) = \frac{1}{H'(K(s))}\) is strictly positive, since \(H'(s) = e^{-H_1}\) is strictly positive in its domain. Hence \(j\) has to be entirely positive or entirely negative in its domain.
		
		\item Given a diffeomorphism \(H\), there exists a unique second characteristic function \(j\). This follows from the relation \(j(H(s)) = -H''(s)\), which implies \(j(t) = j(H(K(t))) = -H''(K(t))\). However, as noted in the previous remark, multiple diffeomorphisms \(H\) can correspond to the same first characteristic function \(h\). Therefore, given a first characteristic function \(h\), the corresponding second characteristic function \(j\) is not uniquely determined.
	\end{enumerate} 
\end{remark}

\subsection{Step 2: Equivalence of System \ref{eqn: Complex Non-Trivial MGT: 2} and \ref{eqn: Modified Complex Non-Trivial MGT: 2}}
\label{section: Step 2}

In this section, we will address the equivalence of systems \ref{eqn: Complex Non-Trivial MGT: 2} and \ref{eqn: Modified Complex Non-Trivial MGT: 2}. Specifically, we know from the calculations in Section \ref{section: Step 1} that for every solution \((f,h)\) of the NMG Transformation Problem \ref{eqn: Complex Non-Trivial MGT: 2}—where the first characteristic function \(h\) is non-vanishing on the connected interval \(I = \operatorname{Range}(f)\) (i.e., either entirely positive or entirely negative on its domain \(I\))—there exists a solution \((u,j)\) of the Modified NMG Transformation Problem. This solution is explicitly given by \(u = H \circ f\), where \(H(t) = \int e^{-\int h}\), and \(j(s) = \frac{h(K(s))}{K'(s)}\), where \(K = H^{-1}\). The second characteristic function \(j\) is also non-vanishing on its domain \(J\) (i.e., either entirely positive or entirely negative), and it have the same sign as the first characteristic function \(h\).

To establish equivalence, we must consider the converse: suppose \((u,j)\) is a solution of the Modified NMG Transformation Problem \ref{eqn: Modified Complex Non-Trivial MGT: 2}, with the second characteristic function \(j\) being entirely positive or entirely negative on its domain. Does there exist a pair \((f,h)\) that solves the NMG Transformation Problem \ref{eqn: Complex Non-Trivial MGT: 2}, such that \(u_0 = H \circ f\) (where \(H(t) = \int e^{-\int h}\)) and \(j_0(s) = \frac{h(K(s))}{K'(s)}\) (where \(K = H^{-1}\)), with \(u_0\) and \(j_0\) being the restrictions of \(u\) and \(j\) to a possibly smaller domain and the first characteristic function \(h\) being either entirely positive or entirely negative, with the same sign as the second characteristic function \(j\) on its domain?

Thankfully, the answer is yes. We prove this fact in the following two propositions.
  
\begin{proposition}
	Let \(u:\Omega\longrightarrow\mathbb{R}\) and \(j:J\longrightarrow\mathbb{R}\) be two smooth function respectively defined on a domain \(\Omega\) and interval \(J\) with \(u(\Omega)=J\). Now suppose  \((u,j)\) be a solution of the Modified Non-Trivial Minimal Graph Transformation Problem \ref{eqn: Modified Complex Non-Trivial MGT: 2}, with the second characteristic function \(j\) and the functions \(u_z, u_{\bar z}\) being entirely positive or entirely negative on their respective domains. Suppose there exists a function \(h: I\longrightarrow\mathbb{R}\) which is entirely positive or entirely negative and has the same sign as \(j\) on its domain, satisfying the condition:
	\[
	j(s) = \frac{h(K(s))}{K'(s)} \quad \text{where} \quad K = H^{-1} \quad \text{and} \quad H = \int e^{-\int h} \quad \text{and} \quad H(I)=J.
	\]
	Then for \(f:\Omega\longrightarrow\mathbb{R}\) defined by \(f = K(u)\), the pair \((f,h)\) satisfies the Non-Trivial Minimal Graph Transformation Problem \ref{eqn: Complex Non-Trivial MGT: 2}, and \(f_z,f_{\bar z}\) are nonvanishing in its domain.
\end{proposition}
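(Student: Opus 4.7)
The plan is to reverse the derivation carried out in Section~\ref{section: Step 1}. I define $f = K(u)$, which makes sense because $u$ takes values in $J = H(I)$, the domain of $K$, and then $f$ automatically takes values in $I$, the domain of $h$. The goal is to verify both equations of~\ref{eqn: Complex Non-Trivial MGT: 2} along with the non-vanishing of $f_z$ and $f_{\bar z}$.

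Applying the chain rule exactly as in~\ref{eqn: partial derivatives of f}--\ref{eqn: second partial derivatives of f}, together with the hypothesis $u_{z\bar z}=0$, gives $f_z = K'(u)\, u_z$, $f_{\bar z} = K'(u)\, u_{\bar z}$, and $f_{z\bar z} = K''(u)\, u_z u_{\bar z}$. Non-vanishing of $f_z$ and $f_{\bar z}$ is immediate, since $K'(s) = 1/H'(K(s)) = e^{H_1(K(s))}$ is strictly positive while $u_z, u_{\bar z}$ are nonvanishing by hypothesis.

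The key algebraic identity needed is
\[
K''(s) = h(K(s))\bigl(K'(s)\bigr)^2.
\]
I would derive it by differentiating $H(K(s)) = s$ twice to obtain $H''(K(s))(K'(s))^2 + H'(K(s))\, K''(s) = 0$, then applying $H' = e^{-H_1}$ and $H'' = -h\, H'$ to solve for $K''(s)$. This identity immediately yields $f_{z\bar z} = h(f)(K'(u))^2 u_z u_{\bar z} = h(f) f_z f_{\bar z}$, which is the second equation of~\ref{eqn: Complex Non-Trivial MGT: 2}. As a further consequence, $K''(s)/(K'(s))^3 = h(K(s))/K'(s) = j(s)$.

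For the complex minimal surface equation, I substitute the derivatives into $f_{zz}f_{\bar z}^2 + f_{\bar z\bar z}f_z^2 - f_{z\bar z}(1 + 2f_z f_{\bar z})$ and expand. The two copies of $K''(u)(K'(u))^2 u_z^2 u_{\bar z}^2$ arising from $f_{zz}f_{\bar z}^2$ and $f_{\bar z\bar z}f_z^2$ cancel against $2 f_{z\bar z} f_z f_{\bar z}$, leaving $(K'(u))^3\bigl[u_{zz} u_{\bar z}^2 + u_{\bar z\bar z} u_z^2\bigr] - K''(u) u_z u_{\bar z}$. Dividing by $(K'(u))^3 u_z u_{\bar z}$ (nonzero) and invoking $K''/(K')^3 = j$ converts the vanishing of this expression to $(u_{zz} u_{\bar z}^2 + u_{\bar z\bar z} u_z^2)/(u_z u_{\bar z}) = j(u)$, which holds by the Modified NMG equation~\ref{eqn: Modified Complex Non-Trivial MGT: 2}. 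The whole argument is essentially routine bookkeeping once the $K''$ identity is in hand; the only potentially delicate point is domain compatibility, which is exactly what the hypothesis $H(I) = J$ is designed to guarantee.
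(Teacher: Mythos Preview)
Your proof is correct and follows essentially the same approach as the paper---reversing the derivation of Section~\ref{section: Step 1}---but your organization is cleaner: by isolating the identity $K''(s) = h(K(s))(K'(s))^2$ first, you obtain $f_{z\bar z} = h(f) f_z f_{\bar z}$ directly, whereas the paper assumes this relation temporarily, manipulates the minimal surface equation via an add--and--subtract trick, and only verifies the relation at the end. Both arguments reduce the minimal surface equation for $f$ to the modified equation for $u$ via the same chain-rule substitutions.
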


\begin{proof}
	The proof is basically the reversing of the derivation that led to the Modified NMG Transformation Problem \ref{eqn: Modified Complex Non-Trivial MGT: 2}. Given that \(u\) and \(j\) satisfy system \ref{eqn: Modified Complex Non-Trivial MGT: 2}, and that there exists an \(h\) such that \(j(s) = \frac{h(K(s))}{K'(s)}\) where \(K = H^{-1}\) and \(H = \int e^{-\int h}\), we substitute and simplify to obtain:
	\[
	K'(u) u_z^2 u_{\bar{z}\bar{z}} + K'(u) u_{\bar{z}}^2 u_{zz} - (h \circ K)(u) u_z u_{\bar{z}} = 0 \quad \text{and} \quad u_{z\bar{z}} = 0.
	\]
	
	Adding and subtracting \(2 {K'}^2(u) (h \circ K)(u) u_z^2 u_{\bar{z}}^2\) and multiplying through by \({K'}^2(u)\), we get:
	\begin{align*}
		&{K'}^2(u) u_z^2 \left[{K'}^2(u) (h \circ K)(u) u_{\bar{z}}^2 + K'(u) u_{\bar{z}\bar{z}}\right] \\
		&\quad\quad + {K'}^2(u) u_{\bar{z}}^2 \left[{K'}^2(u) (h \circ K)(u) u_z^2 + K'(u) u_{zz}\right] \\
		&\quad\quad\quad\quad\quad- {K'}^2(u) (h \circ K)(u) u_z u_{\bar{z}} \left[1 + 2 {K'}^2(u) u_z u_{\bar{z}}\right] = 0.
	\end{align*}
	
	Now assume temporarily that \(\frac{\Delta f}{\|\nabla f\|^2} = h(f)\). (This will be proved later.) With \(f = K(u)\), the derivatives are given by \ref{eqn: partial derivatives of f} and \ref{eqn: second partial derivatives of f}. From \(f_{z\bar{z}} = f_z f_{\bar{z}} h(f)\) and substituting \(f_z\) and \(f_{\bar{z}}\), we have \(f_{z\bar{z}} = {K'}^2(u) h(K(u)) u_z u_{\bar{z}}\). Substituting all these into the above equation yields the Minimal Surface Equation:
	\[
	f_{zz} f_{\bar{z}}^2 + f_{\bar{z}\bar{z}} f_z^2 - f_{z\bar{z}}(1 + 2 f_z f_{\bar{z}}) = 0,
	\]
	confirming that \(f\) is a minimal surface.
	
	It remains to prove that \(\frac{\Delta f}{\|\nabla f\|^2} = h(f)\). Since \(u_{z\bar{z}} = 0\) and \(u = H(f)\), we compute: \(u_z = H'(f) f_z\), \(u_{\bar{z}} = H'(f) f_{\bar{z}}\), and \(u_{z\bar{z}} = H''(f) f_z f_{\bar{z}} + H'(f) f_{z\bar{z}}\). Using \(u_{z\bar{z}} = 0\), we obtain \(H'(f) f_{z\bar{z}} + H''(f) f_z f_{\bar{z}} = 0\). Note from equation \ref{eqn: partial derivatives of f} that \(f_z, f_{\bar{z}} \neq 0\) since \(u_z, u_{\bar{z}} \neq 0\) in their domain. Also, \(H'(f)\) is never zero by definition. Dividing the equation gives: \( \frac{f_{z\bar{z}}}{f_z f_{\bar{z}}} = \frac{-H''(f)}{H'(f)}\).	Substituting \(H''\) and \(H'\) from the relation \(H = \int e^{-\int h}\), we finally obtain 
	\[
	  \frac{\Delta f}{\|\nabla f\|^2} = \frac{f_{z\bar{z}}}{f_z f_{\bar{z}}} = h(f)
	\]
\end{proof}

 What we have proved in the last proposition is that, given a solution to the Modified NMG Transformation Problem and a function \(h\) that satisfies a specific relation with the second characteristic function \(j\)—a relation that must hold if \(h\) were indeed the first characteristic function corresponding to the given  \(j\)—then there exists a solution to the original NMG Transformation Problem for which this \(h\) serves as the first characteristic function. 
 
 Next we prove that given a second characteristic function \(j\) which is never vanishing in its domain, then there exist an \(h\) satisfying the condition described in the last proposition.
 
 So suppose that we are given a function \(j: J \longrightarrow \mathbb{R}\) that is never vanishing on its domain \(J\). If there exists a function \(h: I \longrightarrow \mathbb{R}\) such that \(j(s) = \frac{h(K(s))}{K'(s)}\), where \(K = H^{-1}\) and \(H = \int e^{-\int h}\), then the functions \(j\) and \(H\) must satisfy the relations \(j(H(s)) = -H''(s)\) and \(h(s) = \frac{-H''(s)}{H'(s)}\).\\
 
 Let us now consider the equation \(j(H(s)) = -H''(s)\). Define \(j_1 = -2\int j+C_1\). Then we obtain \(j_1(H(s))= {H'}^2(s)\), which implies \(\sqrt{j_1(H(s))} = H'(s)\). Since \(H'(s) > 0\), we take the positive square root. Rearranging gives \(1 = \frac{H'(s)}{\sqrt{j_1(H(s))}}\). Now define \(j_2 = \int \frac{1}{\sqrt{j_1 + C_1}}\). Note that the integrand is positive, so \(j_2\) is strictly increasing and hence diffeomorphism. Integrating the previous equation yields \(s + C_2 = j_2(H(s))\). Since \(j_2\) is a diffeomorphism, we can invert it to obtain \(H(s) = j_3(s + C_2)\), where \(j_3 = j_2^{-1}\) is also a diffeomorphism.\\
 
Note that the function \(j_2\) is defined, by its definition, on the subinterval \(J_0 \subseteq J\) where \(\sqrt{j_1}\) is real. This subinterval depends on the choice of the integration constant \(C_1\), which should be chosen so that \(j_2\) is defined at least somewhere. For any \(s_0 \in J\), by an appropriate choice of \(C_1\), there exists a subinterval \(J_0\) of \(J\) where \(j_2\) is well-defined. Now, the domain of \(j_3\) is \(I_0 = j_2(J_0)\). Since \(H(s) = j_3(s + C_2)\), the domain of \(H\) is \(I_1 = I_0 - C_2\). Hence, the size and position of this interval \(I_1\) are determined by the integration constants \(C_1\) and \(C_2\).  Now using this \(H\), we define the first characteristic function, \(h(s)=\frac{-H''(s)}{H'(s)}\). \\

We have shown that if there exists a function \(h\) such that for a given \(j\) the relation \(j(s) = \frac{h(K(s))}{K'(s)}\) holds (at least locally), where \(K = H^{-1}\) and \(H = \int e^{-\int h}\), then \(h\) must be of the form \(h(s) = \frac{-H''(s)}{H'(s)}\), with \(H(s) = j_3(s + C_2)\), \(j_3 = j_2^{-1}\), \(j_2 = \int \frac{1}{\sqrt{j_1}}\), and \(j_1 = -2\int j + C_1\).

Now we consider the converse: given the \(h\) constructed above, Remark \ref{remark: rln bw 1st and 2nd Char Fns} tells us that many functions \(j_{\alpha}\) can correspond to this \(h\), due to the integration constants \(C_3\) and \(C_4\) that arise when recovering \(H\) from \(h\). The question is: for which choice of \(C_3\) and \(C_4\) do we recover our original \(j\) (restricted to \(J_0\))?

The answer is \(C_3 = 0\) and \(C_4 = 0\). Here is the explanation: since \(h(s) = \frac{-H''(s)}{H'(s)} = -\frac{d}{ds}[\ln(H'(s))]\), we have \(-\int h = \ln(H'(s)) + C_3\), which implies \(H(s) = \int e^{-\int h} = e^{C_3} H(s) + C_4\). For this equality to hold, \(C_3\) and \(C_4\) must be zero. With this \(H\), we can then recover \(j\) uniquely via the equation \(j(H(s)) = -H''(s)\), or equivalently, \(j(s) = \frac{h(K(s))}{K'(s)}\).
 
We summarise this into the following proposition:
\begin{proposition}
	Let \( j: J \to \mathbb{R} \) be a smooth function that is nowhere vanishing on the interval \( J \). Then there exist constants \( C_1, C_2 \in \mathbb{R} \), a subinterval \( J_0 \subseteq J \), and a function \( h: I_1 \to \mathbb{R} \) defined on an interval \( I_1 \), such that the following holds:
	
	Define successively:
	\begin{align*}
		&j_1(s) = -2 \int j(s) ds + C_1, \quad	j_2(s)= \int \frac{1}{\sqrt{j_1(s)}} ds \quad \text{(on \( J_0: \) \;\; \(j_1(s)>0\))}\\
		j_3 &= j_2^{-1},\quad\quad H(s) = j_3(s + C_2), \quad\quad h(s)= -\frac{H''(s)}{H'(s)} \quad\quad I_1 = j_2(J_0)- C_2
	\end{align*}
	
	Then with this definition the functions \(j_2\) is a diffeomorphism and hence so does \(H\), so with \( K = H^{-1} \), the function \( h \) satisfies
	\[
	j(s) = \frac{h(K(s))}{K'(s)} \quad \text{for all } s \in H(I_1) \subseteq J_0 \;\; \text{and} \quad H = \int e^{-\int h}
	\]
	Moreover, the function \( h \) constructed in this way is the unique first characteristic function (up to the choice of \( C_1, C_2 \) determining the domain) corresponding to the given second characteristic function \( j \) on the specified domain.
\end{proposition}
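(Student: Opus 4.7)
The plan is to proceed through the chain $j \rightsquigarrow j_1 \rightsquigarrow j_2 \rightsquigarrow H \rightsquigarrow h$, verifying at each stage that the next object is well defined and smooth, that $j_2$ (and hence $H$) is a diffeomorphism onto its image, and that the resulting $h$ satisfies the two asserted identities. Since the construction in the statement essentially prescribes $h$ explicitly, the proof amounts to checking well-definedness together with one central calculation, namely the identity $H''(s) = -j(H(s))$, from which both conclusions are immediate.

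For well-definedness, I would first observe that since $j$ is smooth and nowhere vanishing on the connected interval $J$, it has constant sign there, so $-2\int j$ is strictly monotone. Choosing $C_1$ so that $j_1(s_0) > 0$ at some prescribed $s_0 \in J$ yields, by continuity, an open subinterval $J_0 \subseteq J$ on which $j_1 > 0$. On $J_0$ the function $1/\sqrt{j_1}$ is smooth and strictly positive, so $j_2$ has strictly positive derivative and is a diffeomorphism onto $j_2(J_0)$. Consequently $j_3 = j_2^{-1}$ is a diffeomorphism, and so is $H(s) = j_3(s+C_2)$ on $I_1 = j_2(J_0) - C_2$, with inverse $K = H^{-1}$ defined on $J_0 = H(I_1)$.

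The heart of the proof is the identity $H''(s) = -j(H(s))$. Differentiating $j_2(j_3(t)) = t$ and using $j_2'(\tau) = 1/\sqrt{j_1(\tau)}$ gives $j_3'(t) = \sqrt{j_1(j_3(t))}$, so $H'(s) = \sqrt{j_1(H(s))}$ and thus $(H'(s))^2 = j_1(H(s))$. Differentiating this equation, cancelling the nonvanishing factor $H'(s)$, and using $j_1' = -2j$ then yields $H''(s) = -j(H(s))$. Substituting into $h = -H''/H'$ gives $h(s) = j(H(s))/H'(s)$, and replacing $s$ by $K(s)$ together with $K'(s) = 1/(H' \circ K)(s)$ produces the required relation $j(s) = h(K(s))/K'(s)$ on $H(I_1)$. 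The companion formula $H = \int e^{-\int h}$ follows from rewriting $h = -(\ln H')'$, integrating to obtain $\ln H'(s) = -\int h + \mathrm{const}$, and absorbing the additive constant into the antiderivative so that $H'(s) = e^{-\int h}$.

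For uniqueness on the specified domain, any alternative $\tilde h$ satisfying both relations must obey $\tilde h = -\tilde H''/\tilde H'$, forcing $\tilde H$ to satisfy the same second-order ODE $\tilde H''(s) = -j(\tilde H(s))$ derived above, whose two integration constants are precisely $C_1$ (entering through $(\tilde H')^2 = \tilde j_1 \circ \tilde H$) and $C_2$ (the translation in $\tilde H(s) = \tilde j_3(s+C_2)$). The main obstacle I anticipate is the bookkeeping of these constants: ensuring that the antiderivatives used in the construction of $h$ and in the formula $H = \int e^{-\int h}$ are aligned exactly, rather than up to an affine rescaling $H \mapsto \lambda H + \mu$, is precisely the content of the author's earlier remark that the constants $C_3, C_4$ arising when recovering $H$ from $h$ must both vanish. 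Once this alignment is handled, uniqueness on the given domain follows immediately.
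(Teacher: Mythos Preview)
Your proposal is correct and follows essentially the same route as the paper. The paper's argument, given in the discussion immediately preceding the proposition, also hinges on the identity $j(H(s)) = -H''(s)$, obtained by integrating to $(H')^2 = j_1 \circ H$ and then differentiating back; your derivation of both conclusions from this identity, and your handling of the constant alignment via the $C_3 = C_4 = 0$ observation, match the paper exactly.
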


We combine all the results in this subsection in the following theorem:\\
 
\begin{theorem}[Equivalence of \ref{eqn: Modified Complex Non-Trivial MGT: 2} and \ref{eqn: Complex Non-Trivial MGT: 2}]\label{thm: Equivalence thm}
	The following systems are equivalent through explicit transformations:
	
	\textbf{System 1} The pair \((f,h)\) solves the Non-Trivial Minimal Graph Transformation Problem \ref{eqn: Complex Non-Trivial MGT: 2}, where:
	\begin{itemize}
		\item \(f:\Omega\to\mathbb{R}\) is a minimal graph surface with \(f_z,f_{\bar{z}}\neq 0\)
		\item \(h:I\to\mathbb{R}\) is the first characteristic function, non-vanishing and sign-definite on \(I=\operatorname{Range}(f)\)
	\end{itemize}
	
	\textbf{System 2} The pair \((u,j)\) solves the Modified Non-Trivial Minimal Graph Transformation Problem \ref{eqn: Modified Complex Non-Trivial MGT: 2}, where:
	\begin{itemize}
		\item \(u:\Omega\to\mathbb{R}\) is harmonic with \(u_z,u_{\bar{z}}\neq 0\)
		\item \(j:J\to\mathbb{R}\) is the second characteristic function, non-vanishing and sign-definite on \(J=u(\Omega)\)
	\end{itemize}
	
	The equivalence is established through:
	
	\textbf{Forward Direction (1 $\Rightarrow$ 2):} Given $(f,h)$, define
	\[
	H(t)=\int e^{-\int h},\quad K=H^{-1},\quad u=H\circ f,\quad j(s)=\frac{h(K(s))}{K'(s)}
	\]
	Then $(u,j)$ solves System 2 with $j$ having the same sign as $h$.
	
	\textbf{Converse Direction (2 $\Rightarrow$ 1):} Given $(u,j)$, construct $h$ as follows. Choose constants $C_1,C_2\in\mathbb{R}$ and subinterval $J_0\subseteq J$ such that the following construction is valid:
	\begin{align*}
		&j_1(s)=-2\int j(s)ds+C_1,\quad j_2(s)=\int\frac{1}{\sqrt{j_1(s)}}ds \quad\text{(on $J_0$ where $j_1(s)>0$)}\\
		&j_3=j_2^{-1},\quad H(s)=j_3(s+C_2),\quad h(s)=-\frac{H''(s)}{H'(s)}, \quad \text{where}\;j_2,\,j_3,\,H,\,\text{are diffeomorphisms}
	\end{align*}
	Then for $f=K(u)$ with $K=H^{-1}$ and domain of definition of \(h\) being \(I_1 = j_2(J_0)- C_2\), the pair $(f,h)$ solves System 1 with $h$ having the same sign as $j$ on the domain.\\
	
	Moreover, the constants $C_1,C_2$ can be chosen so that the solution \(f,h\) of \ref{eqn: Complex Non-Trivial MGT: 2} corresponds to a solution \(u_0,j_0\) of \ref{eqn: Modified Complex Non-Trivial MGT: 2} that is defined on a subinterval \(J_0 \subseteq J\) of desirable size, centered around any  desired point \(s_0\in J\), and this correspondence is unique up to this choice of constants determining the domain of definition.
\end{theorem}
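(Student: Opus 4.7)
The plan is to assemble the theorem by combining the computation in Section \ref{section: Step 1} (which already supplied the forward direction) with the two propositions proved in Section \ref{section: Step 2} (which together supply the converse). Each ingredient has already been worked out separately under the domain assumptions of \ref{section: assumptions} and \ref{section: assumption 2}, so the task reduces to verifying that they glue into a single equivalence and that the freedom in the construction matches the stated domain-flexibility and uniqueness clauses.

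For the forward implication I would simply invoke the derivation of system \ref{eqn: Modified Complex Non-Trivial MGT: 2} from \ref{eqn: Complex Non-Trivial MGT: 2} carried out in Section \ref{section: Step 1}: since $H'=e^{-\int h}>0$, the map $H$ is a diffeomorphism from $I$ onto $J=H(I)$ with smooth inverse $K$; setting $u=H\circ f$ converts $f_{z\bar z}/(f_zf_{\bar z})=h(f)$ into $u_{z\bar z}=0$, and substituting $f_z=K'(u)u_z$, $f_{\bar z}=K'(u)u_{\bar z}$ into the minimal surface equation produces the first equation of \ref{eqn: Modified Complex Non-Trivial MGT: 2} with $j(s)=h(K(s))/K'(s)$. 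Non-vanishing of $u_z=H'(f)f_z$ and $u_{\bar z}=H'(f)f_{\bar z}$ follows from $H'>0$ and $f_z,f_{\bar z}\neq 0$, while sign-definiteness of $j$ is immediate from $K'>0$ and the sign-definiteness of $h$.

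For the converse implication I would apply the second proposition of Section \ref{section: Step 2} to the given $j$, producing constants $C_1,C_2$, a subinterval $J_0\subseteq J$ on which $j_1>0$, and a candidate first characteristic function $h:I_1\to\mathbb{R}$ with $I_1=j_2(J_0)-C_2$ satisfying $j(s)=h(K(s))/K'(s)$ and $H=\int e^{-\int h}$. The first proposition of Section \ref{section: Step 2} then guarantees that $f=K\circ u$, restricted to the open subdomain $\Omega_0=u^{-1}(J_0)\subseteq\Omega$, yields a solution $(f,h)$ of System~1 with $f_z,f_{\bar z}\neq 0$ (because $f_z=K'(u)u_z$ and $K'>0$). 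To realise the flexibility claim, given any prescribed $s_0\in J$ one chooses $C_1$ so that $j_1(s_0)>0$, which by continuity is possible on a neighbourhood of $s_0$ of any size small enough for $j_1$ to remain positive; the constant $C_2$ then translates the resulting interval $I_1$ to a convenient location.

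The main obstacle is the uniqueness assertion, which requires tracking how much genuine freedom remains beyond the $(C_1,C_2)$ used to delimit $J_0$ and $I_1$. Here I would use Remark \ref{remark: rln bw 1st and 2nd Char Fns}: a diffeomorphism $H$ uniquely determines $j$ via $j(H(s))=-H''(s)$, so any additional ambiguity could only enter through the two integration constants $C_3,C_4$ that arise when recovering $H$ from $h$ by $H=\int e^{-\int h}$. Demanding that this reconstructed $H$ return the given $j$ forces $e^{C_3}H+C_4\equiv H$ on $I_1$, hence $C_3=C_4=0$, pinning down $h$ uniquely up to the original choice of $C_1,C_2$. Combining this with the two implications above closes the equivalence and gives the theorem in the stated form.
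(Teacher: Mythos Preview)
Your proposal is correct and follows essentially the same approach as the paper: the theorem is presented there as a synthesis of the derivation in Section~\ref{section: Step 1} (forward direction) together with the two propositions of Section~\ref{section: Step 2} (the first giving $(f,h)$ from $(u,j)$ once a suitable $h$ is supplied, the second constructing such an $h$ from $j$ via $j_1,j_2,j_3$), and the uniqueness clause is handled exactly by the $C_3=C_4=0$ argument you cite from the discussion preceding the second proposition.
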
  

\begin{remark}
	The algorithm given in this theorem is exhaustive: any solution \((u,j)\) of System~\ref{eqn: Modified Complex Non-Trivial MGT: 2} can be obtained via the forward direction (as described in the theorem) from some solution \((f,h)\) of System~\ref{eqn: Complex Non-Trivial MGT: 2}. Similarly, any solution \((f,h)\) of System~\ref{eqn: Complex Non-Trivial MGT: 2} can be obtained via the converse direction from some solution \((u,j)\) of System~\ref{eqn: Modified Complex Non-Trivial MGT: 2}.
\end{remark}

Due to Theorem \ref{thm: Equivalence thm}, it is sufficient to solve the Modified NMG Transformation Problem \ref{eqn: Modified Complex Non-Trivial MGT: 2} in order to find all minimal graph surfaces that admit a minimal graph transformation. We now apply a technique, which we will call \textbf{Weakening}, to obtain a system that is weaker than the Modified NMG Transformation Problem \ref{eqn: Modified Complex Non-Trivial MGT: 2}. This means the new system may contain more solutions than the original one. We will solve this weaker system and subsequently determine which of its solutions satisfy our original system.

\subsection{Step 3: Weakening of the system \ref{eqn: Modified Complex Non-Trivial MGT: 2}}
Let us begin with the Modified NMG Transformation Problem \ref{eqn: Modified Complex Non-Trivial MGT: 2}:
\[
u_{z\bar{z}} = 0 \quad \text{and} \quad \frac{u_z^2 u_{\bar{z}\bar{z}} + u_{\bar{z}}^2 u_{zz}}{u_z u_{\bar{z}}} = j(u).
\]

We are given that \(u_z\), \(u_{\bar{z}}\), and \(j\) are non-vanishing functions in their domains. Since \(u_{z\bar{z}} = 0\), \(u\) is harmonic and can be written as \(u(z,\bar{z}) = A(z) + \overline{A(z)}\). Let \(\overline{A(z)} = B(\bar{z})\). Then we have:
\[
u_z = A'(z), \quad u_{\bar{z}} = B'(\bar{z}), \quad u_{zz} = A''(z), \quad u_{\bar{z}\bar{z}} = B''(\bar{z}).
\]
Substituting these into the second equation yields:
\[
\frac{A''(z) [B'(\bar{z})]^2 + B''(\bar{z}) [A'(z)]^2}{A'(z) B'(\bar{z})} = j(u).
\]

Differentiating \(j(u)\) with respect to \(z\), we obtain:
\[
[j(u)]_z = j'(u) A'(z) = \frac{[B'(\bar{z})]^3 \left(A'''(z) A'(z) - [A''(z)]^2\right) + [B''(\bar{z}) B'(\bar{z})] \left(A''(z) [A'(z)]^2\right)}{[A'(z) B'(\bar{z})]^2}.
\]
Since \([j(u)]_z B'(\bar{z}) = j'(u) A'(z) B'(\bar{z})\), it follows that:
\begin{equation}
	\label{eqn: weakening equation 1}
	j'(u) A'(z) B'(\bar{z}) = \frac{[B'(\bar{z})]^4 \left(A'''(z) A'(z) - [A''(z)]^2\right) + [B''(\bar{z}) \cdot [B'(\bar{z})]^2] \left(A''(z) [A'(z)]^2\right)}{[A'(z) B'(\bar{z})]^2}.
\end{equation}

A similar calculation can be performed by differentiating \(j(u)\) with respect to \(\bar{z}\):
\[
[j(u)]_{\bar{z}} = j'(u) B'(\bar{z}) = \frac{[A'(z)]^3 \left(B'''(\bar{z}) B'(\bar{z}) - [B''(\bar{z})]^2\right) + [A''(z) A'(z)] \left(B''(\bar{z}) [B'(\bar{z})]^2\right)}{[A'(z) B'(\bar{z})]^2}.
\]
This implies:
\begin{equation}
	\label{eqn: weakening equation 2}
	j'(u) A'(z) B'(\bar{z}) = \frac{[A'(z)]^4 \left(B'''(\bar{z}) B'(\bar{z}) - [B''(\bar{z})]^2\right) + [A''(z) [A'(z)]^2] \left(B''(\bar{z}) [B'(\bar{z})]^2\right)}{[A'(z) B'(\bar{z})]^2}.
\end{equation}

We observe that the left-hand sides of \eqref{eqn: weakening equation 1} and \eqref{eqn: weakening equation 2} are identical. Therefore, we equate the right-hand sides. Since the denominators are the same, we cancel them. The second terms in the numerators are also identical, so we subtract them. This yields:
\[
[A'(z)]^4 \left(B'''(\bar{z}) B'(\bar{z}) - [B''(\bar{z})]^2\right) = [B'(\bar{z})]^4 \left(A'''(z) A'(z) - [A''(z)]^2\right).
\]

Rearranging, we obtain:
\[
\frac{B'''(\bar{z}) B'(\bar{z}) - [B''(\bar{z})]^2}{[B'(\bar{z})]^4} = \frac{A'''(z) A'(z) - [A''(z)]^2}{[A'(z)]^4}.
\]

In this equation, the left-hand side is a function of \(\bar{z}\) alone, and the right-hand side is a function of \(z\) alone. Therefore, both sides must be equal to a constant \(k\). That is,
\begin{equation}
	\label{eqn: weakened eqn 1}
	\boxed{\frac{B'''(\bar{z}) B'(\bar{z}) - [B''(\bar{z})]^2}{[B'(\bar{z})]^4} = k = \frac{A'''(z) A'(z) - [A''(z)]^2}{[A'(z)]^4}}
\end{equation}

Now note that since \(u = A(z) + B(\bar{z})\), where \(B(\bar{z}) = \overline{A(z)}\) and \(A(z)\) is a holomorphic function of \(z\), we have:
\[
A(z) = \sum_{k=0}^{\infty} a_k z^k, \qquad B(\bar{z}) = \overline{A(z)} = \sum_{k=0}^{\infty} \bar{a}_k \bar{z}^k.
\]

From this, we obtain \(B'(\bar{z}) = \overline{A'(z)}\), \(B''(\bar{z}) = \overline{A''(z)}\), and \(B'''(\bar{z}) = \overline{A'''(z)}\). Substituting into Equation \ref{eqn: weakened eqn 1} gives:
\[
\frac{A'''(z) A'(z) - [A''(z)]^2}{[A'(z)]^4} = k,
\qquad
\overline{\left( \frac{A'''(z) A'(z) - [A''(z)]^2}{[A'(z)]^4} \right)} = k.
\]

This shows that \(k\) must be real, since it is equal to its own conjugate. Therefore, we only need to solve the equation for \(A(z)\). Rearranging, we get:
\[
\frac{A'''(z) A'(z) - [A''(z)]^2}{[A'(z)]^2} = k [A'(z)]^2, \quad \text{for a real } k.
\]

Observe that the left-hand side is \(\left( \frac{A''(z)}{A'(z)} \right)_z = [\ln(A'(z))]_{zz}\). Hence, we obtain, what we call, \textbf{Weakened Equation for System \ref{eqn: Modified Complex Non-Trivial MGT: 2}}:
\begin{equation}
	\label{eqn: weakened eqn 2}
	\boxed{[\ln(A'(z))]_{zz} = k  [A'(z)]^2}
	\qquad
	\boxed{\text{for a real } k}.
\end{equation}

The strategy now is to solve Equation \ref{eqn: weakened eqn 2} explicitly for the following three cases: \(k = 0\), \(k < 0\), and \(k > 0\). This will be carried out in the following section.

\section{Solution to Weakened Equation \ref{eqn: weakened eqn 2} : Case \(k=0\)}
Before beginning the main calculation, let us recall the properties of our system: \(u : \Omega \longrightarrow \mathbb{R}\) is a harmonic function satisfying the Modified NMG Transformation Problem \ref{eqn: Modified Complex Non-Trivial MGT: 2}, and its complex derivatives \(u_z\) and \(u_{\bar{z}}\) are non-vanishing in \(\Omega\). The function \(j : J = u(\Omega) \longrightarrow \mathbb{R}\) is also non-vanishing on its domain.\\

Let us begin by substituting \(k = 0\) into Equation \ref{eqn: weakened eqn 2}, which yields \([\ln(A'(z))]_{zz} = 0\). Since \(A'(z)\) is a function of \(z\) alone, this implies \(\ln(A'(z)) = a z + b\) for some complex constants \(a\) and \(b\). Exponentiating gives \(A'(z) = e^{a z + b}\). This leads to the following two cases: \(a = 0\) and \(a \neq 0\).

\subsection{Subcase 1: \(a = 0\)}
In this case we get \(A'(z) = e^{b}\), so \(A(z) = e^{b} z + c\). Hence \(B(\bar{z}) = \overline{A(z)} = e^{\bar{b}} \bar{z} + \bar{c}\), and \(u = e^{b} z + e^{\bar{b}} \bar{z} + \bar{c} + c\). We calculate: \(u_z = e^{b}\), \(u_{\bar{z}} = e^{\bar{b}}\), and \(u_{zz} = 0 = u_{\bar{z}\bar{z}}\). Substituting into Equation \ref{eqn: Modified Complex Non-Trivial MGT: 2} gives \(j(u) = 0\), which contradicts the hypothesis that \(j\) is non-vanishing. Therefore, \(a \neq 0\).

\subsection{Subcase 2: \(a \neq 0\)}

\textbf{Finding solutions \((u,j)\) to Modified Nontrivial Minimal Graph Transformation Problem \ref{eqn: Modified Complex Non-Trivial MGT: 2} : } We have \(A'(z) = e^{a z + b}\), hence \(A(z) = \frac{1}{a} e^{a z + b} + c_0\) for some complex constants \(a\), \(b\), and \(c_0\). This gives \(B(\bar{z}) = \overline{A(z)} = \frac{1}{\bar{a}} e^{\bar{a} \bar{z} + \bar{b}} + \bar{c_0}\). Therefore, the function \(u\) is:
\begin{equation}
	\label{eqn: u, k=0}
	\boxed{
		u = \frac{1}{a} e^{a z + b} + c_0 + \frac{1}{\bar{a}} e^{\bar{a} \bar{z} + \bar{b}} + \bar{c_0} = 2\Re\left(\frac{1}{a} e^{a z + b}\right) + 2\Re(c_0)
	}
\end{equation}
for some complex constants \(a\), \(b\), and \(c_0\).

Now we find the second characteristic function \(j\). We have \(u_z = A'(z) = e^{a z + b}\), \(u_{\bar{z}} = B'(\bar{z}) = e^{\bar{a} \bar{z} + \bar{b}}\), \(u_{zz} = a e^{a z + b}\), and \(u_{\bar{z}\bar{z}} = \bar{a} e^{\bar{a} \bar{z} + \bar{b}}\). Substituting into Equation \ref{eqn: Modified Complex Non-Trivial MGT: 2}:
\[
\frac{u_z^2 u_{\bar{z}\bar{z}} + u_{\bar{z}}^2 u_{zz}}{u_z u_{\bar{z}}} = \bar{a} e^{a z + b} + a e^{\bar{a} \bar{z} + \bar{b}} = a \bar{a} \left[u - (c_0 + \bar{c_0})\right] = |a|^2 \left[u - 2\Re(c_0)\right] = j(u).
\]
Hence, the second characteristic function is:
\begin{equation}
	\label{eqn: j, k=0}
	\boxed{j(s) = |a|^2 \left[s - 2\Re(c_0)\right]}
\end{equation}
Combining both results we get the pair \((u,j)\) which satisfy the Modified NMG Transformation Problem:
\[
\boxed{u = 2\Re\left(\tfrac{1}{a} e^{a z + b}\right) + 2\Re(c_0)} \quad\text{and}\quad \boxed{j(s) = |a|^2 \left[s - 2\Re(c_0)\right]}
\]
We observe that on the open intervals \(J = (\beta, \infty)\) and \(J = (-\infty, \beta)\), the second characteristic function \(j\) is non-vanishing. Furthermore, the functions \(u_z\) and \(u_{\bar{z}}\) are non-vanishing on all of \(\mathbb{C}\). Therefore, for \(\Omega_0 = u^{-1}(\beta, \infty)\) or \(\Omega_0 = u^{-1}(-\infty, \beta)\), we have \(u_z, u_{\bar{z}} \neq 0\) on \(\Omega_0\), and \(u(\Omega_0) = J\). Thus, the pair \((u, j)\) is the solution of the Modified NMG Transformation Problem, with \(u : \Omega_0 \to \mathbb{R}\) and \(j : J \to \mathbb{R}\), satisfies the non-vanishing conditions in their respective domains.

Hence, \((u, j)\) satisfies the hypotheses of Theorem \ref{thm: Equivalence thm}. So there exist a Minimal Graph Surface \(f\) and First Characteristic Function \(h\) associated to it, which solves the NMG Transformation Problem. We now apply the converse direction of Theorem \ref{thm: Equivalence thm} to construct all posssible, open subinterval \(J_0 \subseteq J\), pair \((f, h)\) that solves the NMG Transformation Problem \ref{eqn: Complex Non-Trivial MGT: 2} on the respectively on domains \(\Omega = u^{-1}(J_0)\) and \(I = f(\Omega)\), such that the corresponding pair \((u_0, j_0)\) for the modified problem is precisely \(u_0 = u|_{\Omega}\) and \(j_0 = j|_{J_0}\).

We now proceed to determine the explicit form of the minimal surface \(f\) and the associated first characteristic function \(h\).\\

\textbf{Finding solutions \((f,h)\) to the Nontrivial Minimal Graph Transformation Problem \ref{eqn: Complex Non-Trivial MGT: 2}:} We begin with the construction described in Theorem \ref{thm: Equivalence thm}. We have \(j(s) = |a|^2 [s - 2\Re(c_0)]\). Then, \(j_1 = -2\int j\, ds + C_1= -2|a|^2 \left[\frac{s^2}{2} - 2\Re(c_0)s\right] + C_1\), which can be rewritten as: \(j_1 = -|a|^2 s^2 + 4|a|^2 \Re(c_0) s + C_1\). The constant \(C_1\) must be chosen so that \(j_1(s) > 0\) on some interval. Since the leading coefficient \(-|a|^2\) is negative, \(j_1(s) \to -\infty\) as \(s \to \pm\infty\). Therefore, if such an interval \(J_0\) exists, it must be bounded. Next, we compute \(j_2(s) = \int \frac{1}{\sqrt{-|a|^2 s^2 + 4|a|^2 \Re(c_0) s + C_1}}\, ds + C_2\). Again, the leading coefficient is negative, and the discriminant of this quadratic must be positive for \(J_0\) to exist, which is ensured by an appropriate choice of real \(C_1\). To evaluate this integral, we refer to \cite{JD}, page 172, section 4.3.4.1, equation 1. We get the formula: \(j_2(s)=\frac{-1}{|a|} \sin^{-1}\big[ \frac{4|a|^2 \Re(c_0)-2|a|^2s}{\sqrt{16|a|^4 \Re^2(c_0)+4|a|^2 C_1}}\big]\).

Now substitute the relation \(j_2(H(s)) = s + C_2\), yielding the following equation: \(\frac{-1}{|a|} \sin^{-1}\big[ \frac{4|a|^2 \Re(c_0)-2|a|^2H(s)}{\sqrt{16|a|^4 \Re^2(c_0)+4|a|^2 C_1}}\big]=s+C_2\). This immediately give us the following: \(H(s) = \sin\left(|a|(s + C_2)\right) \tfrac{\sqrt{4|a|^2 \Re^2(c_0) + C_1}}{|a|} + 2\Re(c_0)\). Note that by appropriately choosing the integration constant \(C_1\), we can vary the value of \(\tfrac{\sqrt{4|a|^2 \Re^2(c_0) + C_1}}{|a|}\) to any positive real number. Let us denote this positive constant as \(C_3^2\), giving us \(H(s) = C_3^2 \sin\left(|a|(s + C_2)\right) + 2\Re(c_0)\). Let us find \(H^{-1}\). We have \(s = \frac{1}{|a|} \sin^{-1}\left[\frac{H - 2\Re(c_0)}{C_3^2}\right] - C_2\). Therefore, \(K(s) = H^{-1}(s) = \frac{1}{|a|} \sin^{-1}\left[\frac{s - 2\Re(c_0)}{C_3^2}\right] - C_2\). We now use the formulas \(f=K(u)\) and \(h=\frac{-H''}{H'}\) to finally find the Minimal Graph Surface \(f\) and the first characteristic function \(h\) which satisfies the NMG Transformation Problem.
\[
  f=K(u)=\frac{1}{|a|} \sin^{-1}\Big[\frac{u - 2\Re(c_0)}{C_3^2}\Big]-C_2=\frac{1}{|a|} \sin^{-1}\Big[\frac{2\Re\big(\frac{1}{a} e^{a z + b}\big)}{C_3^2}\Big]-C_2
\]
\[
  h=\frac{-H''}{H'}=\frac{C_3^2|a|^2\sin\big(|a|(s + C_2)\big)}{C_3^2|a|\cos\big(|a|(s + C_2)\big)}=|a|\tan\big(|a|(s + C_2)\big)
\]
So the pair \(f,h\) is precisely:
\begin{equation}
	\label{eqn: soln k=0}
	\boxed{f=\frac{1}{|a|} \sin^{-1}\Big[\frac{2\Re\big(\frac{1}{a} e^{a z + b}\big)}{C_3^2}\Big]-C_2}\quad\text{and}\quad\boxed{h=|a|\tan\big(|a|(s + C_2)\big)}
\end{equation}

The Minimal Graph Surface \(f\) is infact the Scherk’s First Surface, We will prove that now:\\

\textbf{Proving that \(f\) is Scherk’s First Surface:} In this section, we use \(x_1, x_2, x_3\) to represent the \(x, y, z\) components, respectively, to avoid confusion with the complex variable \(z = x_1 + i x_2\). Let \(a = a_1 + i a_2\) and \(b = b_1 + i b_2\). Then \(a z + b = (a_1 x_1 - a_2 x_2 + b_1) + i (a_2 x_1 + a_1 x_2 + b_2)\). Define \(A = a_1 x_1 - a_2 x_2 + b_1\) and \(B = a_2 x_1 + a_1 x_2 + b_2\), so that \(e^{a z + b} = e^{A} (\cos B + i \sin B)\). Then \(2\Re\left(\frac{1}{a} e^{a z + b}\right) = 2\Re\left(\frac{1}{a_1 + i a_2} e^{A} (\cos B + i \sin B)\right) = \frac{2 e^{A}}{a_1^2 + a_2^2} (a_1 \cos B + a_2 \sin B)\). This implies \(f = \frac{1}{\sqrt{a_1^2 + a_2^2}} \sin^{-1}\left[\frac{2 e^{A}}{C_3^2 (a_1^2 + a_2^2)} (a_1 \cos B + a_2 \sin B)\right] - C_2\). Let \(\theta = \tan^{-1}(a_2 / a_1)\). Then \(f = \frac{1}{\sqrt{a_1^2 + a_2^2}} \sin^{-1}\Big[\frac{2 e^{A}}{C_3^2 \sqrt{a_1^2 + a_2^2}} \cos[B - \theta]\Big] - C_2\). Since \(f(x_1, x_2) = x_3\) represents the \(z\)-component of the minimal graph surface, we have \(\sin\left[\sqrt{a_1^2 + a_2^2} (x_3 + C_2)\right] = \frac{2 e^{A}}{C_3^2 \sqrt{a_1^2 + a_2^2}} \cos[B - \theta]\). Now set \(C_4 = C_2 - \frac{\pi}{2\sqrt{a_1^2 + a_2^2}}\) and rearrange, then apply logarithm (\(\ln\)) on both sides to get \(\ln\Big[\frac{\cos\left[\sqrt{a_1^2 + a_2^2} (x_3 + C_4)\right]}{\cos[B - \theta]}\Big] = A + \ln\Big[\frac{2}{C_3^2 \sqrt{a_1^2 + a_2^2}}\Big]\). Substituting back the values of \(A\) and \(B\) yields:
\[
 \boxed{ \ln\Big[\frac{\cos[ x_3\sqrt{a_1^2 + a_2^2} + C_4\sqrt{a_1^2 + a_2^2}\,]}{\cos[a_2 x_1 + a_1 x_2 + b_2 - \theta]}\Big] = a_1 x_1 - a_2 x_2 + b_1 + \ln\Big[\tfrac{2}{C_3^2 \sqrt{a_1^2 + a_2^2}}\Big]}.
\]

Set the constants: \(C_Z = C_4 \sqrt{a_1^2 + a_2^2}\), \(C_Y = b_2 - \theta\), and \(C_X = b_1 + \ln\left[\frac{2}{C_3^2 \sqrt{a_1^2 + a_2^2}}\right]\). Define the variables: \(Z = x_3 \sqrt{a_1^2 + a_2^2}\), \(X = a_2 x_1 + a_1 x_2\), and \(Y = a_1 x_1 - a_2 x_2\). We rewrite the above equation as: \(\boxed{Y + C_Y = \ln\left[\frac{\cos(Z + C_Z)}{\cos(X + C_X)}\right]}\)\\

Clearly, the set of all \((X, Y, Z)\) satisfying this equation forms a Scherk’s First Surface translated by the constant vector \((C_X, C_Y, C_Z)\). The aim of this section is to determine whether the set of all \((x_1, x_2, x_3)\) satisfying this equation also forms a Scherk’s First Surface. To analyze this, consider the following matrix relation.

\[
\begin{pmatrix} 
	X\\
	Y \\
	Z
\end{pmatrix}
=
\begin{pmatrix} 
	a_2 & a_1 & 0 \\
	a_1 & -a_2 & 0 \\
	0 & 0 & \sqrt{a_1^2+a_2^2}
\end{pmatrix}
\begin{pmatrix} 
	x_1 \\
	x_2 \\
	x_3
\end{pmatrix}
= \sqrt{a_1^2+a_2^2}\,
\begin{pmatrix} 
	\dfrac{a_2}{\sqrt{a_1^2+a_2^2}} & \dfrac{a_1}{\sqrt{a_1^2+a_2^2}} & 0 \\
	\dfrac{a_1}{\sqrt{a_1^2+a_2^2}} & -\dfrac{a_2}{\sqrt{a_1^2+a_2^2}} & 0 \\
	0 & 0 & 1
\end{pmatrix}
\begin{pmatrix} 
	x_1 \\
	x_2 \\
	x_3
\end{pmatrix}.
\]

This shows that the coordinates \((X, Y, Z)\) are obtained from \((x_1, x_2, x_3)\) through a transformation involving an \textbf{orthogonal matrix} followed by a \textbf{uniform scaling} by the factor \(\sqrt{a_1^2 + a_2^2}\).

\[
\begin{pmatrix} 
	x_1 \\
	x_2 \\
	x_3
\end{pmatrix}
= \frac{1}{\sqrt{a_1^2+a_2^2}}\,
\begin{pmatrix} 
	\dfrac{a_2}{\sqrt{a_1^2+a_2^2}} & \dfrac{a_1}{\sqrt{a_1^2+a_2^2}} & 0 \\
	\dfrac{a_1}{\sqrt{a_1^2+a_2^2}} & -\dfrac{a_2}{\sqrt{a_1^2+a_2^2}} & 0 \\
	0 & 0 & 1
\end{pmatrix}
\begin{pmatrix} 
	X \\
	Y \\
	Z
\end{pmatrix}.
\]

Thus, \((x_1, x_2, x_3)\) is obtained from \((X, Y, Z)\) by applying an \textbf{orthogonal transformation} (which may correspond to a rotation or reflection) and then \textbf{scaling} by the factor \(\frac{1}{\sqrt{a_1^2 + a_2^2}}\).

Since \((X, Y, Z)\) represents a \textbf{Scherk’s surface}, it follows that \((x_1, x_2, x_3)\) also defines a \textbf{Scherk’s surface}, differing only by a rigid motion and uniform scaling. Hence, the function \(f\) indeed represents a Scherk’s surface.\\

\textbf{Finding Non-Trivial Minimal Graph Transformations.}  
We can directly determine the NMG Transformations \(g\) by applying Formula~\ref{eqn: conv h to g}, we determine \(g\) to be given by  \(g = \pm \int \frac{1}{\sqrt{1 \pm C^2 e^{2\int h\, dt}}}\, dt + D\). First, we calculate \(\int h(s)\, ds\). Using the identity \(\int \tan x \, dx = -\ln|\cos x|\) and noting that \(h = |a|\tan[|a|(s + C_2)]\), we have \(\int h(s)\, ds = \int |a|\tan[|a|(s + C_2)]\, ds = -\ln\!\big|\cos[|a|(s + C_2)]\big|\). Next, we compute \(e^{2\int h(s)\, ds}\) as follows: \(e^{2\int h(s)\, ds} = e^{-2\ln\!\big|\cos[|a|(s + C_2)]\big|} = \frac{1}{\cos^2[|a|(s + C_2)]}\). Now, we compute \(g\) as follows:  
\(g = \pm \scaleobj{1}{\int} \scaleobj{0.8}{\frac{1}{\sqrt{1\, \pm\, \tfrac{C^2}{\cos^2[|a|(s + C_2)]}}}\, ds + D}.\)  This can be rewritten as  \(g = \pm \int \frac{\cos[|a|(s + C_2)]}{\sqrt{\cos^2[|a|(s + C_2)] \pm C^2}}\, ds + D = \pm \int \frac{\cos[|a|(s + C_2)]}{\sqrt{1 \pm C^2 - \sin^2[|a|(s + C_2)]}}\, ds + D.\) Let us make the substitution \(t = \sin[|a|(s + C_2)]\), giving \(dt = |a|\cos[|a|(s + C_2)]\, ds\), or equivalently, \(\frac{dt}{|a|} = \cos[|a|(s + C_2)]\, ds.\)  
Thus, we have  
\(g = \pm \frac{1}{|a|} \int \frac{1}{\sqrt{1 \pm C^2 - t^2}}\, dt,\)  
evaluated at \(t = \sin[|a|(s + C_2)].\) Notice that \(g\) has a real solution only if \(1 \pm C^2 > 0.\) We solve the above integral using the well-known standard result \(\int \frac{1}{\sqrt{c - w^2}}\, dw = \sin^{-1}\!\left(\frac{w}{\sqrt{c}}\right) + \text{constant}.\) Hence, we obtain  
\(g(s) = \pm \tfrac{1}{|a|}\sin^{-1}\big[\tfrac{\sin[|a|(s + C_2)]}{\sqrt{1 \pm C^2}}\big] + D.\)

Since \(\sqrt{1 \pm C^2}\) can be either positive or negative, if it is negative, we factor out the \(-1\) from \(\sin^{-1}\) (as it is an odd function), which does not change the overall sign outside the integral because of the existing \(\pm\) symbol. Therefore, without loss of generality, we may assume that \(\sqrt{1 \pm C^2} > 0\), and denote it by \(C_5^2.\) Hence, we finally obtain  
\[
\boxed{g(s) = \pm \frac{1}{|a|}\sin^{-1}\!\left(\frac{\sin[|a|(s + C_2)]}{C_5^2}\right) + D.}
\]\\

\textbf{Finding the Transformed Minimal Surface.}  
We substitute \(f\) in place of \(s\), giving the expression for the transformed minimal surface:  
\(g(f) = \pm \frac{1}{|a|}\sin^{-1}\!\left(\frac{\sin[|a|(f + C_2)]}{C_5^2}\right) + D 
= \pm \frac{1}{|a|}\sin^{-1}\!\Big(\frac{2\Re\left[\frac{1}{a} e^{az + b}\right]}{(C_3 \cdot C_5)^2}\Big) + D.\) Hence, we obtain  
\[
\boxed{g(f) = \pm \tfrac{1}{|a|}\sin^{-1}\Big(\tfrac{2\Re\big(\frac{1}{a} e^{az + b}\big)}{(C_3 \cdot C_5)^2}\Big) + D.}
\]
This represents another surface within the family of Scherk’s First Surfaces.\\

\textbf{Solution and its Maximal Domain of Definition:} 
The characterisation of the solution to the NMG Transformation Problem \ref{eqn: Complex Non-Trivial MGT: 2} under the Choice of Domain as in \ref{section: assumptions} and \ref{section: assumption 2} and the condition that the real constant $k$ (as in Equation \ref{eqn: weakened eqn 2}) is zero, is the following:

If such a Minimal Graph $f$ with a NMG Transformation $g$ exists under the above criteria, then it must be the \textbf{Scherk's First Surface} given by Formula:
\[
\boxed{f=\tfrac{1}{|a|} \sin^{-1}\Big[\tfrac{2\Re\big[\frac{1}{a} e^{a z + b}\big]}{C_3^2}\Big]-C_2}.
\]
where \(z=x+iy\in\mathbb{C}\). Moreover, the NMG Transformation $g$ for this Minimal Graph Surface \(f\) should be:
\[
\boxed{g(s) = \pm \tfrac{1}{|a|}\sin^{-1}\left(\tfrac{\sin[|a|(s + C_2)]}{C_5^2}\right) + D.}
\]
Finally, the Transformed Minimal Surface under the the NMG Transformation $g$  is also  \textbf{Scherk's First Surface}, and is given by: 
\[
  \boxed{g(f) = \pm \tfrac{1}{|a|}\sin^{-1}\left(\tfrac{2\Re\left(\tfrac{1}{a} e^{az + b}\right)}{(C_3 \cdot C_5)^2}\right) + D.}
\]
We note that for the NMG Transformation $g$ to be defined, we must have \(\tfrac{\sin[|a|(s + C_2)]}{C_5^2} \in [-1,1]\), or equivalently, \(s \in \tfrac{1}{|a|}\sin^{-1}[-C_5^2, C_5^2] - C_2\). Call this set \(U\). Thus, \(U\) is the maximal domain of definition of the NMG Transformation $g$. Now, \(U = \mathbb{R}\) if \(C_5^2 \geq 1\); otherwise, it is a proper subset of \(\mathbb{R}\).

Next, we determine the maximal domain of definition \(\Omega\) for the Minimal Graph Surface \(f\). For \(f\) to be defined, the function \(\frac{2\Re\big[\frac{1}{a} e^{a z + b}\big]}{C_3^2}\) must take values in \([-1,1]\) on some domain \(\Omega'\). Furthermore, since the range of \(f\) must lie within \(U\), the maximal domain of \(g\), we define \(\Omega'' = f^{-1}(U)\). Therefore, the maximal domain of definition of the Minimal Graph Surface \(f\) is given by \(\Omega = \Omega' \cap \Omega''.\) 

\section{Method for Solving the Weakened Equation \ref{eqn: weakened eqn 2} for the Case \(k \neq 0\)}
In this section, we devise a method to solve the Weakened Equation \ref{eqn: weakened eqn 2} for the case where \(k \neq 0\). The approach is to decompose the given equation into two simpler equations, which can then be solved independently. We begin with the Weakened Equation \ref{eqn: weakened eqn 2}: \([\ln(A'(z))]_{zz} = k [A'(z)]^2, \quad \text{for } k \neq 0.\) Let us introduce a holomorphic function \(\mathcal{A}\) defined by the relation \(\frac{\mathcal{A}}{\sqrt{|k|}} = A.\) This substitution transforms the Weakened Equation \ref{eqn: weakened eqn 2} into an equivalent equation in terms of \(\mathcal{A}\): \(\big[\ln\big(\tfrac{\mathcal{A}'(z)}{\sqrt{|k|}}\big)\big]_{zz} 
= \frac{k}{|k|} [\mathcal{A}'(z)]^2\). Notice that \(\big[\ln(\tfrac{\mathcal{A}'(z)}{\sqrt{|k|}})\big]_{zz} = [\ln(\mathcal{A}'(z))]_{zz}\), and \(\tfrac{k}{|k|} = \pm 1\). Therefore, we obtain two distinct equations depending on the sign of \(k\):
\begin{equation}
	\label{eqn: weakened eqn 2 k>0}
	[\ln(\mathcal{A}'(z))]_{zz} = [\mathcal{A}'(z)]^2 
	\qquad \textbf{(Weakened Equation for } k > 0\textbf{)}
\end{equation}
\begin{equation}
	\label{eqn: weakened eqn 2 k<0}
	[\ln(\mathcal{A}'(z))]_{zz} = -[\mathcal{A}'(z)]^2 
	\qquad \textbf{(Weakened Equation for } k < 0\textbf{)}
\end{equation}
We will analyze and solve these two equations in the subsequent sections.

\section{Solution to the Weakened Equation for \(k>0\)}
In this section, we will analytically solve Equation \ref{eqn: weakened eqn 2 k>0}: \([\ln(\mathcal{A}'(z))]_{zz} = [\mathcal{A}'(z)]^2 \) and thereby obtain all minimal surfaces for which the First Characteristic Function $h(s)$ is nonzero and the constant $k$ is positive. Furthermore, we will determine all possible NMG Transformations $g$ associated with these surfaces and corresponding Transformed Minimal Surfaces. We make the following substitution: $\mathcal{B}(z)=\ln[\mathcal{A}'(z)]$. Hence our Equation \ref{eqn: weakened eqn 2 k>0} is transformed to: \(\mathcal{B}_{zz}=e^{2\mathcal{B}}\). Now multiply LHS and RHS with $2\mathcal{B}_z$, we get: \(2\mathcal{B}_z\mathcal{B}_{zz}=[\mathcal{B}_z^2]_z=[e^{2\mathcal{B}}]_z=e^{2\mathcal{B}}2\mathcal{B}_z\). Which means that:
\[
  \mathcal{B}_z^2=e^{2\mathcal{B}}+c_0 \qquad\text{for some complex constant} c_0
\]
then:
\[
  \mathcal{B}_z=\pm \sqrt{e^{2\mathcal{B}}+c_0}
\]
\textbf{This leads us to 2 cases}: $c_0=0$ and $c_0\neq0$

\subsection{Subcase 1: $c_0=0$}
\textbf{Finding solutions \((u,j)\) to the Modified Nontrivial Minimal Graph Transformation Problem \ref{eqn: Modified Complex Non-Trivial MGT: 2}:} We take the complex constant \(c_0 = 0\). This implies that \(\mathcal{B}_z = \pm \sqrt{e^{2\mathcal{B}}} = \pm e^{\mathcal{B}}\). Let us denote \(\pm 1 = \epsilon_0\), so that we have \(dz = \epsilon_0 e^{-\mathcal{B}} d\mathcal{B}\). Integrating both sides gives \(-\epsilon_0 e^{-\mathcal{B}} = z + c_1\) for some complex constant \(c_1\). This further implies 
\(-\mathcal{B} = \ln[-\epsilon_0(z + c_1)]\), and hence \(\mathcal{B}=\ln\big[\tfrac{-\epsilon_0}{c_1 + z}\big]\). But we know that \(\mathcal{B} = \ln[\mathcal{A}'(z)]\). Therefore, \(\ln[\mathcal{A}'(z)]=\ln\big[\tfrac{-\epsilon_0}{c_1 + z}\big]\).
Exponentiating both sides gives \(\mathcal{A}'(z)=\tfrac{-\epsilon_0}{c_1 + z}\). Integrating both sides, we obtain \(\mathcal{A}(z)=\int\frac{-\epsilon_0}{c_1 + z}=-\epsilon_0\ln(c_1 + z)+c_2\) where \(c_1, c_2 \in \mathbb{C}\) are constants. Now, we compute \(A(z) = \frac{\mathcal{A}}{\sqrt{k}}\). Since \(c_2\) is an arbitrary constant, we may absorb the factor \(\frac{1}{\sqrt{k}}\) into it and denote it again as \(c_2\). Thus, we have \(A(z)= \frac{-\epsilon_0}{\sqrt{k}}\ln(c_1 + z)+c_2\). By definition, \(A(z)\) satisfies \(u(z, \bar{z}) = A(z) + \overline{A(z)}\). Hence,
\[
 u(z, \bar{z}) = \tfrac{-\epsilon_0}{\sqrt{k}} \ln(z + c_1) + c_2 + \tfrac{-\epsilon_0}{\sqrt{k}} \ln(\bar{z} + \bar{c}_1) + \bar{c}_2 = \tfrac{-2\epsilon_0}{\sqrt{k}} \Re[\ln(z + c_1)] + 2\Re[c_2].
\]
So, we obtain the formula for \(u\): \(\boxed{u(z,\bar z) = \tfrac{-2\epsilon_0}{\sqrt{k}} \Re[\ln(c_1 + z)] + 2\Re[c_2]}\,\) for some complex constants \(c_1\) and \(c_2\), where \(\epsilon_0 = \pm 1\). Now, let us calculate the second characteristic function \(j\). We have \(\frac{u_z^2u_{\bar z\bar z}+u_{\bar z}^2u_{zz}}{u_z u_{\bar z}} = j(u).\)

We compute the derivatives: \(u_z = \tfrac{-\epsilon_0}{(c_1 + z)\sqrt{k}}, \,\,\, u_{\bar z} = \tfrac{-\epsilon_0}{(\bar c_1 + \bar z)\sqrt{k}}, \,\,\, u_{zz} = \tfrac{\epsilon_0}{(c_1 + z)^2\sqrt{k}}, \,\,\, u_{\bar z\bar z} = \tfrac{\epsilon_0}{(\bar c_1 + \bar z)^2\sqrt{k}}\). Substituting these into the defining equation for \(j(u)\), we get
\[
\tfrac{u_z^2u_{\bar z\bar z}+u_{\bar z}^2u_{zz}}{u_z u_{\bar z}} = \big[\tfrac{2\epsilon_0}{(\bar c_1 + \bar z)^2 (c_1 + z)^2 k \sqrt{k}}\big]/\big[\tfrac{1}{(\bar c_1 + \bar z)(c_1 + z) k}\big]= \tfrac{2\epsilon_0}{(\bar c_1 + \bar z)(c_1 + z)\sqrt{k}} = j(u).
\]
We observe that: \(\tfrac{2\epsilon_0}{j(u)\sqrt{k}} = (\bar c_1 + \bar z)(c_1 + z)
= e^{-\epsilon_0 \sqrt{k}(u - 2\Re[c_2])}.\) Hence \(\boxed{j(u) = \tfrac{2\epsilon_0}{\sqrt{k}} e^{\epsilon_0 \sqrt{k} (u - 2\Re[c_2])}}.\)

Thus, we have found the pair \((u,j)\) that satisfies the Modified NMG Transformation Problem:
\[
\boxed{u(z,\bar z)=\tfrac{-2\epsilon_0}{\sqrt{k}}\Re[\ln(c_1 +  z)]+2\Re[c_2]}\qquad\text{and}\qquad\boxed{j(s)=\tfrac{2\epsilon_0}{\sqrt{k}}e^{\epsilon_0\cdot\sqrt{k}\cdot(s-2\Re[c_2])}}
\]
We note that the second characteristic function \(j\) remains non-vanishing throughout \(\mathbb{R}\). Likewise, the derivatives \(u_z\) and \(u_{\bar{z}}\) are non-zero everywhere on \(\mathbb{C}\). Consequently, for \(\Omega_0 = u^{-1}(\mathbb{R})\), we have \(u_z, u_{\bar{z}} \neq 0\) on \(\Omega_0\), and \(u(\Omega_0) = J\). Hence, the pair \((u, j)\) constitutes a valid solution of the Modified NMG Transformation Problem, where \(u : \Omega_0 \to \mathbb{R}\) and \(j : J \to \mathbb{R}\), both satisfy the required non-vanishing conditions on their respective domains. Therefore, \((u, j)\) meets the hypotheses of Theorem \ref{thm: Equivalence thm}.

By Theorem \ref{thm: Equivalence thm}, there exists a corresponding Minimal Graph Surface \(f\) and an associated First Characteristic Function \(h\) that together solve the NMG Transformation Problem. Next, applying the converse part of Theorem \ref{thm: Equivalence thm}, we can construct all possible open subintervals \(J_0 \subseteq J\) and corresponding pairs \((f, h)\) which solve the NMG Transformation Problem \ref{eqn: Complex Non-Trivial MGT: 2} on the domains \(\Omega = u^{-1}(J_0)\) and \(I = f(\Omega)\) such that the corresponding pair \((u_0, j_0)\) for the modified problem satisfies \(u_0 = u|_{\Omega}\) and \(j_0 = j|_{J_0}.\)

We now proceed to determine the explicit form of the minimal surface \(f\) and the associated first characteristic function \(h\).

\textbf{Finding solutions \((f,h)\) to the Nontrivial Minimal Graph Transformation Problem \ref{eqn: Complex Non-Trivial MGT: 2}:} We will proceed in the construction given in the converse direction of the Equivalence Theorem \ref{thm: Equivalence thm}: We have \(j_1(s)=-2\int j(s)ds=-2\int \tfrac{2\epsilon_0}{\sqrt{k}}e^{\epsilon_0\cdot\sqrt{k}\cdot(s-2\Re[c_2])} ds\) which is: \(j_1(s)=\tfrac{-4}{k}e^{\epsilon_0\cdot\sqrt{k}\cdot(s-2\Re[c_2])}+C_1\), as we see in the theorem \ref{thm: Equivalence thm} we must make choice of \(C_1\) such that \(j_1>0\), but since \(\tfrac{-4}{k}e^{\epsilon_0\cdot\sqrt{k}\cdot(s-2\Re[c_2])}<0\) as \(k>0\) and exponential map also positive, we must have \(C_1>0\).
We now compute \(j_2(s)=\int \tfrac{1}{\sqrt{j_1(s)}}\). Now \footnotesize \(\int \tfrac{1}{\sqrt{j_1(s)}}=\int \tfrac{ds}{\sqrt{\frac{-4}{k}e^{\epsilon_0\cdot\sqrt{k}\cdot(s-2\Re[c_2])}+C_1}}\)\normalsize . Now let us make the following variable change: \footnotesize \(u=\sqrt{\tfrac{-4}{k}e^{\epsilon_0\cdot\sqrt{k}\cdot(s-2\Re[c_2])}+C_1}\) \normalsize, then we have \footnotesize \(du=\frac{\epsilon_0\sqrt{k}\,\big[\frac{-4}{k}e^{\epsilon_0\cdot\sqrt{k}\cdot(s-2\Re[c_2])}\big]}{2\sqrt{\frac{-4}{k}e^{\epsilon_0\cdot\sqrt{k}\cdot(s-2\Re[c_2])}+C_1}}ds=\frac{\epsilon_0\sqrt{k}[u^2-C_1]ds}{2u}\) \normalsize, That gives us that \(\tfrac{2}{\epsilon_0\sqrt{k}}\cdot \tfrac{du}{[u^2-C_1]}=\frac{ds}{u}\), note that \(C_1>u^2\). So we rewrite the previous equation with \(C_1-u^2\) instead of \(u^2-C_1\) in the denominator, we then put an integral on both sides: \(\tfrac{-2}{\epsilon_0\sqrt{k}}\cdot \int\tfrac{du}{[C_1-u^2]}=\int\tfrac{ds}{u}=\int \frac{1}{\sqrt{j_1(s)}}=j_2(s)\), Now since \(C_1>0\), we have this to be a standard integral and we refer to \cite{JD}, page 152, Equation 3.5.1.7 for its solution and we have \(j_2(s)=\tfrac{-2}{\epsilon_0\sqrt{kC_1}}\cdot\tanh^{-1}\big(\tfrac{u}{\sqrt{C_1}}\big)\). We now compute the inverse \(j_3 = j_2^{-1}\) of \(j_2\). To do this, set \(j_2(s) = t\). Then, \(\tfrac{-\epsilon_0 t \sqrt{k C_1}}{2} = \tanh^{-1}\big(\tfrac{u}{\sqrt{C_1}}\big)\), which implies that \(\tanh\big(\tfrac{-\epsilon_0 t \sqrt{k C_1}}{2}\big) = \tfrac{u}{\sqrt{C_1}}\). By squaring both sides, we obtain \(C_1 \tanh^2\big(\tfrac{-\epsilon_0 t \sqrt{k C_1}}{2}\big) = u^2 = \tfrac{-4}{k} e^{\epsilon_0 \sqrt{k} (s - 2\Re[c_2])} + C_1\). Hence,  
\(C_1 \big[\tanh^2\big(\tfrac{-\epsilon_0 t \sqrt{k C_1}}{2}\big) - 1\big] = \tfrac{-4}{k} e^{\epsilon_0 \sqrt{k} (s - 2\Re[c_2])}\). Recalling the hyperbolic identity \(1 - \tanh^2(-x) = 1 - \tanh^2(x) = \operatorname{sech}^2(x)\), we obtain  \(\tfrac{k C_1}{4} \operatorname{sech}^2\big(\tfrac{\epsilon_0 t \sqrt{k C_1}}{2}\big) = e^{\epsilon_0 \sqrt{k} (s - 2\Re[c_2])}\). Simplifying further, we find  \(j_3(t) = \tfrac{2}{\epsilon_0 \sqrt{k}} \ln\big[\operatorname{sech}\big(\tfrac{\epsilon_0 t \sqrt{k C_1}}{2}\big)\big] + \tfrac{1}{\epsilon_0 \sqrt{k}} \ln\big[\tfrac{k C_1}{4}\big] + 2\Re[c_2]\). Now we compute \(H(s)\) using the formula \(H(s) = j_3(s + C_2)\), which gives  us the formula for \(H(s) = \tfrac{2}{\epsilon_0 \sqrt{k}} \ln\big[\operatorname{sech}\big(\tfrac{\epsilon_0 \sqrt{k C_1}}{2} [s + C_2]\big)\big] + \tfrac{1}{\epsilon_0 \sqrt{k}} \ln\big[\tfrac{k C_1}{4}\big] + 2\Re[c_2]\). Differentiating, we obtain  \(H'(s) = -\sqrt{C_1} \tanh\big(\tfrac{\epsilon_0 \sqrt{k C_1}}{2} [s + C_2]\big)\), and  \(H''(s) = -\tfrac{\epsilon_0 C_1 \sqrt{k}}{2} \operatorname{sech}^2\big(\tfrac{\epsilon_0 \sqrt{k C_1}}{2} [s + C_2]\big)\). Using these expressions, we can now find the Second Characteristic Function of the Nontrivial Minimal Graph Transformation Problem using the formula \(h(s) = -\tfrac{H''(s)}{H'(s)}\).
\[
  h(s)=-\tfrac{-\tfrac{\epsilon_0C_1\sqrt{k}}{2} \operatorname{sech}^2\big(\tfrac{\epsilon_0\sqrt{k\cdot C_1}}{2}[s+C_2]\big)}{-\sqrt{C_1}\tanh\big(\tfrac{\epsilon_0\sqrt{k\cdot C_1}}{2}[s+C_2]\big)}=-\tfrac{\epsilon_0\sqrt{kC_1}}{2\sinh\big(\tfrac{\epsilon_0\sqrt{k\cdot C_1}}{2}[s+C_2]\big)\cosh\big(\tfrac{\epsilon_0\sqrt{k\cdot C_1}}{2}[s+C_2]\big)}
\]
Hence,  \(h(s) = -\epsilon_0 \sqrt{k C_1}\, \operatorname{csch}\big(\epsilon_0 \sqrt{k C_1}\,[s + C_2]\big)\).  
Since \(\operatorname{csch}\) is an odd function, we can take \(\epsilon_0\) outside, giving  
\(h(s) = -\sqrt{k C_1}\, \operatorname{csch}\big(\sqrt{k C_1}\,[s + C_2]\big)\). Next, we find the inverse diffeomorphism \(K\) of \(H\) to determine \(f\), using the formula \(K(u) = f\).  
We determine that \(K(t) = \tfrac{2}{\epsilon_0 \sqrt{k C_1}} \cosh^{-1}\!\Big[e^{-\tfrac{\epsilon_0 \sqrt{k}}{2} \big[t - \tfrac{2}{\epsilon_0 \sqrt{k}} \ln(\tfrac{\sqrt{k C_1}}{2}) - 2\Re[c_2]\big]}\Big] - C_2.\) We rewrite  
\(u(z, \bar{z}) = \tfrac{-2\epsilon_0}{\sqrt{k}} \Re[\ln(c_1 + z)] + 2\Re[c_2] = \tfrac{-2\epsilon_0}{\sqrt{k}} \ln(|c_1 + z|) + 2\Re[c_2].\) Substituting this into the formula \(K(u) = f\), we can determine the Minimal Graph Surface \(f(x, y)\).
\[
  f(z,\bar z)=K(u)=\tfrac{2}{\epsilon_0\sqrt{k\cdot C_1}}\cosh^{-1}\Big[\tfrac{\sqrt{kC_1}}{2}|c_1 +  z|\Big]-C_2
\] 
Now, if we set \(\lambda = \tfrac{\sqrt{k C_1}}{2}\), we obtain \(f(z, \bar{z}) = \tfrac{\epsilon_0}{\lambda} \cosh^{-1}\!\big[\lambda |c_1 + z|\big] - C_2.\) Clearly, this represents the \textbf{Catenoid} family of Minimal Surfaces. Hence, we have the following pair:
\begin{equation}
	\label{eqn: soln k>0, c_0=0}
	\boxed{f(x,y)=\frac{\epsilon_0}{\lambda}\cdot \cosh^{-1}\Big[\lambda|c_1 +  z|\Big]-C_2} \;\;\text{and}\;\;\boxed{h(s)=-2\lambda\operatorname{csch}\big(2\lambda[s+C_2]\big)}
\end{equation}
where \(z=x+iy\in\mathbb{C}\) and $\lambda=\tfrac{\sqrt{kC_1}}{2}$

\textbf{Finding Non-Trivial Minimal Graph Transformations.} We can now determine the NMG Transformation \(g\) directly using Formula~\ref{eqn: conv h to g}, which is given by  \(g = \pm \int \frac{1}{\sqrt{1 \pm C^2 e^{2\int h\, dt}}}\, dt + D.\) To proceed, we first compute \(\int h(s)\, ds\).  Recalling the standard integral \(\int \text{csch}(2ax)=\tfrac{1}{2a}\ln|\tanh(ax)|+C\), Refer \cite{JD} Page 189 Equation 7.1.1.1.5 for this integral, and noting that \(h(s)=-2\lambda\operatorname{csch}\big(2\lambda[s+C_2]\big)\), we obtain  
\(\int h(s)\, ds = -\ln|\tanh(\lambda[s+C_2])|\). Next, we evaluate \(e^{2\int h(s)\, ds} = e^{-2\ln|\tanh(\lambda[s+C_2])|} = \coth^2(\lambda[s+C_2]).\) Substituting this result into the transformation formula, we find the NMG Transformation: \(g = \pm \int \frac{1}{\sqrt{1 \pm C^2\coth^2(\lambda[s+C_2])}}\, ds + D.\)

We now make the substitution \(t = \lambda[s + C_2]\), which gives \(\tfrac{dt}{\lambda} = ds\). Substituting this into the previous expression, we obtain  \(g = \tfrac{\pm 1}{\lambda} \int \frac{1}{\sqrt{1 \pm C^2 \coth^2(t)}}\, dt + D.\)
Introducing the notation \(\epsilon_1 = \pm 1\), we can rewrite this as  \(g = \tfrac{\pm 1}{\lambda} \int \frac{\sinh(t)}{\sqrt{(1 + \epsilon_1 C^2)\cosh^2(t) - 1}}\, dt + D.\) To ensure that \(g\) remains real-valued, we must have \(1 + \epsilon_1 C^2 > 0\); we therefore choose \(C\) accordingly.  Under this condition, the expression simplifies to  \(g = \tfrac{\pm 1}{\lambda \sqrt{1 + \epsilon_1 C^2}} \int \frac{\sinh(t)}{\sqrt{\cosh^2(t) - \tfrac{1}{1 + \epsilon_1 C^2}}}\, dt + D.\) We can evaluate this integral using the substitution \(v = \cosh t\), \(dv = \sinh t\, dt\), which yields  \(g = \tfrac{\pm 1}{\lambda \sqrt{1 + \epsilon_1 C^2}} \cosh^{-1}\!\big(v \sqrt{1 + \epsilon_1 C^2}\big) + D.\) Call \(\mu=\sqrt{1 +\epsilon_1 C^2}\) as it can attain any positive value by choosing appropriate value for \(C\) Finally, substituting back \(v = \cosh t\) and then \(t = \lambda[s + C_2]\), we obtain the following expression for \(g\).
\[
 \boxed{ g = \frac{\pm1}{\lambda\mu}\cosh^{-1}\big(\mu\cdot\cosh[\lambda[s+C_2]]\big)+D}
\] 

\textbf{Finding the Transformed Minimal Surface.}  
We substitute \(f\) in place of \(s\), giving the transformed minimal surface:  
\[
  g(f) =  \frac{\pm1}{\lambda\mu}\cosh^{-1}\big(\mu\cosh[\lambda[\frac{\epsilon_0}{\lambda}\cdot \cosh^{-1}\Big[\lambda|c_1 +  z|\Big]-C_2+C_2]]\big)+D
\]
We simplify this and hence, we obtain  
\[
\boxed{g(f) = \frac{\pm1}{\lambda\mu}\cosh^{-1}\big(\lambda\mu|c_1 +  z|\big)+D.}
\]
This represents another surface within the family of Catenoids.

\textbf{Solution and its Maximal Domain of Definition:} 
The characterisation of the solution to the NMG Transformation Problem \ref{eqn: Complex Non-Trivial MGT: 2} under the Choice of Domain as in \ref{section: assumptions} and \ref{section: assumption 2} and the condition that the real constant $k>0$ and \(|c_0|=0\), is the following:

If such a Minimal Graph $f$ with a NMG Transformation $g$ exists under the above criteria, then it must be the \textbf{Catenoids} given by Formula:
\[
\boxed{f=\frac{\epsilon_0}{\lambda}\cdot \cosh^{-1}\Big[\lambda|c_1 +  z|\Big]-C_2} \;\;\text{and}\;\;\boxed{h(s)=-2\lambda\operatorname{csch}\big(2\lambda[s+C_2]\big)}
\]
where \(z=x+iy\in\mathbb{C}\) and $\lambda=\tfrac{\sqrt{kC_1}}{2}$. Moreover, the NMG Transformation $g$ for this Minimal Graph Surface \(f\) should be:
\[
\boxed{g(s)=\frac{\pm1}{\lambda\mu}\cosh^{-1}\big(\mu\cdot\cosh[\lambda[s+C_2]]\big)+D}
\]
where $\mu=\sqrt{1 +\epsilon_1 C^2}$. Finally, the Transformed Minimal Surface under the the NMG Transformation $g$  is also \textbf{Catenoid}, and is given by: 
\[
\boxed{g(f) = \frac{\pm1}{\lambda\mu}\cosh^{-1}\big(\lambda\mu|c_1 +  z|\big)+D.}
\]

 \subsection{Subcase 2: $c_0\neq0$}\label{section: c_0 neq 0 k-positive}
  \textbf{Finding solutions \((u,j)\) to the Modified Nontrivial Minimal Graph Transformation Problem \ref{eqn: Modified Complex Non-Trivial MGT: 2}:} We aim to solve the equation \(\mathcal{B}_z = \pm \sqrt{c_0 + e^{2\mathcal{B}}}\). Letting \(\pm 1 = \epsilon_0\), we have \(\mathcal{B}_z = \epsilon_0 \sqrt{c_0 + e^{2\mathcal{B}}} \Rightarrow \epsilon_0 \int \frac{d\mathcal{B}}{\sqrt{c_0 + e^{2\mathcal{B}}}} = z + c_1.\) Now, substitute \(v = \sqrt{c_0 + e^{2\mathcal{B}}}\), which implies \(dv = \frac{e^{2\mathcal{B}}\, d\mathcal{B}}{\sqrt{c_0 + e^{2\mathcal{B}}}} = \frac{v^2 - c_0}{v} d\mathcal{B}\). Hence, \(\epsilon_0 \int \frac{d\mathcal{B}}{\sqrt{c_0 + e^{2\mathcal{B}}}} =\epsilon_0\int\frac{d\mathcal{B}}{v}=\epsilon_0\int\frac{dv}{v^2-c_0}= \frac{\epsilon_0}{2\sqrt{c_0}} \ln\big(\frac{v - \sqrt{c_0}}{v + \sqrt{c_0}}\big)\). This can be rewritten as following: \(\frac{\epsilon_0}{2\sqrt{c_0}}\ln\big(\frac{v-\sqrt{c_0}}{v+\sqrt{c_0}}\big)=\frac{\epsilon_0}{\sqrt{c_0}}\cdot\frac{1}{2} \ln\big(\frac{[-v/c_0] + 1}{[-v/c_0] - 1}\big) =\frac{-\epsilon_0}{\sqrt{c_0}} \coth^{-1}\big(\frac{v}{\sqrt{c_0}}\big)\). Substituting back \(v = \sqrt{c_0 + e^{2\mathcal{B}}}\), we get \(z + c_1 = \frac{-\epsilon_0}{\sqrt{c_0}} \coth^{-1}\big(\sqrt{\frac{c_0 + e^{2\mathcal{B}}}{c_0}}\big)\). Thus, \(e^{2\mathcal{B}} = c_0 (\coth^2(\epsilon_0 \sqrt{c_0}[z + c_1]) - 1)\), now using the hyperbolic identity \(\coth^2(x)-1=\operatorname{csch}^2(x)\), we get that \(e^{2\mathcal{B}}=c_0\operatorname{csch}^2(\epsilon_0 \sqrt{c_0}[z + c_1])=c_0\operatorname{csch}^2(\sqrt{c_0}[z + c_1])\) as \(\operatorname{csch}^2(-x)=\operatorname{csch}^2(x)\), which implies \(e^{\mathcal{B}} = \epsilon_1 \sqrt{c_0}\, \operatorname{csch}(\sqrt{c_0}[z + c_1])\), where \(\epsilon_1 = \pm 1.\) We know that \(\mathcal{B}(z)\) is given by the relation \(\mathcal{B}(z) = \ln[\mathcal{A}'(z)]\), we find that \(\mathcal{B} = \ln\big(\epsilon_1 \sqrt{c_0}\operatorname{csch}(\sqrt{c_0}[z + c_1])\big) = \ln[\mathcal{A}'(z)]\), exponentiating on both sides gives that \(\mathcal{A}'(z) = \epsilon_1 \sqrt{c_0}\operatorname{csch}(\sqrt{c_0}[z + c_1])\). We integrate this using the standard complex integral identity: \(\int \operatorname{csch}(x)\, dx = \ln(\tanh(\tfrac{x}{2})) + C\) to get an expression for \(\mathcal{A}(z)\): \(\mathcal{A}(z) + c_2 = \epsilon_1 \sqrt{c_0} \int \operatorname{csch}(\sqrt{c_0}[z + c_1])\, dz = \epsilon_1\ln\big(\tanh(\tfrac{\sqrt{c_0}(z + c_1)}{2})\big)\). Now, since \(A = \tfrac{\mathcal{A}}{\sqrt{k}}\), we have \(A(z) = \tfrac{\epsilon_1}{\sqrt{k}}\ln\big(\tanh(\tfrac{\sqrt{c_0}(z + c_1)}{2})\big) + c_2\). Finally, the function \(u(z, \bar{z})\) is given by:
  $$u(z, \bar{z}) = \tfrac{\epsilon_1}{\sqrt{k}} \ln\big(\tanh(\tfrac{\sqrt{c_0}(z + c_1)}{2})\big) + c_2 + \tfrac{\epsilon_1}{\sqrt{k}} \ln\big(\tanh(\tfrac{\sqrt{\bar{c}_0}(\bar{z} + \bar{c}_1)}{2})\big) + \bar{c}_2.$$
 
  We now calculate \(u_z, u_{\bar z}, u_{zz}\), and \(u_{\bar z \bar z}\) to determine the second characteristic function \(j(u)\). We have \(u_z = \tfrac{\epsilon_1 \sqrt{c_0}}{\sqrt{k}} \operatorname{csch}(\sqrt{c_0}[z + c_1])\), \(u_{\bar z} = \tfrac{\epsilon_1 \sqrt{\bar{c}_0}}{\sqrt{k}} \operatorname{csch}(\sqrt{\bar{c}_0}[\bar{z} + \bar{c}_1])\), \(u_{zz} = -\tfrac{\epsilon_1 c_0}{\sqrt{k}} \operatorname{csch}(\sqrt{c_0}[z + c_1]) \operatorname{coth}(\sqrt{c_0}[z + c_1])\), and \(u_{\bar z \bar z} = -\tfrac{\epsilon_1 \bar{c}_0}{\sqrt{k}} \operatorname{csch}(\sqrt{\bar{c}_0}[\bar{z} + \bar{c}_1]) \operatorname{coth}(\sqrt{\bar{c}_0}[\bar{z} + \bar{c}_1])\). Using these, we compute $\frac{u_{zz}u_{\overline{z}}^2+u_{\overline{z}\overline{z}}u_z^2}{u_zu_{\overline{z}}}$ to get \(j\)
  \[
    \frac{u_{zz}u_{\overline{z}}^2+u_{\overline{z}\overline{z}}u_z^2}{u_zu_{\overline{z}}}=\tfrac{-\epsilon_1|c_0|}{\sqrt{k}}\cdot\frac{\cosh(\sqrt c_0[z+c_1])+\cosh(\sqrt{\bar c_0}\;[\bar z+\bar c_1])}{\sinh\big(\sqrt c_0[z+c_1]\big)\sinh\big(\sqrt{\bar c_0}\;[\bar z+\bar c_1]\big)}=j(u)
  \]
 
 Now let \(a = \tfrac{-\epsilon_1 |c_0|}{\sqrt{k}}\) and \(2v = \sqrt{c_0}[z + c_1]\). Using the hyperbolic identities \(\cosh(2x) + \cosh(2y) = 2\cosh(x + y)\cosh(x - y)\) and \(\sinh(2x) = 2\sinh(x)\cosh(x)\), we obtain \(j(u) = a \cdot \frac{\cosh(2v) + \cosh(2\bar{v})}{\sinh(2v)\sinh(2\bar{v})} = a \cdot \frac{\cosh(v + \bar{v})\cosh(v - \bar{v})}{2\sinh(v)\cosh(v)\sinh(\bar{v})\cosh(\bar{v})}\). Applying identities of hyperbolic functions: \(2\sinh(v)\sinh(\bar{v}) = \cosh(v + \bar{v}) - \cosh(v - \bar{v})\) and \(2\cosh(v)\cosh(\bar{v}) = \cosh(v + \bar{v}) + \cosh(v - \bar{v})\), we simplify to \(j(u) = 2a \cdot \frac{\cosh(v + \bar{v})\cosh(v - \bar{v})}{(\cosh(v + \bar{v}) + \cosh(v - \bar{v}))(\cosh(v + \bar{v}) - \cosh(v - \bar{v}))}.\)
 
 Call \(\mathbf{a} = 2a\), \(\mathbf{b} = \cosh(v + \bar{v})\), and \(\mathbf{c} = \cosh(v - \bar{v})\). Then we can write \(\tfrac{j(u)}{\mathbf{a}} = \tfrac{\mathbf{b}\mathbf{c}}{\mathbf{b}^2 - \mathbf{c}^2}\), and denote this quantity by \(U\). From the definition of \(u\), we also have \(e^{\sqrt{k}\epsilon_1[u - 2\Re(c_2)]} = \tanh(v)\tanh(\bar{v}) = \tfrac{\sinh(v)\sinh(\bar{v})}{\cosh(v)\cosh(\bar{v})} = \tfrac{\mathbf{b} - \mathbf{c}}{\mathbf{b} + \mathbf{c}} = \tfrac{(\mathbf{b} - \mathbf{c})^2}{\mathbf{b}^2 - \mathbf{c}^2}\), which we denote by \(V\). Observing that \(V + 4U = \tfrac{1}{V}\), it follows that \(U = \tfrac{1}{4}\big(\tfrac{1}{V} - V\big)\). Therefore, \(j(u) = \mathbf{a}U = 2aU = \tfrac{-2\epsilon_1|c_0|}{\sqrt{k}}U = \tfrac{-2\epsilon_1|c_0|}{4\sqrt{k}}\big(\tfrac{1}{V} - V\big)\). Hence:
 \[
   j(u)=\tfrac{-2\epsilon_1|c_0|}{\sqrt{k}}U=\tfrac{-\epsilon_1|c_0|}{2\sqrt{k}}\big(e^{-\sqrt k\epsilon_1[u-2\Re(c_2)]}-e^{\sqrt k\epsilon_1[u-2\Re(c_2)]}\big)=\tfrac{-\epsilon_1|c_0|}{\sqrt{k}}\sinh(-\sqrt k\epsilon_1[u-2\Re(c_2)])
 \]
 Since \(\sinh\) is an odd function: \(j(u)=\tfrac{|c_0|}{\sqrt{k}}\sinh(\sqrt k[u-2\Re(c_2)])\). Hence we have the following pair
 \[
 \boxed{u(z,\bar z)=\tfrac{2\epsilon_1}{\sqrt{k}} \Re\Big[\ln\big(\tanh[\tfrac{\sqrt{c_0}(z + c_1)}{2}]\big)\Big] + 2\Re(c_2)}\;\;\;\;\text{and}\;\;\;\;\boxed{j(s)=\tfrac{|c_0|}{\sqrt{k}}\sinh(\sqrt k[s-2\Re(c_2)])}
 \]
 We note that the second characteristic function $j$ only vanishes at $s=2\Re(c_2)$, hence $j$ is never zero in the intervals $J_1=(2\Re(c_2),\infty)$ and $J_2=(-\infty, 2\Re(c_2))$. Likewise, the derivatives $u_z=\tfrac{\epsilon_1 \sqrt{c_0}}{\sqrt{k}} \operatorname{csch}(\sqrt{c_0}[z + c_1])$ and $u_{\bar{z}} = \tfrac{\epsilon_1 \sqrt{\bar{c}_0}}{\sqrt{k}} \operatorname{csch}(\sqrt{\bar{c}_0}[\bar{z} + \bar{c}_1])$ are non-zero everywhere on $\mathbb{C}$, but they attain $\infty$ at $z=-c_1+\tfrac{2\pi i n}{\sqrt{c_0}}$ for all integer $n$. So for $\Omega=\mathbb{C}\setminus\{-c_1+\tfrac{2\pi i n}{\sqrt{c_0}} \mid n\in\mathbb{Z}\}$, $u$ is well defined. Consequently, for $\Omega_0 = u^{-1}(J_1)\cap\Omega$ or $\Omega_0 = u^{-1}(J_2)\cap\Omega$, we have $u_z, u_{\bar{z}} \neq 0$ on $\Omega_0$, and define $u(\Omega_0) = J$. Hence, the pair $(u, j)$ constitutes a valid solution of the Modified NMG Transformation Problem, where $u : \Omega_0 \to \mathbb{R}$ and $j : J \to \mathbb{R}$, both satisfy the required non-vanishing conditions on their respective domains. Therefore, $(u, j)$ meets the hypotheses of Theorem \ref{thm: Equivalence thm}.

 By Theorem \ref{thm: Equivalence thm}, there exists a corresponding Minimal Graph Surface $f$ and an associated First Characteristic Function $h$ that together solve the NMG Transformation Problem. Next, applying the converse part of Theorem \ref{thm: Equivalence thm}, we can construct all possible open subintervals $J_0 \subseteq J$ and corresponding pairs $(f, h)$ which solve the NMG Transformation Problem \ref{eqn: Complex Non-Trivial MGT: 2} on the domains $\Omega = u^{-1}(J_0)$ and $I = f(\Omega)$ such that the corresponding pair $(u_0, j_0)$ for the modified problem satisfies $u_0 = u|_{\Omega}$ and $j_0 = j|_{J_0}$.

 We now proceed to determine the explicit form of the minimal surface $f$ and the associated first characteristic function $h$.\\
  
 \textbf{Finding solutions \((f,h)\) to the Modified Nontrivial Minimal Graph Transformation Problem \ref{eqn: Complex Non-Trivial MGT: 2}:}  We begin by computing the function \(j_1(s) = -2\int j(s)\,ds + C_1 = -2\int \tfrac{|c_0|}{\sqrt{k}}\sinh\big(\sqrt k[s-2\Re(c_2)]\big)\,ds + C_1 = C_1 - \tfrac{2|c_0|}{k}\cosh\big(\sqrt k[s-2\Re(c_2)]\big)\). Next we calculate \(j_2(s) = \scaleobj{1}{\int} \tfrac{1}{\sqrt{j_1(s)}}\,ds = \int \tfrac{1}{\sqrt{C_1 - \frac{2|c_0|}{k}\cosh\big(\sqrt k[s-2\Re(c_2)]\big)}}\,ds\). For this to be real-valued, we require that the expression inside squareroot to be positive, that is: \(C_1 - \tfrac{2|c_0|}{k}\cosh\big(\sqrt k[s-2\Re(c_2)]\big) > 0\). Since \(\cosh(x) \ge 1\) and \(k>0\), an appropriate choice for \(C_1\) is that which obey: \(C_1 > \frac{2|c_0|}{k} > 0\); then on a suitable subinterval \(J_0\) we have \(\frac{C_1k}{2|c_0|} > \cosh\big(\sqrt k[s-2\Re(c_2)]\big) > 1\). With this choice of \(C_1\) we evaluate \(j_2(s)\) via the substitution \footnotesize\(x = \sqrt{C_1 - \frac{2|c_0|}{k}\cosh\big(\sqrt k[s-2\Re(c_2)]\big)}\)\normalsize, which gives \footnotesize\(dx = \tfrac{-\frac{2|c_0|}{\sqrt k}\sinh\big(\sqrt k[s-2\Re(c_2)]\big)}{2\sqrt{C_1 - \frac{2|c_0|}{k}\cosh\big(\sqrt k[s-2\Re(c_2)]\big)}}\,ds\)\normalsize. Hence \footnotesize\(2x\,dx = -\tfrac{2|c_0|}{\sqrt k}\sinh\big(\sqrt k[s-2\Re(c_2)]\big)ds = -\frac{2\epsilon_2|c_0|}{\sqrt k}\sqrt{\cosh^2\big(\sqrt k[s-2\Re(c_2)]\big)-1} \;ds\)\normalsize. Observing that \(\cosh^2\big(\sqrt k[s-2\Re(c_2)]\big) = \frac{k^2(C_1-x^2)^2}{4|c_0|^2}\), we obtain \(2x\,dx = -\frac{2\epsilon_2|c_0|}{\sqrt k}\sqrt{\frac{k^2(C_1-x^2)^2}{4|c_0|^2}-1}\;ds\). This implies that:
 \[
   j_2(s)=\int \frac{ds}{x}=- \int \tfrac{dx}{\frac{\epsilon_2|c_0|}{\sqrt k}\sqrt{\frac{k^2(C_1-x^2)^2}{4|c_0|^2}-1}}=-\tfrac{2\epsilon_2}{\sqrt k} \int \tfrac{dx}{\sqrt{(C_1-x^2)^2-\big[\frac{2|c_0|}{k}\big]^2}}
 \]
 Hence we obtain \(j_2(s)=-\frac{2\epsilon_2}{\sqrt k} \scaleobj{1.3}{\int} \tfrac{dx}{\sqrt{\left(C_1-\frac{2|c_0|}{k}-x^2\right)\left(C_1+\frac{2|c_0|}{k}-x^2\right)}}\).  
Define  \(\alpha= \sqrt{C_1-\frac{2|c_0|}{k}}\)  and \(\beta=\sqrt{C_1+\frac{2|c_0|}{k}}\), noting that \(0<\alpha<\beta\); then  
\(j_2(s)=-\tfrac{2\epsilon_2}{\alpha \beta \sqrt k} \scaleobj{1.3}{\int} \tfrac{dx}{\sqrt{\left[1-(\frac{x}{\alpha})^2\right]\left[1-(\frac{x}{\beta})^2\right]}}.\) 
Set \(y=\frac{x}{\alpha}\) and \(\gamma=\frac{\alpha}{\beta}\); clearly \(0<\gamma<1\) and \(dx=\alpha\,dy\).  
Moreover \(|y|<1\), since \(y^2=\frac{x^2}{\alpha^2}= \frac{C_1-\frac{2|c_0|}{k}\cosh\!\big(\sqrt k[s-2\Re(c_2)]\big)}{C_1-\frac{2|c_0|}{k}}<1\). We have: 
 \[
   j_2(s)=-\tfrac{2\epsilon_2}{\beta\sqrt k} \int \tfrac{dy}{\sqrt{\left(1-y^2\right)\left(1-\gamma^2y^2\right)}}   \qquad\text{$0<\gamma<1$ and $|y|\leq1$} 
 \]
 The integral on the right‑hand side is the \textbf{Elliptic Integral of the First Kind}; the corresponding inverse Jacobi elliptic functions can be found in \cite{La} page~50, Equation 3.1.2, yielding \(j_2(s)=-\tfrac{2\epsilon_2}{\beta\sqrt k}\,\operatorname{sn}^{-1}(y,\gamma).\) Now substituting the value for \(y\) we get: \footnotesize \(j_2(s)=-\tfrac{2\epsilon_2}{\beta\sqrt k}\,\operatorname{sn}^{-1} \big(\tfrac{1}{\alpha}\sqrt{C_1 - \tfrac{2|c_0|}{k}\cosh\big(\sqrt k[s-2\Re(c_2)]\big)},\gamma \big)\) \normalsize. 
 
 Because of the inherent difficulty in dealing with elliptic functions, their inverses and derivatives, we now make a general remark concerning Theorem \ref{thm: Equivalence thm} that will help us bypass the calculation of the diffeomorphism \(H(s)\) and its derivatives, allowing us instead to compute \(K=H^{-1}\) and \(H'\) directly for this case: \(c_0\neq 0\) as well as next case where \(k<0\).  We stress that this bypass is not our preferred approach, since the diffeomorphism \(H\) is a crucial part of the machinery for solving the NMG Transformation Problem, but here we avoid that computation due to its pointless complexity.
\begin{remark}\label{remark: bypassing H}
	We have defined \(j_2(s)=\int\frac{1}{\sqrt{j_1(s)}}\,ds\), \(j_3=j_2^{-1}\), \(H(s)=j_3(s+C_2)\), and \(K=H^{-1}\).  From these definitions we get the following directly:
	
	1. \(H'(s)=\tfrac{1}{{j'_2}[j_3(s+C_2)]}=\tfrac{1}{j'_2[H(s)]}=\sqrt{j_1(H(s))}\). 
	
	2. \(K(s)=j_2(s)-C_2\).
	
	3. \(j_2(H(s))=j_2[j_3(s+C_2)]=s+C_2\).
\end{remark} 
We now apply these remarks to find the solution \((f,h)\) to the NMG Transformation Problem \ref{eqn: Complex Non-Trivial MGT: 2}. We have  \footnotesize\(s+C_2=j_2(H(s))=-\tfrac{2\epsilon_2}{\beta\sqrt k}\,\operatorname{sn}^{-1} \big(\tfrac{1}{\alpha}\sqrt{C_1 - \tfrac{2|c_0|}{k}\cosh\big(\sqrt k[H(s)-2\Re(c_2)]\big)},\gamma \big)\)\normalsize. Now from the definition of \(j_1\) we get the following: \(s+C_2=-\tfrac{2\epsilon_2}{\beta\sqrt k}\,\operatorname{sn}^{-1} \big(\tfrac{1}{\alpha}\sqrt{j_1(H(s))},\gamma \big)=-\tfrac{2\epsilon_2}{\beta\sqrt k}\,\operatorname{sn}^{-1} \big(\tfrac{1}{\alpha} H'(s),\gamma \big)\). Therefore we get \(H'(s)=\alpha\operatorname{sn}\big[\tfrac{-\epsilon_2\beta\sqrt k}{2}(s+C_2),\gamma\big]\). Now using the fact \(\operatorname{sn}(-x,k)=-\operatorname{sn}(x,k)\) (see \cite{JD}, page 247, Equation 12.2.2.1.2) we obtain \(H'(s)=-\alpha\epsilon_2\operatorname{sn}\big[\tfrac{\beta\sqrt k}{2}(s+C_2),\gamma\big]\). We now calculate \(H''(s)\);(see \cite{JD}, page 249, Equation 12.3.1.1.2 for \(\tfrac{d}{du}\operatorname{sn}(u,k)\)), we have \footnotesize\(H''(s)=-\epsilon_2\tfrac{\alpha\beta\sqrt k}{2}\operatorname{cn}\big[\tfrac{\beta\sqrt k}{2}(s+C_2),\gamma\big]\operatorname{dn}\big[\tfrac{\beta\sqrt k}{2}(s+C_2),\gamma\big]\)\normalsize. Finally we compute the First Characteristic Function \(h(s)-\tfrac{H''(s)}{H'(s)}\) as follows:

\noindent
 \footnotesize\(
   h(s)=\tfrac{\epsilon_2\tfrac{\alpha\beta\sqrt k}{2}\operatorname{cn}\big[\tfrac{\beta\sqrt k}{2}(s+C_2),\gamma\big]\operatorname{dn}\big[\tfrac{\beta\sqrt k}{2}(s+C_2),\gamma\big]}{-\alpha\epsilon_2\operatorname{sn}\big[\tfrac{\beta\sqrt k}{2}(s+C_2),\gamma\big]}=-\tfrac{\beta\sqrt k}{2}\operatorname{cs}\big[\tfrac{\beta\sqrt k}{2}(s+C_2),\gamma\big]\operatorname{dn}\big[\tfrac{\beta\sqrt k}{2}(s+C_2),\gamma\big]
 \)\normalsize  
 
  Now let us calculate \(f=K(u)\); by Remark \ref{remark: bypassing H} we have \(K(s)=j_2(s)-C_2\). Substituting the expressions for \(u\) and \(j_2(s)\) we obtain:
  \footnotesize\[
    f(z,\bar z)=j_2(u)-C_2=-\tfrac{2\epsilon_2}{\beta\sqrt k}\,\operatorname{sn}^{-1} \Big(\tfrac{1}{\alpha}\sqrt{C_1 - \tfrac{2|c_0|}{k}\cosh\big(2\epsilon_1\Re\big[\ln\big[\tanh[\tfrac{\sqrt{c_0}(z + c_1)}{2}]\big]\big]\big)},\gamma\Big)-C_2
  \]\normalsize
  We use the fact that \(\cosh\) is an even function to get the following expression for \(f(z,\bar z)\): \footnotesize\(f(z, \bar z)=-\tfrac{2\epsilon_2}{\beta\sqrt k}\operatorname{sn}^{-1}\Big(\tfrac{1}{\alpha}\sqrt{C_1 - \tfrac{2|c_0|}{k}\cosh\big(\ln\big[\tanh[\tfrac{\sqrt{c_0}(z + c_1)}{2}]\tanh[\tfrac{\sqrt{\bar c_0}(\bar z + \bar c_1)}{2}]\big]\big)},\gamma\Big)-C_2\)\normalsize
  
   Hence we have the following pair:
   \footnotesize
  \begin{equation}
  	\label{eqn: soln k>0, c_0 neq 0, f}
  	\boxed{f(z, \bar z)=-\tfrac{2\epsilon_2}{\beta\sqrt k}\operatorname{sn}^{-1}\Big(\tfrac{1}{\alpha}\sqrt{C_1 - \tfrac{2|c_0|}{k}\cosh\big(2\ln|\tanh[\tfrac{\sqrt{c_0}(z + c_1)}{2}]|\big)},\gamma\Big)-C_2}
  \end{equation}
  \begin{equation}
  	\label{eqn: soln k>0, c_0 neq 0, h}
  	\boxed{h(s)=-\tfrac{\beta\sqrt k}{2}\operatorname{cs}\big[\tfrac{\beta\sqrt k}{2}(s+C_2),\gamma\big]\operatorname{dn}\big[\tfrac{\beta\sqrt k}{2}(s+C_2),\gamma\big]}
  \end{equation}
  where \footnotesize \(\alpha= \sqrt{C_1-\frac{2|c_0|}{k}}\), \(\beta=\sqrt{C_1+\frac{2|c_0|}{k}}\)\normalsize and \(\gamma=\tfrac{\alpha}{\beta}\)
  \begin{figure}[htbp]
  	\centering
  	\begin{subfigure}{0.32\textwidth}
  		\centering
  		\includegraphics[width=\linewidth]{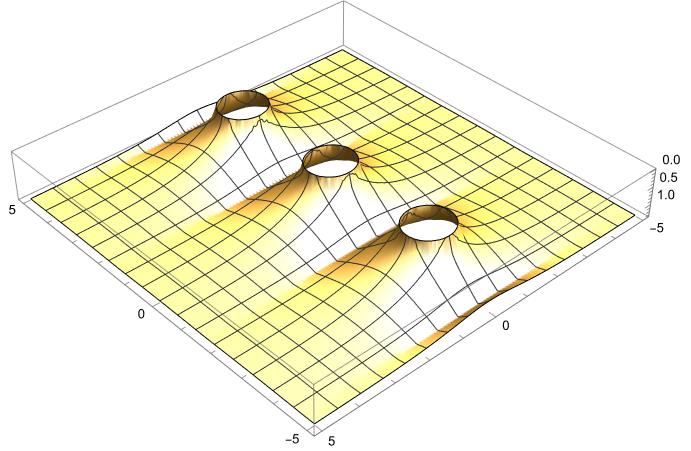}
  		\caption{ \footnotesize Top view of one layer \normalsize}
  	\end{subfigure}
  	\hfill
  	\begin{subfigure}{0.32\textwidth}
  		\centering
  		\includegraphics[width=\linewidth]{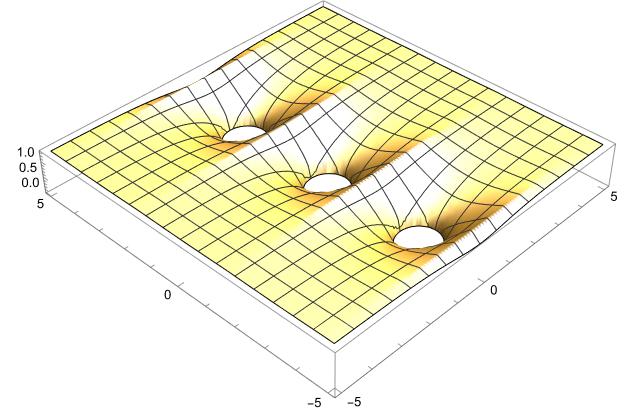}
  		\caption{\footnotesize Bottom view of one layer \normalsize}
  	\end{subfigure}
    \hfill
    \begin{subfigure}{0.32\textwidth}
    	\centering
    	\includegraphics[width=\linewidth]{Pillarsparametric.jpeg}
    	\caption{\footnotesize Impicit Surface Plot \normalsize}
    \end{subfigure}
  \caption{\footnotesize The images above depict the minimal surface \( f(z,\bar z) \) given in equation \ref{eqn: soln k>0, c_0 neq 0, f}. They were generated with the parameter values	\( k=1 \), \( c_0=1 \), \( c_1=0 \), \( C_1=10 \), \( C_2=0 \) and \( \epsilon_2=-1 \).\normalsize}
  \label{pic: Pillars}
  \end{figure}

  \textbf{Finding Non-Trivial Minimal Graph Transformations: } The NMG transformation \(g\) can be determined directly via Formula~\ref{eqn: conv h to g}, which gives \(g = \pm \int \frac{1}{\sqrt{1 \pm C^2 e^{2\int h\, dt}}}\, dt + D\); however, due to the complexity of integrating expressions involving elliptic functions, we introduce a general remark that allows us to bypass the explicit evaluation of \(\int h\), this will also be very useful for the case \(k<0\).
  \begin{remark}\label{remark: bypassing int h}
  	We know that \(h(s)=\frac{-H''(s)}{H'(s)}=-\frac{d}{dt}[\ln H'(s)]\), so \(\int h\,dt=-\ln H'(s)\); this yields \(e^{2\int h\,dt}=\big[\frac{1}{H'(s)}\big]^2\), and with this the formula for the Minimal Graph Transformation simplifies to \footnotesize\(g =  \pm \displaystyle \int \tfrac{1}{\sqrt{1\; \pm\; \left(\frac{C}{H'(s)}\right)^2}}\, dt + D=\pm \displaystyle \int \tfrac{H'(s)}{\sqrt{[H'(s)]^2\; \pm\; C^2}}\, dt + D\)\normalsize.
  \end{remark}
  We now calculate the Minimal Graph Transformation for our case using the remark and the fact that \(H'(s)=\alpha\operatorname{sn}\big[\tfrac{-\epsilon_2\beta\sqrt k}{2}(s+C_2),\gamma\big]\)
  \footnotesize\[
    g=\pm \int \tfrac{\alpha\operatorname{sn}\big[\tfrac{-\epsilon_2\beta\sqrt k}{2}(s+C_2),\gamma\big]}{\sqrt{\alpha^2\operatorname{sn}^2\big[\tfrac{-\epsilon_2\beta\sqrt k}{2}(s+C_2),\gamma\big]\; \pm\; C^2}}\, dt + D=\pm \int \tfrac{\alpha\operatorname{sn}\big[\tfrac{-\epsilon_2\beta\sqrt k}{2}(s+C_2),\gamma\big]}{\sqrt{1\; \pm\; C^2-\alpha^2\operatorname{cn}^2\big[\tfrac{-\epsilon_2\beta\sqrt k}{2}(s+C_2),\gamma\big]}}\, dt + D
  \]\normalsize

  Since \(\alpha>0\), we factor \(\alpha\) out of the numerator and denominator; noting that \(\operatorname{sn}\) and \(\operatorname{cn}\) are respectively odd and even functions (see e.g. \cite{JD}, page 247, Equation: 12.2.2.1.2 and 12.2.2.1.3), and because of \(\pm1\) infront of the integral, we may remove \(-\epsilon_2\) from the arguments of \(\operatorname{cn}\) and \(\operatorname{sn}\), yielding \(g=\pm \displaystyle \int \tfrac{\operatorname{sn}\!\big[\frac{\beta\sqrt k}{2}(s+C_2),\gamma\big]}{\sqrt{\big[\frac{1\pm C^2}{\alpha^2}\big]-\operatorname{cn}^2\!\big[\frac{\beta\sqrt k}{2}(s+C_2),\gamma\big]}}\, dt + D\). For this expression to be well‑defined we require \(\frac{1\pm C^2}{\alpha^2}>0\); denote this quantity by \(C_3^2\). Recalling that \(\operatorname{dn}(u,\gamma)=\sqrt{1-\gamma^2+\gamma^2\operatorname{cn}^2(u,\gamma)}\) (see \cite{JD}, page: 248, Equation: 12.2.2.2.3) and that \(\frac{d}{du}\big[\operatorname{cn}(u,\gamma)\big]=-\operatorname{sn}(u,\gamma)\operatorname{dn}(u,\gamma)\) (see \cite{JD}, page: 249, Equation: 12.3.1.1.2), we proceed as follows:
 \footnotesize\[
  g=\pm \tfrac{2}{\beta\sqrt k}\displaystyle \int \tfrac{-\frac{\beta\sqrt k}{2}\operatorname{sn}\!\big[\frac{\beta\sqrt k}{2}(s+C_2),\gamma\big]\operatorname{dn}\!\big[\frac{\beta\sqrt k}{2}(s+C_2),\gamma\big]}{\sqrt{\big[C_3^2-\operatorname{cn}^2\!\big[\frac{\beta\sqrt k}{2}(s+C_2),\gamma\big]\big]\big[1-\gamma^2+\gamma^2\operatorname{cn}^2\!\big[\frac{\beta\sqrt k}{2}(s+C_2),\gamma\big]\big]}}\, dt + D   
 \]\normalsize
 Let \(\Gamma^2=\frac{1-\gamma^2}{\gamma^2}>0\) and set \(v=\operatorname{cn}\!\big[\tfrac{\beta\sqrt k}{2}(s+C_2),\gamma\big]\); then the integral becomes \(g=\frac{\pm2}{\beta\sqrt{k}\gamma}\int\frac{dv}{\sqrt{[C_3^2-v^2]\cdot[\Gamma^2+v^2]}}+D=\frac{\pm2}{\alpha\sqrt{k}}\int\frac{dv}{\sqrt{[C_3^2-v^2]\cdot[\Gamma^2+v^2]}}+D.\) This is an elliptic integral whose corresponding inverse Jacobi elliptic functions can be found in \cite{La} page~50, Equation~3.2.7; we thus obtain \(g=\frac{\pm2}{\alpha\sqrt{k(C_3^2+\Gamma^2)}}\operatorname{sd}^{-1}\!\big[\tfrac{\sqrt{(C_3^2+\Gamma^2)}\;v}{\Gamma C_3},\tfrac{C_3}{\sqrt{C_3^2+\Gamma^2}}\big]+D\). Substituting back \(v\) yields:
 \[
   \boxed{g=\tfrac{\pm2}{\alpha\sqrt{k(C_3^2+\Gamma^2)}}\operatorname{sd}^{-1}\Big[\tfrac{\sqrt{(C_3^2+\Gamma^2)}\operatorname{cn}\big[\frac{\beta\sqrt k}{2}(s+C_2),\;\gamma\big]}{\Gamma C_3},\tfrac{C_3}{\sqrt{C_3^2+\Gamma^2}}\Big]+D}
 \]
 
 \textbf{Finding the Transformed Minimal Surface.} We now compute the transformed minimal surface given by \(g\circ f\); before doing so we evaluate \(\operatorname{cn}\big[\tfrac{\beta\sqrt k}{2}(f+C_2),\gamma\big]\), recalling that \(\operatorname{cn}\) is an even function, which yields:
 \footnotesize\[
   \operatorname{cn}\big[\tfrac{\beta\sqrt k}{2}(f+C_2),\;\gamma\big]=\operatorname{cn}\Big[\operatorname{sn}^{-1}\big[\tfrac{1}{\alpha}\sqrt{C_1-\tfrac{2|c_0|}{k}\cosh\big[2\ln|\tanh[\tfrac{\sqrt{c_0}(z + c_1)}{2}]|\big]},\gamma\big]\Big]
 \]\normalsize
 Now \(\operatorname{sn}^{-1}(x,k)=\operatorname{cn}^{-1}\big(\sqrt{1-x^2},k\big)\) (see \cite{BF} Page: 31, Equation 131.01); this implies that \(\operatorname{cn}\!\big[\operatorname{sn}^{-1}(x,k)\big]=\sqrt{1-x^2}\). Hence:
 \footnotesize\[
   \operatorname{cn}\big[\tfrac{\beta\sqrt k}{2}(f+C_2),\;\gamma\big]=\tfrac{1}{\alpha}\sqrt{\tfrac{2|c_0|}{k}\cosh\big[2\ln|\tanh[\tfrac{\sqrt{c_0}(z + c_1)}{2}]|\big]-\tfrac{2|c_0|}{k}}
 \]\normalsize
 Hence: \footnotesize\(g\circ f=\tfrac{\pm2}{\alpha\sqrt{k(C_3^2+\Gamma^2)}}\operatorname{sd}^{-1}\Big[\tfrac{\sqrt{(C_3^2+\Gamma^2)}\sqrt{\frac{2|c_0|}{k}\cosh\big[2\ln|\tanh[\frac{\sqrt{c_0}(z + c_1)}{2}]|\big]-\frac{2|c_0|}{k}}}{\alpha\Gamma C_3},\tfrac{C_3}{\sqrt{C_3^2+\Gamma^2}}\Big]+D\)\normalsize

 We use the identity \(\operatorname{sd}^{-1}\!\big([x/k'],k\big)+\operatorname{cn}^{-1}(x,k)=\operatorname{sn}^{-1}(1,k)\) (see \cite{La} Page: 54, Equation: 3.2.18). Setting \(k=\tfrac{C_3}{\sqrt{C_3^2+\Gamma^2}}\) and consequently \(k'=\tfrac{\Gamma}{\sqrt{C_3^2+\Gamma^2}}\), we obtain the formula for the transformed minimal surface in this case:
 \footnotesize\[
  \boxed{ g\circ f=\tfrac{\pm2}{\alpha\sqrt{k(C_3^2+\Gamma^2)}}\operatorname{cn}^{-1}\Big[\tfrac{\sqrt{\frac{2|c_0|}{k}\cosh\big[2\ln|\tanh[\frac{\sqrt{c_0}(z + c_1)}{2}]|\big]-\frac{2|c_0|}{k}}}{\alpha C_3},\tfrac{C_3}{\sqrt{C_3^2+\Gamma^2}}\Big]+D+\operatorname{sn}^{-1}(1,k)}
 \]\normalsize

 \textbf{Proving that \(f\) and \(g\circ f\) belong to the same class of minimal surfaces.} First, set \(n^2=\tfrac{4|c_0|}{k}>0\); then one readily verifies that \(\beta=\sqrt{\alpha^2+n^2}\) and \(\gamma=\tfrac{\alpha}{\beta}=\tfrac{\alpha}{\sqrt{\alpha^2+n^2}}\). Now define \(a=\alpha C_3\) and \(b=\alpha\sqrt{C_3^2+\Gamma^2}\); since \(\alpha^2\Gamma^2=\alpha^2\tfrac{1-\gamma^2}{\gamma^2}=\beta^2-\alpha^2=\tfrac{4|c_0|}{k}=n^2\), which implies that \(k=\tfrac{4|c_0|}{n^2}\) and we obtain \(b=\sqrt{a^2+n^2}\) and \(\tfrac{C_3}{\sqrt{C_3^2+\Gamma^2}}=\tfrac{\alpha C_3}{\sqrt{\alpha^2 C_3^2+\alpha^2\Gamma^2}}=\tfrac{a}{b}=\tfrac{a}{\sqrt{a^2+n^2}}\). Denoting the constant \(D+\operatorname{sn}^{-1}(1,k)\) by \(C_4\), we arrive at a formula for \(g\circ f\).
 \footnotesize\[
   g\circ f=\tfrac{\pm n}{\sqrt{|c_0|(a^2+n^2)}}\operatorname{cn}^{-1}\Big[\tfrac{1}{a}\sqrt{n^2\big[\cosh\big[2\ln|\tanh(\tfrac{\sqrt{c_0}(z + c_1)}{2})|\big]-1\big]},\tfrac{a}{\sqrt{a^2+n^2}}\Big]+C_4
 \]\normalsize
 Using the identity \(\operatorname{sn}^{-1}(x,k)=\operatorname{cn}^{-1}\big(\sqrt{1-x^2},k\big)\) applied to Equation~\ref{eqn: soln k>0, c_0 neq 0, f} we obtain: \footnotesize\(f=\tfrac{\pm2}{\beta\sqrt k}\operatorname{cn}^{-1}\Big(\tfrac{1}{\alpha}\sqrt{\tfrac{2|c_0|}{k}\cosh\big(2\ln|\tanh[\tfrac{\sqrt{c_0}(z + c_1)}{2}]|\big)-\tfrac{2|c_0|}{k}},\gamma\Big)-C_2\)\normalsize.  Making the appropriate substitutions, we obtain:
 \footnotesize\[
   f=\tfrac{\pm n}{\sqrt{|c_0|(\alpha^2+n^2)}}\operatorname{cn}^{-1}\Big[\tfrac{1}{\alpha}\sqrt{n^2\big[\cosh\big[2\ln|\tanh(\tfrac{\sqrt{c_0}(z + c_1)}{2})|\big]-1\big]},\tfrac{\alpha}{\sqrt{\alpha^2+n^2}}\Big]-C_2
 \]\normalsize
  We see that both \(f\) and \(g\circ f\) have the same functional form and therefore belong to the same family of minimal surfaces. Finally, recall that \(\gamma=\tfrac{\alpha}{\sqrt{\alpha^2+n^2}}\) is the modulus of the elliptic function \(\operatorname{cn}\); denote the corresponding conjugate modulus by \(\gamma'=\tfrac{n}{\sqrt{\alpha^2+n^2}}\). Similarly, for the transformed minimal graph surface we set \(\Lambda=\tfrac{a}{\sqrt{a^2+n^2}}\) as the modulus of the elliptic function \(\operatorname{cn}\) that defines the transformed surface and \(\Lambda'=\tfrac{n}{\sqrt{a^2+n^2}}\) as its conjugate modulus. We also see that \(\tfrac{n}{\alpha}=\tfrac{\gamma'}{\gamma}\) and \(\tfrac{n}{a}=\tfrac{\Lambda'}{\Lambda}\). Using these notations we obtain the final expression for the solution in this case: \(k>0\)
  
  \textbf{Solution:}  
  
  The characterisation of the solution to the NMG Transformation Problem \ref{eqn: Complex Non-Trivial MGT: 2} under the Choice of Domain as in \ref{section: assumptions} and \ref{section: assumption 2} and under the condition that the real constant \(k>0\) and \(c_0\neq0\), is as follows:  
  
  If a Minimal Graph \(f\) with an NMG Transformation \(g\) exists under the above criteria, then it must be given by the formula:
  \[
    \boxed{f=\tfrac{\pm \gamma'}{\sqrt{|c_0|}}\operatorname{cn}^{-1}\Big[\tfrac{\gamma'}{\gamma}\sqrt{\cosh\big[2\ln|\tanh(\tfrac{\sqrt{c_0}(z + c_1)}{2})|\big]-1}\;,\;\gamma\Big]-C_2}
  \]
  where \(z=x+iy\in\mathbb{C}\). Note that the constants \(\gamma\) can attain any positive value between \(0\) and \(1\) by their definition, so we treat them as arbitrary parameters which act as modulus for the elliptic function. Next substituting the constants \(\gamma, \Lambda\), appropriately into the formula for the NMG transformation \(g\) yields:
  \[
  \boxed{g=\tfrac{\pm \Lambda'}{\sqrt{|c_0|}}\operatorname{sd}^{-1}\Big[\tfrac{\gamma}{\gamma'\Lambda}\operatorname{cn}\big[\tfrac{\sqrt{|c_0|}}{\gamma'}(s+C_2),\;\gamma\big],\Lambda\Big]+D.}
  \]
  Finally, the transformed minimal graph surface under the NMG transformation \(g\) is also a minimal graph surface of the same type as \(f\) and is given by:
  \[
   \boxed{ g\circ f=\tfrac{\pm \Lambda'}{\sqrt{|c_0|}}\operatorname{cn}^{-1}\Big[\tfrac{\Lambda'}{\Lambda}\sqrt{\cosh\big[2\ln|\tanh(\tfrac{\sqrt{c_0}(z + c_1)}{2})|\big]-1}\;,\;\Lambda\Big]+C_4}
  \]
  Again, the the constants \(\Lambda\) can attain any positive value between \(0\) and \(1\) by their definition, so we treat them as arbitrary parameters which act as modulus for the elliptic function.
  \begin{remark}
  	The minimal graph surface presented above constitutes, to the best of the author's knowledge, a new class of minimal surfaces. A qualitative study of the geometry of the minimal graph surface around the singular points at which the conditions on the choice of domain in \ref{section: assumptions} and \ref{section: assumption 2} fail will be presented in a later section for this as well as other minimal graph surfaces presented in this paper.
  \end{remark}
  \section{Solution to the Weakened Equation for \(k<0\)}
  In this section we solve Equation \ref{eqn: weakened eqn 2 k<0}: \([\ln(\mathcal{A}'(z))]_{zz} = -[\mathcal{A}'(z)]^2\) analytically, obtaining all minimal surfaces with non‑zero first characteristic function \(h(s)\) and negative constant \(k\), together with their associated NMG transformations \(g\) and corresponding transformed minimal surfaces. Substituting \(\mathcal{B}(z)=\ln[\mathcal{A}'(z)]\) gives \(\mathcal{B}_{zz}=-e^{2\mathcal{B}}\). Multiplying both sides by \(2\mathcal{B}_z\) leads to \(2\mathcal{B}_z\mathcal{B}_{zz}=[\mathcal{B}_z^2]_z=-[e^{2\mathcal{B}}]_z=-e^{2\mathcal{B}}2\mathcal{B}_z\); hence:
  \[
  \mathcal{B}_z^2=-e^{2\mathcal{B}}+c_0 \qquad\text{for some complex constant} c_0
  \]
  then:
  \[
   \mathcal{B}_z=\pm \sqrt{c_0-e^{2\mathcal{B}}}
  \]
  \textbf{This leads us to 2 cases}: $c_0=0$ and $c_0\neq0$
  
  \subsection{Subcase 1: $c_0=0$}
   We set the complex constant \(c_0 = 0\). Then \(\mathcal{B}_z = \pm \sqrt{-e^{2\mathcal{B}}} = \pm i e^{\mathcal{B}}\). Denoting \(\pm 1 = \epsilon_0\), we obtain \(dz = -\epsilon_0 i e^{-\mathcal{B}} d\mathcal{B}\); integrating gives \(\epsilon_0 i e^{-\mathcal{B}} = z + c_1\) for some complex constant \(c_1\). Consequently \(-\mathcal{B} = \ln[-\epsilon_0 i (z + c_1)]\), so \(\mathcal{B}=\ln\!\big[\tfrac{-\epsilon_0 i}{c_1 + z}\big]\). Since \(\mathcal{B} = \ln[\mathcal{A}'(z)]\), we have \(\ln[\mathcal{A}'(z)]=\ln\!\big[\tfrac{-\epsilon_0 i}{c_1 + z}\big]\); exponentiating yields \(\mathcal{A}'(z)=\tfrac{-\epsilon_0 i}{c_1 + z}\). Integrating gives \(\mathcal{A}(z)=-\epsilon_0 i\ln(c_1 + z)+c_2\) with constants \(c_1,c_2\in\mathbb{C}\). Recalling that \(A(z)=\frac{\mathcal{A}}{\sqrt{k}}\) and absorbing the factor \(\frac{1}{\sqrt{k}}\) into the constant \(c_2\), we write \(A(z)= \frac{-\epsilon_0 i}{\sqrt{|k|}}\ln(c_1 + z)+c_2\). By definition \(u(z,\bar{z}) = A(z) + \overline{A(z)}\); therefore,
  \[
  u(z, \bar{z}) = \tfrac{-\epsilon_0 i}{\sqrt{|k|}} \ln(z + c_1) + c_2 + \tfrac{ \epsilon_0 i}{\sqrt{|k|}} \ln(\bar{z} + \bar{c}_1) + \bar{c}_2 = \tfrac{-2\epsilon_0 i}{\sqrt{|k|}} \Im[\ln(z + c_1)] + 2\Re[c_2].
  \]
  Thus we obtain the formula for \(u\): \(\boxed{u(z,\bar z) = \tfrac{2\epsilon_0}{\sqrt{|k|}} \Im[\ln(c_1 + z)] + 2\Re[c_2]}\) (We used the fact: \(z-\bar z=2i\Im(z)\)) for some complex constants \(c_1\) and \(c_2\), where \(\epsilon_0 = \pm 1\). Now we compute the second characteristic function \(j\). We have \(\frac{u_z^2u_{\bar z\bar z}+u_{\bar z}^2u_{zz}}{u_z u_{\bar z}} = j(u)\). The derivatives are \(u_z = \tfrac{-\epsilon_0 i}{(c_1 + z)\sqrt{|k|}},\; u_{\bar z} = \tfrac{\epsilon_0 i}{(\bar c_1 + \bar z)\sqrt{|k|}},\; u_{zz} = \tfrac{\epsilon_0 i}{(c_1 + z)^2\sqrt{|k|}},\; u_{\bar z\bar z} = \tfrac{-\epsilon_0 i}{(\bar c_1 + \bar z)^2\sqrt{|k|}}\). Substituting these into the expression for \(j(u)\) yields:
  \[
  \tfrac{u_z^2u_{\bar z\bar z}+u_{\bar z}^2u_{zz}}{u_z u_{\bar z}} = \big[\tfrac{0}{(\bar c_1 + \bar z)^2 (c_1 + z)^2 |k| \sqrt{|k|}}\big]/\big[\tfrac{1}{(\bar c_1 + \bar z)(c_1 + z) |k|}\big]= 0 = j(u).
  \]
  Clearly this does not provide a solution to System 2 of Theorem \ref{thm: Equivalence thm} because it violates the non‑vanishing requirement for the function \(j\); therefore we proceed to the next case, \(c_0\neq 0\).
  
  \subsection{Subcase 2: $c_0\neq0$}\label{section: c_0 neq 0 k-negative}
  
  \textbf{Finding solutions \((u,j)\) to the Modified Nontrivial Minimal Graph Transformation Problem \ref{eqn: Modified Complex Non-Trivial MGT: 2}:} First, we solve the equation \(\mathcal{B}_z=\pm \sqrt{c_0-e^{2\mathcal{B}}}\), set \(\epsilon_0=\pm1\), so \(\epsilon_0 \int\tfrac{d\mathcal{B}}{\sqrt{c_0-e^{2\mathcal{B}}}}=z+c_1\). Substituting \(v=\sqrt{c_0-e^{2\mathcal{B}}}\) gives \(dv=\tfrac{-e^{2\mathcal{B}}d\mathcal{B}}{\sqrt{c_0-e^{2\mathcal{B}}}}=\tfrac{v^2-c_0}{v}d\mathcal{B}\); hence \(\epsilon_0 \int\tfrac{d\mathcal{B}}{\sqrt{c_0-e^{2\mathcal{B}}}}=\epsilon_0\int \tfrac{d\mathcal{B}}{v}=\epsilon_0 \int \tfrac{dv}{v^2-c_0}\). This integral was already computed in subsection \ref{section: c_0 neq 0 k-positive} and equals \(\tfrac{-\epsilon_0}{\sqrt c_0}\coth^{-1}\!\big(\tfrac{v}{\sqrt c_0}\big)\). Substituting back \(v=\sqrt{c_0-e^{2\mathcal{B}}}\) yields \footnotesize\(z+c_1=\tfrac{-\epsilon_0}{\sqrt c_0}\coth^{-1}\!\big[\frac{\sqrt{c_0-e^{2\mathcal{B}}}}{\sqrt c_0}\big]\)\normalsize, from which we obtain \footnotesize\(-e^{2\mathcal{B}}=c_0\big[\coth^2\!\big(-\epsilon_0\sqrt c_0[z+c_1]\big)-1\big]=c_0\big[\coth^2\!\big(\sqrt c_0[z+c_1]\big)-1\big]=c_0\operatorname{csch}^2\!\big(\sqrt c_0[z+c_1]\big)\)\normalsize. Consequently, \(e^{\mathcal{B}}=i\epsilon_1\sqrt c_0\operatorname{csch}\!\big(\sqrt c_0[z+c_1]\big)\) with \(\epsilon_1=\pm1\). We have the reltion: \(\mathcal{B}(z)=\ln[\mathcal{A}'(z)]\), we exponentiate on both sides to get \(e^\mathcal{B}=\mathcal{A}'\), so \(\mathcal{A}'(z)=i\epsilon_1\sqrt c_0\operatorname{csch}\!\big(\epsilon_0\sqrt c_0[z+c_1]\big)\). The integration of \(\mathcal{A}'(z)\) to obtain \(\mathcal{A}(z)\) proceeds exactly as in Subsection \ref{section: c_0 neq 0 k-positive}, except for the constant factor \(i\); hence \(\mathcal{A}(z)=i\epsilon_1\ln\!\big[\tanh\!\big(\tfrac{\sqrt c_0(z+c_1)}{2}\big)\big]+c_2\). Now we determine \(A(z)\) using the relation \(A=\frac{\mathcal{A}}{\sqrt{|k|}}\), giving \(A(z)=\frac{i\epsilon_1}{\sqrt{|k|}}\ln\!\big[\tanh\!\big(\tfrac{\sqrt c_0(z+c_1)}{2}\big)\big]+c_2\). Since \(u(z,\bar z)=A(z)+\overline{A(z)}\), we get formula for \(u(z,\bar z):\) 
  \footnotesize\(
    u(z,\bar z)=\tfrac{i\epsilon_1}{\sqrt{|k|}}\ln\!\big[\tanh\!\big(\tfrac{\sqrt c_0(z+c_1)}{2}\big)\big]+c_2-\tfrac{i\epsilon_1}{\sqrt{|k|}}\ln\!\big[\tanh\!\big(\tfrac{\sqrt{\bar c_0}(\bar z+\bar c_1)}{2}\big)\big]+\bar c_2
  \)\normalsize
  
  Hence we get \footnotesize\(\boxed{u(z,\bar z)=\tfrac{i\epsilon_1}{\sqrt{|k|}}\ln\big[\tanh(\tfrac{\sqrt c_0(z+c_1)}{2})\coth(\tfrac{\sqrt{\bar c_0}(\bar z+\bar c_1)}{2})\big]+2\Re(c_2)}.\;\)\normalsize We calculate various derivatives of \(u\): \(u_z=\tfrac{i\epsilon_1\sqrt c_0}{\sqrt{|k|}}\operatorname{csch}\!\big(\sqrt c_0[z+c_1]\big)\) and \(u_{\bar z}=\tfrac{-i\epsilon_1\sqrt{\bar c_0}}{\sqrt{|k|}}\operatorname{csch}\!\big(\sqrt{\bar c_0}[\bar z+\bar c_1]\big)\), the second derivatives are: \(u_{zz}=-\tfrac{i\epsilon_1 c_0}{\sqrt{|k|}} \operatorname{csch}(\sqrt{c_0}[z + c_1]) \operatorname{coth}(\sqrt{c_0}[z + c_1])\) and \footnotesize\(u_{\bar z \bar z} = \tfrac{i\epsilon_1 \bar{c}_0}{\sqrt{|k|}} \operatorname{csch}(\sqrt{\bar{c}_0}[\bar{z} + \bar{c}_1]) \operatorname{coth}(\sqrt{\bar{c}_0}[\bar{z} + \bar{c}_1])\). \( \tfrac{u_{zz}u_{\overline{z}}^2+u_{\overline{z}\overline{z}}u_z^2}{u_zu_{\overline{z}}}=\tfrac{i\epsilon_1|c_0|}{\sqrt{|k|}}\cdot\tfrac{\cosh(\sqrt c_0[z+c_1])-\cosh(\sqrt{\bar c_0}\;[\bar z+\bar c_1])}{\sinh\big(\sqrt c_0[z+c_1]\big)\sinh\big(\sqrt{\bar c_0}\;[\bar z+\bar c_1]\big)}=j(u)
  \)\normalsize 
  
  Let \(a = \tfrac{i\epsilon_1 |c_0|}{\sqrt{|k|}}\) and \(2v = \sqrt{c_0}[z + c_1]\). Using the hyperbolic identities \(\cosh(2x) - \cosh(2y) = 2\sinh(x + y)\sinh(x - y)\) and \(\sinh(2x) = 2\sinh(x)\cosh(x)\), we obtain \(j(u) = a \cdot \frac{\cosh(2v) - \cosh(2\bar{v})}{\sinh(2v)\sinh(2\bar{v})} = a \cdot \frac{\sinh(v + \bar{v})\sinh(v - \bar{v})}{2\sinh(v)\cosh(v)\sinh(\bar{v})\cosh(\bar{v})}=a\cdot \frac{\sinh(v+\bar v)\sinh(v-\bar v)}{2\sinh(v)\cosh(\bar v)\sinh(\bar v)\cosh(v)}\).  Employing the identity \(2\sinh(x)\cosh(y) =\tfrac{1}{2}\big(\sinh(x+y)+\sinh(x-y)\big)\) yields \(j(u) = 2a\cdot\frac{\sinh(v+\bar v)\sinh(v-\bar v)}{(\sinh(v+\bar v)+\sinh(v-\bar v))\cdot (\sinh(\bar v+v)+\sinh(\bar v-v))}\).  Set \(\mathbf{a}=2a\), \(\mathbf{b}=\sinh(v+\bar v)\) and \(\mathbf{c}=\sinh(v-\bar v)\); then \(\tfrac{j(u)}{\mathbf{a}}=\frac{\mathbf{b}\mathbf{c}}{\mathbf{b}^2-\mathbf{c}^2}\) and call this \(U\). Now \(e^{-i\sqrt{|k|}\epsilon_1[u-2\Re(c_2)]}=\tfrac{\sinh(v)\cosh(\bar v)}{\cosh(v)\sinh(\bar v)}\), so \(e^{-i\sqrt{|k|}\epsilon_1[u-2\Re(c_2)]}=\tfrac{\mathbf{b}+\mathbf{c}}{\mathbf{b}-\mathbf{c}}=\tfrac{(\mathbf{b}+\mathbf{c})^2}{\mathbf{b}^2-\mathbf{c}^2}\) and call this \(V\). Observe that \(V-4U=\frac{1}{V}\), hence \(U=\tfrac{1}{4}\big(V-\tfrac{1}{V}\big)\). Then \(j(u)=\mathbf{a} U=\tfrac{2a}{4}\big(V-\tfrac{1}{V}\big)\), i.e. \(\tfrac{2i\epsilon_1|c_0|}{4\sqrt{|k|}}\big(e^{-i\sqrt{|k|}\epsilon_1[u-2\Re(c_2)]}-e^{i\sqrt{|k|}\epsilon_1[u-2\Re(c_2)]}\big)=\tfrac{\epsilon_1 |c_0|}{\sqrt{|k|}}\sin\big(\sqrt{|k|}\epsilon_1[u-2\Re(c_2)]\big)\). Using the fact that \(\sin(x)\) is an odd function  we finally obtain the following pair:
  \footnotesize\[
    \boxed{u(z,\bar z)=\tfrac{i\epsilon_1}{\sqrt{|k|}}\ln\big[\tanh(\tfrac{\sqrt c_0(z+c_1)}{2})\coth(\tfrac{\sqrt{\bar c_0}(\bar z+\bar c_1)}{2})\big]+2\Re(c_2)}\;\;\text{and}\;\; \boxed{j(s)=\tfrac{|c_0|}{\sqrt{|k|}}\sin\big(\sqrt{|k|}[s-2\Re(c_2)]\big)}
  \]\normalsize
  We note that the second characteristic function \(j\) vanishes only at \footnotesize\(s=2\Re(c_2)+\tfrac{n\pi}{\sqrt{|k|}}\)\normalsize for each integer \(n\); hence \(j\) is never zero in the intervals \footnotesize\(J'=\big(2\Re(c_2)+\tfrac{n\pi}{\sqrt{|k|}},\,2\Re(c_2)+\tfrac{(n+1)\pi}{\sqrt{|k|}}\big)\;\)\normalsize for every integer \(n\). Likewise, the complex derivatives \(u_z=\tfrac{i\epsilon_1 \sqrt{c_0}}{\sqrt{k}}\operatorname{csch}(\sqrt{c_0}[z+c_1])\) and \(u_{\bar{z}}=\tfrac{-i\epsilon_1 \sqrt{\bar{c}_0}}{\sqrt{k}}\operatorname{csch}(\sqrt{\bar{c}_0}[\bar{z}+\bar{c}_1])\) are non‑zero everywhere on \(\mathbb{C}\) but become \(\infty\) at \(z=-c_1+\tfrac{2\pi i n}{\sqrt{c_0}}\) for all integers \(n\). Therefore, on \(\Omega=\mathbb{C}\setminus\{-c_1+\tfrac{2\pi i n}{\sqrt{c_0}}\mid n\in\mathbb{Z}\}\) the function \(u\) is well defined. Choosing \(\Omega_0=u^{-1}(J')\cap\Omega\) ensures that \(u_z,u_{\bar{z}}\neq0\) on \(\Omega_0\) and we set \(J=u(\Omega_0)\). Consequently the pair \((u,j)\) yields a valid solution of the Modified NMG Transformation Problem, with \(u:\Omega_0\to\mathbb{R}\) and \(j:J\to\mathbb{R}\) satisfying the required non‑vanishing conditions on their domains; thus \((u,j)\) fulfills the hypotheses of Theorem \ref{thm: Equivalence thm}.
  
  By Theorem \ref{thm: Equivalence thm}, there exists a corresponding Minimal Graph Surface $f$ and an associated First Characteristic Function $h$ that together solve the NMG Transformation Problem. Next, applying the converse part of Theorem \ref{thm: Equivalence thm}, we can construct all possible open subintervals $J_0 \subseteq J$ and corresponding pairs $(f, h)$ which solve the NMG Transformation Problem \ref{eqn: Complex Non-Trivial MGT: 2} on the domains $\Omega = u^{-1}(J_0)$ and $I = f(\Omega)$ such that the corresponding pair $(u_0, j_0)$ for the modified problem satisfies $u_0 = u|_{\Omega}$ and $j_0 = j|_{J_0}$.
  
  We now proceed to determine the explicit form of the minimal surface $f$ and the associated first characteristic function $h$.\\
  
  \textbf{Finding solutions \((f,h)\) to the Modified Nontrivial Minimal Graph Transformation Problem \ref{eqn: Complex Non-Trivial MGT: 2}:} We begin by computing the function: \(j_1(s) = -2\int j(s)\,ds + C_1 = -2\int \tfrac{|c_0|}{\sqrt{|k|}}\sin\big(\sqrt{|k|}[s-2\Re(c_2)]\big)\,ds + C_1 = C_1 + \tfrac{2|c_0|}{|k|}\cos\big(\sqrt{|k|}[s-2\Re(c_2)]\big)\). Next we calculate \footnotesize\(j_2(s) = \int \tfrac{1}{\sqrt{j_1(s)}}\,ds = \int \tfrac{1}{\sqrt{C_1 + \tfrac{2|c_0|}{|k|}\cos\big(\sqrt{|k|}[s-2\Re(c_2)]\big)}}\,ds\)\normalsize. For this to be real‑valued the quantity inside the square root must be positive, i.e. \(C_1 + \tfrac{2|c_0|}{|k|}\cos\big(\sqrt{|k|}[s-2\Re(c_2)]\big) > 0\) at least on some subdomain \(J_0\subseteq J\) of \(j_1\). A necessary and sufficient condition for \(j_1(s)\) to be positive on some interval is \(C_1 + \tfrac{2|c_0|}{|k|} > 0\); we choose \(C_1\) accordingly. With this choice we evaluate \(j_2(s)\) via the following substitution \footnotesize\(x = \sqrt{C_1 + \tfrac{2|c_0|}{|k|}\cos\big(\sqrt{|k|}[s-2\Re(c_2)]\big)}\)\normalsize, which yields \footnotesize\(dx = \tfrac{-\frac{2|c_0|}{\sqrt{|k|}}\sin\big(\sqrt{|k|}[s-2\Re(c_2)]\big)}{2\sqrt{C_1 + \frac{2|c_0|}{|k|}\cos\big(\sqrt{|k|}[s-2\Re(c_2)]\big)}}\,ds\)\normalsize. Hence \footnotesize\(2x\,dx = -\tfrac{2|c_0|}{\sqrt{|k|}}\sin\big(\sqrt{|k|}[s-2\Re(c_2)]\big)ds = -\frac{2\epsilon_2|c_0|}{\sqrt{|k|}}\sqrt{1-\cos^2\big(\sqrt{|k|}[s-2\Re(c_2)]\big)}ds\)\normalsize. We observe that \(\cos^2\big(\sqrt{|k|}[s-2\Re(c_2)]\big)=\tfrac{|k|^2(x^2-C_1)^2}{4|c_0|^2}\), so we get:   
  \(
    j_2(s)=\int \frac{ds}{x}=- \int \tfrac{dx}{\frac{\epsilon_2|c_0|}{\sqrt{|k|}}\sqrt{1-\frac{|k|^2(x^2-C_1)^2}{4|c_0|^2}}}=-\tfrac{2\epsilon_2}{\sqrt{|k|}} \int \tfrac{dx}{\sqrt{\big[\frac{2|c_0|}{|k|}\big]^2-\,[x^2-C_1]^2}}
  \)
  
  Hence we get \footnotesize\(j_2(s)=-\tfrac{2\epsilon_2}{\sqrt{|k|}} \displaystyle \int \tfrac{dx}{\sqrt{\big[\frac{2|c_0|}{|k|}+C_1-x^2\big]\big[\frac{2|c_0|}{|k|}-C_1+x^2\big]}}\)\normalsize. Set \(\alpha^2=\frac{2|c_0|}{|k|}+C_1\) as this must be positive. Now consider \(\frac{2|c_0|}{|k|}-C_1\), we have the following three cases: 
  
  \(\boxed{\textit{Case 1:}\;\,\tfrac{2|c_0|}{|k|}-C_1=0}\qquad\) \(\boxed{\textit{Case 2:}\;\,\tfrac{2|c_0|}{|k|}-C_1>0} \qquad\) \(\boxed{\textit{Case 3:}\;\,\tfrac{2|c_0|}{|k|}-C_1<0}\)\\
  
  \textit{CASE 1}: We obtain \(j_2(s)=-\tfrac{2\epsilon_2}{\sqrt{|k|}}\int \tfrac{dx}{x\sqrt{\alpha^2-x^2}}\). Referring to \cite{JD} Page: 172 Equation: 4.3.3.1.15 gives \(j_2(s)=-\tfrac{2\epsilon_2}{\sqrt{|k|}}\cdot\tfrac{1}{\alpha} \ln\!\big|\tfrac{\alpha-\sqrt{\alpha^2-x^2}}{x}\big|\). Since \(\alpha>x>0\) by definition, this simplifies to \(j_2(s)=-\tfrac{2\epsilon_2}{\sqrt{|k|}}\cdot\tfrac{1}{\alpha}\ln\!\big(\tfrac{\alpha-\sqrt{\alpha^2-x^2}}{x}\big)\). Using the logarithmic representation of \(\operatorname{sech}^{-1}\) (see \cite{AS} Page: 88 Equation: 4.6.24) we find \footnotesize\(j_2(s)=-\tfrac{2\epsilon_2}{\alpha \sqrt{|k|}}\operatorname{sech}^{-1}[\tfrac{x}{\alpha}]\)\normalsize, which is basically \footnotesize\(-\tfrac{2\epsilon_2}{\alpha \sqrt{|k|}}\operatorname{sech}^{-1}\big[\tfrac{1}{\alpha}\sqrt{C_1 + \tfrac{2|c_0|}{|k|}\cos\big(\sqrt{|k|}[s-2\Re(c_2)]\big)}\;\big]\)\normalsize. Due to the complexity of this expression we invoke Remark~\ref{remark: bypassing H} to bypass the explicit construction of the diffeomorphism \(H\). First note that \footnotesize\(j_2(H(s))=-\tfrac{2\epsilon_2}{\alpha \sqrt{|k|}}\operatorname{sech}^{-1}\!\Big(\tfrac{1}{\alpha}\sqrt{j_1(s)}\Big)\)\normalsize. By Remark~\ref{remark: bypassing H} we have \footnotesize\(s+C_2=j_2(H(s))=-\tfrac{2\epsilon_2}{\alpha \sqrt{|k|}}\operatorname{sech}^{-1}\!\Big(\tfrac{1}{\alpha}\sqrt{j_1(H(s))}\Big)=-\tfrac{2\epsilon_2}{\alpha \sqrt{|k|}}\operatorname{sech}^{-1}\!\Big(\tfrac{1}{\alpha}H'(s)\Big)\)\normalsize, which yields \footnotesize\(H'(s)=\alpha\operatorname{sech}\!\big[\tfrac{-\alpha \sqrt{|k|}}{2\epsilon_2}(s+C_2)\big]\)\normalsize. Because \(\operatorname{sech}\) is even, this becomes \footnotesize\(H'(s)=\alpha\operatorname{sech}\!\big[\tfrac{\alpha \sqrt{|k|}}{2}(s+C_2)\big]\)\normalsize; consequently \footnotesize\(H''(s)=-\tfrac{\alpha^2\sqrt{|k|}}{2}\operatorname{sech}\!\big(\tfrac{\alpha\sqrt{|k|}}{2}(s+C_2)\big)\tanh\!\big(\tfrac{\alpha\sqrt{|k|}}{2}(s+C_2)\big)\)\normalsize. Hence the first characteristic function for Case~1 (where \(\tfrac{2|c_0|}{|k|}-C_1=0\)) is \footnotesize\(h(s)=-\tfrac{H''(s)}{H'(s)}=\tfrac{\alpha\sqrt{|k|}}{2}\tanh\!\big[\frac{\alpha\sqrt{|k|}}{2}(s+C_2)\big]\)\normalsize. Again using Remark~\ref{remark: bypassing H} we obtain the minimal graph surface \footnotesize\(f(z,\bar z)=K(u)=j_2(u)-C_2=-\tfrac{2\epsilon_2}{\alpha \sqrt{|k|}}\operatorname{sech}^{-1}\!\Big(\tfrac{1}{\alpha}\sqrt{C_1 + \tfrac{2|c_0|}{|k|}\cos\!\big(\sqrt{|k|}[u-2\Re(c_2)]\big)}\Big)-C_2\)\normalsize. Substituting the expression for \(u\) gives: \footnotesize\(
    f(z,\bar z)=-\tfrac{2\epsilon_2}{\alpha \sqrt{|k|}}\operatorname{sech}^{-1}\big[\tfrac{1}{\alpha}\sqrt{C_1 + \tfrac{2|c_0|}{|k|}\cos\big(i\epsilon_1\ln\big[\tanh(\tfrac{\sqrt c_0(z+c_1)}{2})\coth(\tfrac{\sqrt{\bar c_0}(\bar z+\bar c_1)}{2})\big]\big)}\;\big]-C_2
  \)\normalsize. Now using the evenness of \(\cos\), the identity \(\cos(ix)=\cosh(x)\), and the condition that: \(\tfrac{2|c_0|}{|k|}-C_1=0\) (which implies \(\alpha=2\sqrt{\tfrac{|c_0|}{|k|}}\)), we obtain the following pair:
  \footnotesize
  \begin{equation}
  	\label{eqn: soln k<0, c_0 neq 0, f: case 1}
  	\boxed{f(z, \bar z)=\tfrac{\epsilon_2}{\sqrt{|c_0|}}\operatorname{sech}^{-1}\Big[\tfrac{1}{\sqrt{2}}\sqrt{1 + \cosh\big(\ln\big[\tanh(\tfrac{\sqrt c_0(z+c_1)}{2})\coth(\tfrac{\sqrt{\bar c_0}(\bar z+\bar c_1)}{2})\big]\big)}\;\Big]-C_2}
  \end{equation}
  \begin{equation}
     \label{eqn: soln k<0, c_0 neq 0, h: case 1}
     \boxed{h(s)=\sqrt{|c_0|}\tanh\big[\frac{\alpha\sqrt{|k|}}{2}(s+C_2)\big]}
  \end{equation}
  \normalsize
  \begin{figure}[htbp]
  	\centering
  	\begin{subfigure}{0.22\textwidth}
  		\centering
  		\includegraphics[width=\linewidth]{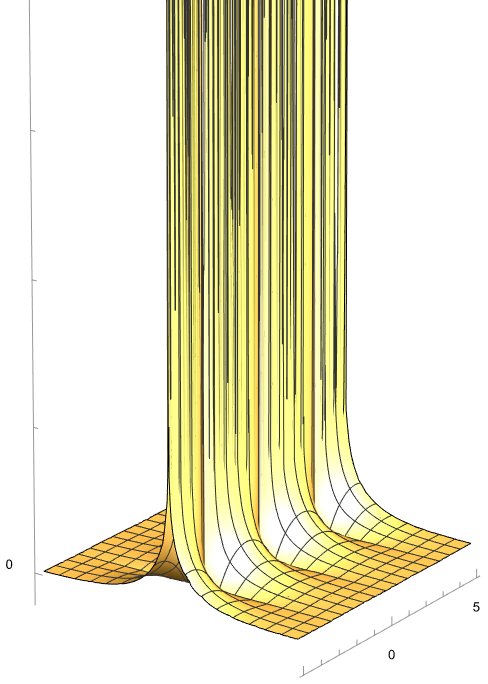}
  		\caption{\footnotesize Top view of one layer\normalsize}
  	\end{subfigure}
  	\hfill
  	\begin{subfigure}{0.22\textwidth}
  		\centering
  		\includegraphics[width=\linewidth]{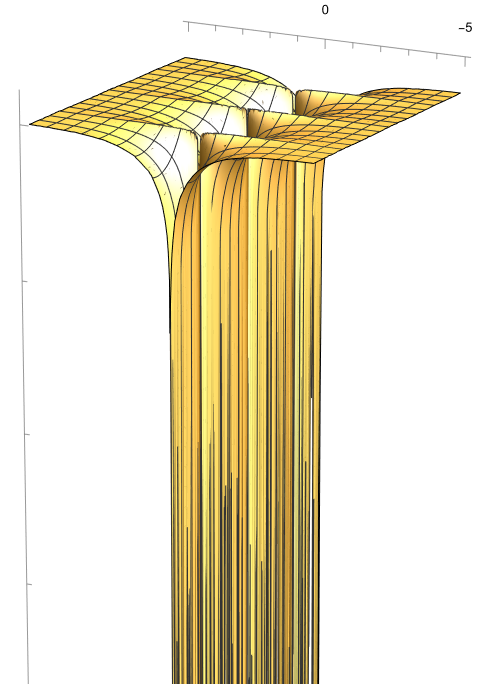}
  		\caption{\footnotesize Bottom view of one layer\normalsize}
  	\end{subfigure}
    \hfill
    \begin{subfigure}{0.22\textwidth}
    	\centering
    	\includegraphics[width=\linewidth]{Great_wall_implicit.png}
    	\caption{\footnotesize Implicit Surface Plot \normalsize}
    \end{subfigure}
  	\caption{\footnotesize The images above depict the minimal surface \( f(z,\bar z) \) given in equation \ref{eqn: soln k<0, c_0 neq 0, f: case 1}. They were generated with the parameter values	\( c_0=1 \), \( c_1=0 \), \( C_2=0 \) and \( \epsilon_2=1 \).\normalsize}
  	\label{pic: Great Wall}
  \end{figure}

  \textit{CASE 2}: We set \(\tfrac{2|c_0|}{|k|}-C_1=\beta^2>0\) and so \(j_2(s)=-\tfrac{2\epsilon_2}{\sqrt{|k|}}\int \tfrac{dx}{\sqrt{[\alpha^2-x^2][\beta^2+x^2]}}\). Note that this is an elliptic integral and we refer to \cite{La} Page 53 Equation 3.2.7 for this: \footnotesize\(j_2(s)=-\tfrac{2\epsilon_2}{\sqrt{|k|}\sqrt{\alpha^2+\beta^2}}\operatorname{sd}^{-1}\Big[\tfrac{\sqrt{\alpha^2+\beta^2}x}{\alpha\beta},\tfrac{\alpha}{\sqrt{\alpha^2+\beta^2}}\Big]\)\normalsize.  Recall the identity: $\operatorname{sd}^{-1}(x/k',k)=\operatorname{sn}^{-1}(1,k)-\operatorname{cn}^{-1}(x,k)$. Using this we simplify \(j_2(s)\) to the following: \footnotesize\(j_2(s)=\tfrac{2\epsilon_2}{\sqrt{|k|}\sqrt{\alpha^2+\beta^2}}\operatorname{sn}^{-1}\big(1,\tfrac{\alpha}{\sqrt{\alpha^2+\beta^2}}\big)+\tfrac{2\epsilon_2}{\sqrt{|k|}\sqrt{\alpha^2+\beta^2}}\operatorname{cn}^{-1}\Big[\tfrac{x}{\alpha},\tfrac{\alpha}{\sqrt{\alpha^2+\beta^2}}\Big]\)\normalsize. Let \(D\) denotes \footnotesize\(\tfrac{2\epsilon_2}{\sqrt{|k|}\sqrt{\alpha^2+\beta^2}}\operatorname{sn}^{-1}\big(1,\tfrac{\alpha}{\sqrt{\alpha^2+\beta^2}}\big).\;\)\normalsize Substituting \(x\), we get the following: \footnotesize\(j_2(s)=D\;+\;\tfrac{2\epsilon_2}{\sqrt{|k|}\sqrt{\alpha^2+\beta^2}}\operatorname{cn}^{-1}\Big[\tfrac{1}{\alpha}\sqrt{C_1 + \tfrac{2|c_0|}{|k|}\cos\big(\sqrt{|k|}[s-2\Re(c_2)]\big)},\tfrac{\alpha}{\sqrt{\alpha^2+\beta^2}}\Big]\)\normalsize. 
  Again, due to the difficulty in dealing with this complicated expression we will make use of the Remark: \ref{remark: bypassing H} to bypass the explicit construction of the diffeomorphism \(H\). Notice that \footnotesize\(j_2(s)=D+\tfrac{2\epsilon_2}{\sqrt{|k|}\sqrt{\alpha^2+\beta^2}}\operatorname{cn}^{-1}\Big[\tfrac{1}{\alpha}\sqrt{j_1(s)},\tfrac{\alpha}{\sqrt{\alpha^2+\beta^2}}\Big]\)\normalsize. So by Remark: \ref{remark: bypassing H}, we have: \footnotesize\(s+C_2=D+\tfrac{2\epsilon_2}{\sqrt{|k|}\sqrt{\alpha^2+\beta^2}}\operatorname{cn}^{-1}\Big[\tfrac{1}{\alpha}\sqrt{j_1(H(s))},\tfrac{\alpha}{\sqrt{\alpha^2+\beta^2}}\Big]=D+\tfrac{2\epsilon_2}{\sqrt{|k|}\sqrt{\alpha^2+\beta^2}}\operatorname{cn}^{-1}\Big[\tfrac{1}{\alpha}H'(s),\tfrac{\alpha}{\sqrt{\alpha^2+\beta^2}}\Big].\;\)\normalsize Absorb the constant \(D\) inside \(C_2\) and use the fact that \(\operatorname{cn}\) is an even function, we get an expression for \(H'(s)\): \footnotesize\(H'(s)=\alpha\operatorname{cn}\big[\tfrac{\sqrt{|k|}\sqrt{\alpha^2+\beta^2}}{2}(s+C_2),\tfrac{\alpha}{\sqrt{\alpha^2+\beta^2}}\big].\;\)\normalsize We compute \(H''(s)\), we refer to \cite{JD} Page: 249 Equation: 12.3.1.1.2 for the derivative of \(\operatorname{cn}\): \(\tfrac{d}{dt}\operatorname{cn}(s)=-\operatorname{sn}(s)\operatorname{dn}(s)\). So: \footnotesize\(H''(s)=-\alpha\tfrac{\sqrt{|k|}\sqrt{\alpha^2+\beta^2}}{2}\operatorname{sn}\big[\tfrac{\sqrt{|k|}\sqrt{\alpha^2+\beta^2}}{2}(s+C_2),\tfrac{\alpha}{\sqrt{\alpha^2+\beta^2}}\big]\operatorname{dn}\big[\tfrac{\sqrt{|k|}\sqrt{\alpha^2+\beta^2}}{2}(s+C_2),\tfrac{\alpha}{\sqrt{\alpha^2+\beta^2}}\big].\;\)\normalsize Hence we get that: \footnotesize\(h(s)=\tfrac{-H''(s)}{H'(s)}=\tfrac{\sqrt{|k|}\sqrt{\alpha^2+\beta^2}}{2}\operatorname{sc}\big[\tfrac{\sqrt{|k|}\sqrt{\alpha^2+\beta^2}}{2}(s+C_2),\tfrac{\alpha}{\sqrt{\alpha^2+\beta^2}}\big]\operatorname{dn}\big[\tfrac{\sqrt{|k|}\sqrt{\alpha^2+\beta^2}}{2}(s+C_2),\tfrac{\alpha}{\sqrt{\alpha^2+\beta^2}}\big].\;\)\normalsize Now see that: \footnotesize\(\sqrt{\alpha^2+\beta^2}=2\sqrt{\tfrac{|c_0|}{|k|}}\;\)\normalsize and consequently, \footnotesize\(\tfrac{\sqrt{|k|}\sqrt{\alpha^2+\beta^2}}{2}=\sqrt{|c_0|},\;\)\normalsize Hence: \footnotesize\(h(s)=\sqrt{|c_0|}\operatorname{sc}\big[\sqrt{|c_0|}(s+C_2),\tfrac{\alpha}{\sqrt{\alpha^2+\beta^2}}\big]\operatorname{dn}\big[\sqrt{|c_0|}(s+C_2),\tfrac{\alpha}{\sqrt{\alpha^2+\beta^2}}\;\big].\;\)\normalsize We see from Remark \ref{remark: bypassing H} that \footnotesize\(K(u)=j_2(u)-C_2\)\normalsize, using this and the fact that \(f(z,\bar z)=K(u)\) we find the minimal graph surface \footnotesize\(f(z,\bar z)=\tfrac{2\epsilon_2}{\sqrt{|k|}\sqrt{\alpha^2+\beta^2}}\operatorname{cn}^{-1}\Big[\tfrac{1}{\alpha}\sqrt{j_1(u)},\tfrac{\alpha}{\sqrt{\alpha^2+\beta^2}}\Big]-C_2\)\normalsize. Now substituting the expression for \(u\) and applying properties of \(\cos\) we get:
  
  \footnotesize\(
  f(z,\bar z)=\tfrac{\pm1}{\sqrt{|c_0|}}\operatorname{cn}^{-1}\Big[\tfrac{1}{\alpha}\sqrt{C_1 + \tfrac{2|c_0|}{|k|}\cosh\big(\ln\big[\tanh(\tfrac{\sqrt c_0(z+c_1)}{2})\coth(\tfrac{\sqrt{\bar c_0}(\bar z+\bar c_1)}{2})\big]\big)},\tfrac{\alpha}{\sqrt{\alpha^2+\beta^2}}\Big]-C_2
  \)\normalsize
  
  \noindent
  Now $\operatorname{cn}^{-1}(x,k)=\operatorname{dn}^{-1}[\sqrt{k'^2+k^2x^2},k]$  (see \cite{BF} Page: 31, Equation 131.01), using this identity and then calling: \(\gamma=\tfrac{\alpha}{\sqrt{\alpha^2+\beta^2}}\) we obtain the following expression for \(f(z, \bar z)\): 
  
  \footnotesize\(
  f(z,\bar z)=\tfrac{\pm1}{\sqrt{|c_0|}}\operatorname{dn}^{-1}\Big[\tfrac{1}{\sqrt{2}}\sqrt{1 + \cosh\big(\ln\big[\tanh(\tfrac{\sqrt c_0(z+c_1)}{2})\coth(\tfrac{\sqrt{\bar c_0}(\bar z+\bar c_1)}{2})\big]\big)},\gamma\Big]-C_2
  \)\normalsize
  
  \noindent  
  Hence we have the following pair:
  \footnotesize
  \begin{equation}
  	\label{eqn: soln k<0, c_0 neq 0, f: case 2}
  	\boxed{f(z,\bar z)=\tfrac{\epsilon_2}{\sqrt{|c_0|}}\operatorname{dn}^{-1}\Big[\tfrac{1}{\sqrt{2}}\sqrt{1 + \cosh\big(\ln\big[\tanh(\tfrac{\sqrt c_0(z+c_1)}{2})\coth(\tfrac{\sqrt{\bar c_0}(\bar z+\bar c_1)}{2})\big]\big)},\gamma\Big]-C_2}
  \end{equation}
  \begin{equation}
  	\label{eqn: soln k<0, c_0 neq 0, h: case 2}
  	\boxed{h(s)=\sqrt{|c_0|}\operatorname{sc}\Big[\sqrt{|c_0|}(s+C_2),\gamma\Big]\operatorname{dn}\Big[\sqrt{|c_0|}(s+C_2),\gamma\;\Big].\;}
  \end{equation}
  \normalsize
  \begin{figure}[htbp]
  	\centering
  	\begin{subfigure}{0.30\textwidth}
  		\centering
  		\includegraphics[width=\linewidth]{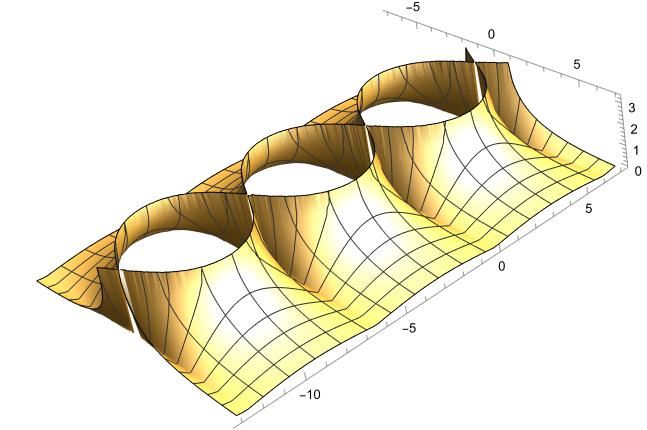}
  		\caption{\footnotesize Top view of one layer \normalsize}
  	\end{subfigure}
  	\hfill
  	\begin{subfigure}{0.27\textwidth}
  		\centering
  		\includegraphics[width=\linewidth]{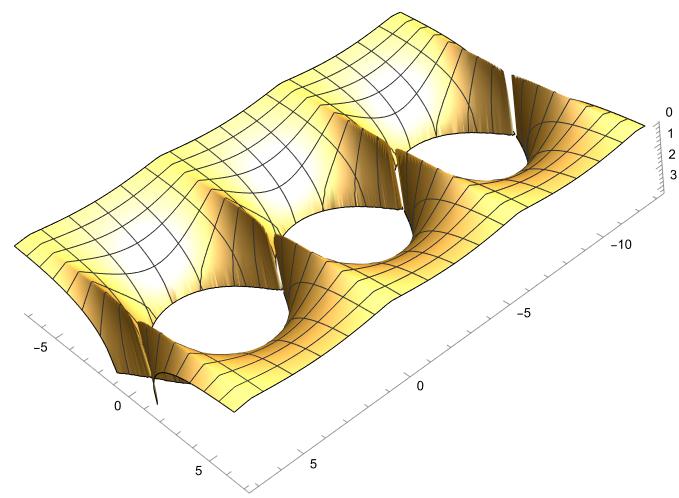}
  		\caption{\footnotesize Bottom view of one layer \normalsize}
  	\end{subfigure}
  	\hfill
  	\begin{subfigure}{0.26\textwidth}
  		\centering
  		\includegraphics[width=\linewidth]{Thick_wall_implicit.jpeg}
  		\caption{\footnotesize Implicit Surface Plot \normalsize}
  	\end{subfigure}
  	\caption{\footnotesize The images above depict the minimal surface \( f(z,\bar z) \) given in equation \ref{eqn: soln k<0, c_0 neq 0, f: case 2}. They were generated using with the parameter values	\( c_0=0.25 \), \( c_1=0 \), \( C_2=0 \), \(\gamma=0.2\) and \( \epsilon_2=1 \).\normalsize}
  	\label{pic: Thick Wall}
  \end{figure}

  \textit{CASE 3}: We suppose \(\tfrac{2|c_0|}{|k|}-C_1=-\beta^2<0\), so that \(j_2(s)=-\tfrac{2\epsilon_2}{\sqrt{|k|}}\int \tfrac{dx}{\sqrt{[\alpha^2-x^2][x^2-\beta^2]}}\). This is an elliptic integral; referring to \cite{La} Page: 53, Equation: 3.2.10 gives \(j_2(s)=-\tfrac{2\epsilon_2}{\alpha\sqrt{|k|}}\operatorname{nd}^{-1}\!\big[\tfrac{x}{\beta},\frac{\sqrt{\alpha^2-\beta^2}}{\alpha}\big]\). Using the identity \(\operatorname{nd}^{-1}(x,k)=\operatorname{sn}^{-1}(1,k)-\operatorname{dn}^{-1}(xk',k)\) (see \cite{La} Page: 54, Equation: 3.2.21 and setting \(D=-\tfrac{2\epsilon_2}{\sqrt{|k|}}\operatorname{sn}^{-1}(1,k)\), we obtain \(j_2(s)=D+\tfrac{2\epsilon_2}{\alpha\sqrt{|k|}}\operatorname{dn}^{-1}\!\big[\tfrac{x}{\alpha},\frac{\sqrt{\alpha^2-\beta^2}}{\alpha}\big]\). Substituting the expression for \(x\) yields the following: \footnotesize\(j_2(s)=D+\tfrac{2\epsilon_2}{\alpha\sqrt{|k|}}\operatorname{dn}^{-1}\!\big[\tfrac{1}{\alpha}\sqrt{C_1 + \tfrac{2|c_0|}{|k|}\cos\!\big(\sqrt{|k|}[s-2\Re(c_2)]\big)},\frac{\sqrt{\alpha^2-\beta^2}}{\alpha}\big]\)\normalsize. To find the first characteristic function, use Remark~\ref{remark: bypassing H} and we have \footnotesize\(j_2(s)=D+\tfrac{2\epsilon_2}{\alpha\sqrt{|k|}}\operatorname{dn}^{-1}\!\big[\tfrac{1}{\alpha}\sqrt{j_1(s)},\frac{\sqrt{\alpha^2-\beta^2}}{\alpha}\big]\)\normalsize, therefore \footnotesize\(s+C_2=j_2(H(s))=D+\tfrac{2\epsilon_2}{\alpha\sqrt{|k|}}\operatorname{dn}^{-1}\!\big[\tfrac{1}{\alpha}\sqrt{j_1(H(s))},\frac{\sqrt{\alpha^2-\beta^2}}{\alpha}\big]=D+\tfrac{2\epsilon_2}{\alpha\sqrt{|k|}}\operatorname{dn}^{-1}\!\big[\tfrac{H'(s)}{\alpha},\frac{\sqrt{\alpha^2-\beta^2}}{\alpha}\big]\)\normalsize. Absorbing the constant \(D\) into \(C_2\) and inverting, then using the evenness of \(\operatorname{dn}\), gives the explicit expression \footnotesize\(H'(s)=\alpha\operatorname{dn}\!\big[\tfrac{\alpha\sqrt{|k|}}{2}(s+C_2), \frac{\sqrt{\alpha^2-\beta^2}}{\alpha}\big]\)\normalsize. From \cite{JD} Page: 249, Equation: 12.3.1.1.3 we have the derivative:  \(\tfrac{d}{dt}\operatorname{dn}(s,k)=-k^2\operatorname{sn}(s,k)\operatorname{cn}(s,k)\); and thus we have \footnotesize\(H''(s)=-\tfrac{(\alpha^2-\beta^2)\sqrt{|k|}}{2}\operatorname{sn}\!\big[\tfrac{\alpha\sqrt{|k|}}{2}(s+C_2), \frac{\sqrt{\alpha^2-\beta^2}}{\alpha}\big]\operatorname{cn}\!\big[\tfrac{\alpha\sqrt{|k|}}{2}(s+C_2), \frac{\sqrt{\alpha^2-\beta^2}}{\alpha}\big]\)\normalsize. Consequently, \footnotesize\(h(s)=\tfrac{-H''(s)}{H'(s)}=\tfrac{[\alpha^2-\beta^2]\sqrt{|k|}}{2\alpha}\operatorname{sd}\!\big[\tfrac{\alpha\sqrt{|k|}}{2}(s+C_2), \frac{\sqrt{\alpha^2-\beta^2}}{\alpha}\big]\operatorname{cn}\!\big[\tfrac{\alpha\sqrt{|k|}}{2}(s+C_2), \frac{\sqrt{\alpha^2-\beta^2}}{\alpha}\big]\)\normalsize. Substituting back \(\alpha,\beta\) yields \footnotesize\(h(s)=\tfrac{2|c_0|}{\alpha\sqrt{|k|}}\operatorname{sd}\!\big[\tfrac{\alpha\sqrt{|k|}}{2}(s+C_2), \tfrac{2\sqrt{|c_0|}}{\alpha\sqrt{|k|}}\big]\operatorname{cn}\!\big[\tfrac{\alpha\sqrt{|k|}}{2}(s+C_2),\tfrac{2\sqrt{|c_0|}}{\alpha\sqrt{|k|}}\big]\)\normalsize. To simplify \(j_2\) we use the identity \footnotesize\(\operatorname{dn}^{-1}(x,k)=\operatorname{cn}^{-1}\!\big[\sqrt{\tfrac{x^2-k'^2}{k^2}},k\big]\)\normalsize, giving \footnotesize\(j_2(s)=D+\tfrac{2\epsilon_2}{\alpha\sqrt{|k|}}\operatorname{cn}^{-1}\!\big[\tfrac{1}{\sqrt{2}}\sqrt{1 + \cos\!\big(\sqrt{|k|}[s-2\Re(c_2)]\big)},\tfrac{2\sqrt{|c_0|}}{\alpha\sqrt{|k|}}\big]\)\normalsize. By Remark~\ref{remark: bypassing H} we have \(K(s)=j_2(s)-C_2\) (with \(D\) absorbed into \(C_2\)), and since \(f(z,\bar z)=K(u)\) we obtain:
  
  \footnotesize\(
      f(z,\bar z)=\tfrac{2\epsilon_2}{\alpha\sqrt{|k|}}\operatorname{cn}^{-1}\Big[\tfrac{1}{\sqrt{2}}\sqrt{1 + \cosh\big(\ln\big[\tanh(\tfrac{\sqrt c_0(z+c_1)}{2})\coth(\tfrac{\sqrt{\bar c_0}(\bar z+\bar c_1)}{2})\big]\big)},\tfrac{2\sqrt{|c_0|}}{\alpha\sqrt{|k|}}\Big]-C_2
  \)\normalsize
  
  \noindent
  Notice that \footnotesize\(\tfrac{2\sqrt{|c_0|}}{\alpha\sqrt{|k|}}\)\normalsize can attain any positive value between 0 and 1, hence call it \(\gamma\) and hence we have the following pair:
  \footnotesize
  \begin{equation}
  	\label{eqn: soln k<0, c_0 neq 0, f: case 3}
  	\boxed{f(z,\bar z)=\epsilon_2\tfrac{\gamma}{\sqrt{|c_0|}}\operatorname{cn}^{-1}\Big[\tfrac{1}{\sqrt{2}}\sqrt{1 + \cosh\big(\ln\big[\tanh(\tfrac{\sqrt c_0(z+c_1)}{2})\coth(\tfrac{\sqrt{\bar c_0}(\bar z+\bar c_1)}{2})\big]\big)},\gamma\Big]-C_2}
  \end{equation}
  \begin{equation}
  	\label{eqn: soln k<0, c_0 neq 0, h: case 3}
  	\boxed{h(s)=\gamma\sqrt{|c_0|}\operatorname{sd}\big[\tfrac{\sqrt{|c_0|}}{\gamma}(s+C_2),\gamma\big]\operatorname{cn}\big[\tfrac{\sqrt{|c_0|}}{\gamma}(s+C_2),\gamma\big]}
  \end{equation}
  \normalsize
  \begin{figure}[htbp]
  	\centering
  	\begin{subfigure}{0.32\textwidth}
  		\centering
  		\includegraphics[width=\linewidth]{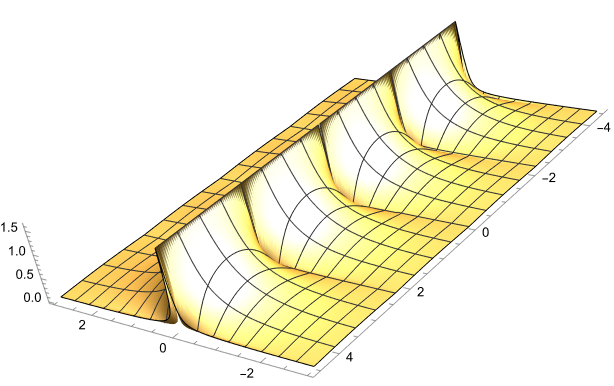}
  		\caption{\footnotesize Top view of one layer \normalsize}
  	\end{subfigure}
  	\hfill
  	\begin{subfigure}{0.30\textwidth}
  		\centering
  		\includegraphics[width=\linewidth]{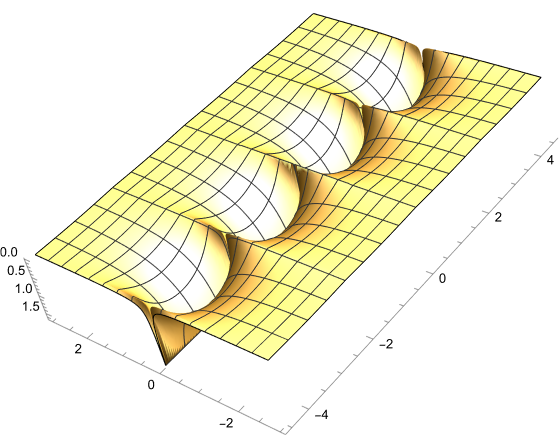}
  		\caption{\footnotesize Bottom view of one layer \normalsize}
  	\end{subfigure}
  	\hfill
  	\begin{subfigure}{0.28\textwidth}
  		\centering
  		\includegraphics[width=\linewidth]{Sharp_wall_implicit.png}
  		\caption{\footnotesize Implicit Surface Plot \normalsize}
  	\end{subfigure}
  	\caption{\footnotesize The images above depict the minimal surface \( f(z,\bar z) \) given in equation \ref{eqn: soln k<0, c_0 neq 0, f: case 3}. They were generated using with the parameter values	\( c_0=2 \), \( c_1=0 \), \( C_2=0 \), \(\gamma=0.8\) and \( \epsilon_2=1 \).\normalsize}
  	\label{pic: Sharp Wall}
  \end{figure}

  \textbf{Finding Nontrivial Minimal Graph Transformations: } From Remark~\ref{remark: bypassing int h} we have \footnotesize\(g = \pm \displaystyle \int \tfrac{1}{\sqrt{1\; \pm\; \left(\frac{C}{H'(s)}\right)^2}}\, dt + C_3=\pm \displaystyle \int \tfrac{H'(s)}{\sqrt{[H'(s)]^2\; \pm\; C^2}}\, dt + C_3\)\normalsize. We now apply this formula to each of the three cases:\\

  \textit{CASE 1:} Recall that \(\operatorname{sech}\) is an even function, so \footnotesize\(H'(s)=\alpha\operatorname{sech}\!\big[\tfrac{\alpha \sqrt{|k|}}{2}(s+C_2)\big]\)\normalsize; substituting \footnotesize\(\alpha=2\tfrac{\sqrt{|c_0|}}{\sqrt{|k|}}\)\normalsize gives \footnotesize\(H'(s)=\alpha\operatorname{sech}\!\big[\sqrt{|c_0|}(s+C_2)\big]\)\normalsize. Using this, we get NMG Transformation \(g\): \footnotesize\(g =\pm \int \tfrac{\alpha\operatorname{sech}\!\big[\sqrt{|c_0|}(s+C_2)\big]}{\sqrt{\alpha^2\operatorname{sech}^2\!\big[\sqrt{|c_0|}(s+C_2)\big]\; \pm\; C^2}}\, dt + C_3\)\normalsize. Multiply the factor \footnotesize\(-\sqrt{|c_0|}\operatorname{tanh}\!\big[\sqrt{|c_0|}(s+C_2)\;\)\normalsize to the numerator and denominator of the integrand: \footnotesize\(\tfrac{\pm 1}{\sqrt{|c_0|}} \int \tfrac{-\sqrt{|c_0|}\operatorname{sech}[\sqrt{|c_0|}(s+C_2)]\operatorname{tanh}[\sqrt{|c_0|}(s+C_2)]}{\operatorname{tanh}[\sqrt{|c_0|}(s+C_2)]\sqrt{\operatorname{sech}^2[\sqrt{|c_0|}(s+C_2)]\; \pm\; \frac{C^2}{\alpha^2}}}\, dt + C_3\)\normalsize. Further we obtain : \footnotesize\(g=\tfrac{\pm 1}{\sqrt{|c_0|}} \int \tfrac{-\sqrt{|c_0|}\operatorname{sech}[\sqrt{|c_0|}(s+C_2)]\operatorname{tanh}[\sqrt{|c_0|}(s+C_2)]}{\sqrt{\big[1-\operatorname{sech}^2[\sqrt{|c_0|}(s+C_2)]\big]\big[\operatorname{sech}^2[\sqrt{|c_0|}(s+C_2)]\; \pm\; \tfrac{C^2}{\alpha^2}\big]}}\, dt + C_3\;\)\normalsize using the identity \footnotesize\(\tanh(x)=\sqrt{1-\operatorname{sech}^2(x)}\;\)\normalsize. Setting \footnotesize\(y=\operatorname{sech}[\sqrt{|c_0|}(s+C_2)]\,\)\normalsize gives: \footnotesize\(g=\tfrac{\pm 1}{\sqrt{|c_0|}} \int \tfrac{dy}{\sqrt{[1-y^2][y^2 \pm \frac{C^2}{\alpha^2}]}} + C_3\)\normalsize. Let \(C_4=\tfrac{|C|}{\alpha}>0\); since this can attain any positive value,  we treat \(C_4\) as an arbitrary positive constant yeilding the following 2 integrals:
  
  \footnotesize\(
  \big(I_1\big):\;\;\; g=\tfrac{\pm 1}{\sqrt{|c_0|}} \displaystyle \int \tfrac{dy}{\sqrt{[1-y^2][y^2 + C_4^2]}} + C_3\qquad\qquad \big(I_2\big):\;\;\; g=\tfrac{\pm 1}{\sqrt{|c_0|}} \displaystyle \int \tfrac{dy}{\sqrt{[1-y^2][y^2 - C_4^2]}} + C_3
  \)\normalsize

  \noindent
  We refer \cite{La} page 53, Equation: 3.2.7 and Equation: 3.2.10 for exact formulae of these integrals. 
  
   \textit{Calculating \(I_1\): } We obtain \(I_1\) as \footnotesize\(g=\tfrac{\pm 1}{\sqrt{1+C_4^2}\sqrt{|c_0|}}\operatorname{sd}^{-1}\big[\tfrac{y\sqrt{1+C_4^2}}{C_4},\tfrac{1}{\sqrt{1+C_4^2}}\big]+C_3\)\normalsize. Using the identity \(\operatorname{sd}^{-1}\!\big([x/k'],k\big)+\operatorname{cn}^{-1}(x,k)=\operatorname{sn}^{-1}(1,k)\) and absorbing \(\tfrac{\pm 1}{\sqrt{1+C_4^2}\sqrt{|c_0|}}\operatorname{sn}^{-1}(1,k)\) into \(C_3\), we rewrite \(g\) as \footnotesize\(g=\tfrac{\pm 1}{\sqrt{1+C_4^2}\sqrt{|c_0|}}\operatorname{cn}^{-1}\big[y,\tfrac{1}{\sqrt{1+C_4^2}}\big]+C_3\)\normalsize. Substituting back \(y\) gives:
  
  \begin{equation}\label{eqn: soln k<0, c_0 neq 0, g: case 1.1}
  	\boxed{I_1:\qquad g=\tfrac{\pm 1}{\sqrt{1+C_4^2}\sqrt{|c_0|}}\operatorname{cn}^{-1}\big[\operatorname{sech}\!\big[\sqrt{|c_0|}(s+C_2)\big],\tfrac{1}{\sqrt{1+C_4^2}}\big]+C_3}
  \end{equation}

  \textit{Calculating \(I_2\): } We find that \(I_2\) is: \footnotesize\(g=\tfrac{\pm 1}{\sqrt{|c_0|}}\operatorname{nd}^{-1}\big[\tfrac{y}{C_4},\sqrt{1-C_4^2}\big]+C_3\)\normalsize. Using the identity: \(\operatorname{dn}^{-1}\!\big(xk',k\big)+\operatorname{nd}^{-1}(x,k)=\operatorname{sn}^{-1}(1,k)\) and then absorbing \(\tfrac{\pm 1}{\sqrt{|c_0|}}\operatorname{sn}^{-1}(1,k)\) into \(C_3\). We obtain \footnotesize\(g=\tfrac{\pm 1}{\sqrt{|c_0|}}\operatorname{dn}^{-1}\big[y,\sqrt{1-C_4^2}\big]+C_3\)\normalsize. Substituting back \(y\) yeilds:
  
  \begin{equation}\label{eqn: soln k<0, c_0 neq 0, g: case 1.2}
	\boxed{I_2:\qquad g=\tfrac{\pm 1}{\sqrt{|c_0|}}\operatorname{dn}^{-1}\Big[\operatorname{sech}\!\big[\sqrt{|c_0|}(s+C_2)\big],\sqrt{1-C_4^2}\;\Big]+C_3}
 \end{equation}
  
  \noindent
  The conditions required for the integrals to be real‑valued are: for \(I_1\) we need \(C_4>0\), and for \(I_2\) we require \(0<C_4<1\).\\

  \textit{CASE 2:} We have \footnotesize\(H'(s)=\alpha\operatorname{cn}\big[\sqrt{|c_0|}(s+C_2),\gamma\big]\)\normalsize. Substitute this into \(g\) gives: \footnotesize\(g =\pm \displaystyle \int \tfrac{\alpha\operatorname{cn}\big[\sqrt{|c_0|}(s+C_2),\gamma\big]}{\sqrt{\alpha^2\operatorname{cn}^2\big[\sqrt{|c_0|}(s+C_2),\gamma\big]\; \pm\; C^2}}\, dt + C_3\)\normalsize. Using the identity \footnotesize\(\operatorname{cn}(x,k)=\sqrt{1-\operatorname{sn}^2(x,k)}\;\)\normalsize (see \cite{JD} Page: 248 Equation: 12.2.2.2.1) to get: \footnotesize\(g =\pm \displaystyle \int \tfrac{\operatorname{cn}\big[\sqrt{|c_0|}(s+C_2),\gamma\big]}{\sqrt{\frac{\alpha^2 \pm C^2}{\alpha^2}-\operatorname{sn}^2\big[\sqrt{|c_0|}(s+C_2),\gamma\big]}}\, dt + C_3\)\normalsize. Set \footnotesize\(y=\operatorname{sn}\big[\sqrt{|c_0|}(s+C_2),\gamma\big]\)\normalsize; for the integral to be real valued we require \footnotesize\(\frac{\alpha^2 \pm C^2}{\alpha^2}>0\)\normalsize, which we denote by \(C_4^2\).  Using:  \footnotesize\(\operatorname{dn}(x,k)=\sqrt{1-k^2\operatorname{sn}^2(x,k)}\)\normalsize, we find: \footnotesize\(g=\tfrac{\pm 1}{\sqrt{|c_0|}} \displaystyle \int \tfrac{dy}{\sqrt{[1-\gamma^2y^2][C_4^2-y^2]}} + C_3=\tfrac{\pm 1}{\gamma\sqrt{|c_0|}} \displaystyle \int \tfrac{dy}{\sqrt{\big[\tfrac{1}{\gamma^2}-y^2\big][C_4^2-y^2]}} + C_3\)\normalsize. This lead to folloing three cases:
  
  \(I_1: (C_4^2=\tfrac{1}{\gamma^2}) \implies g=\tfrac{\pm 1}{\gamma\sqrt{|c_0|}} \displaystyle \int \tfrac{dy}{\big[\frac{1}{\gamma^2}-y^2\big]} + C_3\)
  
  \(I_2: (C_4^2>\tfrac{1}{\gamma^2}) \implies g=\tfrac{\pm 1}{\gamma\sqrt{|c_0|}} \displaystyle \int \tfrac{dy}{\sqrt{\big[\tfrac{1}{\gamma^2}-y^2\big][C_4^2-y^2]}} + C_3\)
  
  \(I_3: (C_4^2<\tfrac{1}{\gamma^2}) \implies g=\tfrac{\pm 1}{\gamma\sqrt{|c_0|}} \displaystyle \int \tfrac{dy}{\sqrt{\big[\tfrac{1}{\gamma^2}-y^2\big][C_4^2-y^2]}} + C_3\)

 \textit{Calculating \(I_1\): } First note that by definition of \(\gamma\) and \(y\) we have \(\tfrac{1}{\gamma^2}>1\) and \(y^2<1<\tfrac{1}{\gamma^2}\); under this condition we refer to \cite{JD} Page: 152 Equation: 3.5.1.7 for finding the antiderivative, obtaining \(I_1\) as: \footnotesize\(g=\tfrac{\pm 1}{\sqrt{|c_0|}}\tanh^{-1}\big[y\gamma\big]+C_3\)\normalsize. Using the identity \(\tanh^{-1}(x)=\operatorname{sech}^{-1}(\sqrt{1-x^2})\) together with \footnotesize\(y=\operatorname{sn}\big[\sqrt{|c_0|}(s+C_2),\gamma\big]\;\)\normalsize gives the following: \footnotesize\(g=\tfrac{\pm 1}{\sqrt{|c_0|}}\operatorname{sech}^{-1}\big[\sqrt{1-\gamma^2\operatorname{sn}^2\big[\sqrt{|c_0|}(s+C_2),\gamma\big]}\big]+C_3\)\normalsize. Applying the identity \footnotesize\(\operatorname{dn}(x,k)=\sqrt{1-k^2\operatorname{sn}^2(x,k)}\;\)\normalsize once more to get:
 
 \begin{equation}\label{eqn: soln k<0, c_0 neq 0, g: case 2.1}
 	\boxed{I_1:\qquad g=\tfrac{\pm 1}{\sqrt{|c_0|}}\operatorname{sech}^{-1}\big[\operatorname{dn}\big[\sqrt{|c_0|}(s+C_2),\gamma\big]\big]+C_3}
 \end{equation}
 
 \textit{Calculating \(I_2\): } We refer to \cite{La} page 53, Equation: 3.2.6 for this integral, obtaining \(I_2\) as \footnotesize\(g=\tfrac{\pm 1}{\gamma C_4\sqrt{|c_0|}} \operatorname{cd}^{-1}\big[y\gamma,\tfrac{1}{\gamma C_4}\big] + C_3\)\normalsize. Using the identity: \(\operatorname{cd}^{-1}\!\big(x,k\big)+\operatorname{sn}^{-1}(x,k)=\operatorname{sn}^{-1}(1,k)\) (see \cite{La} Page: 54 Equation: 3.2.16) and absorbing \footnotesize\(\tfrac{\pm 1}{\gamma C_4\sqrt{|c_0|}}\operatorname{sn}^{-1}(1,\tfrac{1}{\gamma C_4})\;\)\normalsize into \(C_3\), we obtain: \footnotesize\(g=\tfrac{\pm 1}{\gamma C_4\sqrt{|c_0|}} \operatorname{sn}^{-1}\big[y\gamma,\tfrac{1}{\gamma C_4}\big] + C_3\)\normalsize. Applying the identity \footnotesize\(\operatorname{sn}^{-1}\big[x,k\big]=\operatorname{cn}^{-1}\big[\sqrt{1-x^2},k\big]\)\normalsize, (see \cite{BF} Page: 31 Equation: 131.01) gives \footnotesize\(g=\tfrac{\pm 1}{\gamma C_4\sqrt{|c_0|}} \operatorname{cn}^{-1}\big[1-y^2\gamma^2,\tfrac{1}{\gamma C_4}\big] + C_3\)\normalsize. Substituting back \(y\) and using \footnotesize\(\operatorname{dn}(x,k)=\sqrt{1-k^2\operatorname{sn}^2(x,k)}\;\)\normalsize yeilds:
 
 \begin{equation}\label{eqn: soln k<0, c_0 neq 0, g: case 2.2}
 	\boxed{I_2:\qquad g=\tfrac{\pm 1}{\gamma C_4\sqrt{|c_0|}} \operatorname{cn}^{-1}\big[\operatorname{dn}\big[\sqrt{|c_0|}(s+C_2),\gamma\big],\tfrac{1}{\gamma C_4}\big] + C_3}
 \end{equation}

 \textit{Calculating \(I_3\): } We refer to \cite{La} page 53, Equation: 3.2.6 for this integral, obtaining \(I_2\) as: \footnotesize\(g=\tfrac{\pm 1}{\sqrt{|c_0|}} \operatorname{cd}^{-1}\big[\tfrac{y}{C_4},C_4 \gamma\big] + C_3\)\normalsize. Using the identity \(\operatorname{cd}^{-1}\!\big(x,k\big)+\operatorname{sn}^{-1}(x,k)=\operatorname{sn}^{-1}(1,k)\) (see \cite{La} Page: 54 Equation: 3.2.16) and absorbing the constant \footnotesize\(\tfrac{\pm 1}{\sqrt{|c_0|}}\operatorname{sn}^{-1}(1,C_4 \gamma)\;\)\normalsize into \(C_3\), we obtain \footnotesize\(g=\tfrac{\pm 1}{\sqrt{|c_0|}} \operatorname{sn}^{-1}\big[\tfrac{y}{C_4},C_4 \gamma\big] + C_3\)\normalsize. Applying the identity \footnotesize\(\operatorname{sn}^{-1}\big[x,k\big]=\operatorname{dn}^{-1}\big[\sqrt{1-k^2x^2},k\big]\)\normalsize, (see \cite{BF} Page: 31 Equation: 131.01) gives \footnotesize\(g=\tfrac{\pm 1}{\sqrt{|c_0|}} \operatorname{dn}^{-1}\big[\sqrt{1-\gamma^2 y^2},C_4 \gamma\big] + C_3\)\normalsize. Substituting back \(y\) and using \footnotesize\(\operatorname{dn}(x,k)=\sqrt{1-k^2\operatorname{sn}^2(x,k)}\)\normalsize, we get:
 
 \begin{equation}\label{eqn: soln k<0, c_0 neq 0, g: case 2.3}
 	\boxed{I_3:\qquad g=\tfrac{\pm 1}{\sqrt{|c_0|}} \operatorname{dn}^{-1}\big[\operatorname{dn}\big[\sqrt{|c_0|}(s+C_2),\gamma\big],C_4 \gamma\big] + C_3}
 \end{equation}

 \textit{CASE 3:} We have \footnotesize\(H'(s)=\alpha\operatorname{dn}\!\big[\frac{\alpha\sqrt{|k|}}{2}(s+C_2), \frac{\sqrt{\alpha^2-\beta^2}}{\alpha}\big]\)\normalsize. Substituting this into \(g\) gives \footnotesize\(g =\pm \displaystyle \int \tfrac{\alpha\operatorname{dn}\!\big[\frac{\alpha\sqrt{|k|}}{2}(s+C_2), \frac{\sqrt{\alpha^2-\beta^2}}{\alpha}\big]}{\sqrt{\alpha^2\operatorname{dn}^2\!\big[\frac{\alpha\sqrt{|k|}}{2}(s+C_2), \frac{\sqrt{\alpha^2-\beta^2}}{\alpha}\big]\; \pm\; C^2}}\, dt + C_3\)\normalsize. Using \footnotesize\(\operatorname{dn}(x,k)=1-k^2\operatorname{sn}^2(x,k)\;\)\normalsize and setting \footnotesize\(\gamma=\frac{\sqrt{\alpha^2-\beta^2}}{\alpha}\)\normalsize, we have \footnotesize\(\frac{\alpha\sqrt{|k|}}{2}=\tfrac{\sqrt{|c_0|}}{\gamma}\)\normalsize; thus \footnotesize\(g =\tfrac{\pm 1}{\gamma}\displaystyle \int \tfrac{\operatorname{dn}\!\big[\frac{\sqrt{|c_0|}}{\gamma}(s+C_2), \gamma\big]}{\sqrt{\frac{\alpha^2 \pm C^2}{\gamma^2\alpha^2}-\operatorname{sn}^2\big[\frac{\sqrt{|c_0|}}{\gamma}(s+C_2), \gamma\big]}}\, dt + C_3\)\normalsize. Fix \footnotesize\(y=\operatorname{sn}\big[\frac{\sqrt{|c_0|}}{\gamma}(s+C_2), \gamma\big]\)\normalsize, for this integral to be real valued on some domain we require \footnotesize\(\frac{\alpha^2 \pm C^2}{\gamma^2\alpha^2}>0\)\normalsize, which we denote by \(C_4^2\).  Using \footnotesize\(\operatorname{cn}(x,k)=\sqrt{1-\operatorname{sn}^2(x,k)}\)\normalsize, we obtain: \footnotesize\(g=\tfrac{\pm 1}{\sqrt{|c_0|}} \displaystyle \int \tfrac{dy}{\sqrt{[1-y^2][C_4^2-y^2]}} + C_3\)\normalsize. This lead to folloing three cases:
 
 \(I_1: (C_4^2=1) \implies g=\frac{\pm 1}{\sqrt{|c_0|}} \displaystyle \int \tfrac{dy}{1-y^2} + C_3\)
 
 \(I_2: (C_4^2>1) \implies g=\tfrac{\pm 1}{\sqrt{|c_0|}} \displaystyle \int \tfrac{dy}{\sqrt{[1-y^2][C_4^2-y^2]}} + C_3\)
 
 \(I_3: (C_4^2<1) \implies g=\tfrac{\pm 1}{\sqrt{|c_0|}} \displaystyle \int \tfrac{dy}{\sqrt{[1-y^2][C_4^2-y^2]}} + C_3\)

 \textit{Calculating \(I_1\): } Analogous to \(I_1\) of case 2, we refer to \cite{JD} Page: 152 Equation: 3.5.1.7 for finding this antiderivative, which yeilds \(I_1\) as: \footnotesize\(g=\tfrac{\pm 1}{\sqrt{|c_0|}}\tanh^{-1}\big[y\big]+C_3\)\normalsize. With the identity \(\tanh^{-1}(x)=\operatorname{sech}^{-1}(\sqrt{1-x^2})\) and the substitution \footnotesize\(y=\operatorname{sn}\big[\frac{\sqrt{|c_0|}}{\gamma}(s+C_2),\gamma\big]\)\normalsize, we obtain \footnotesize\(g=\tfrac{\pm 1}{\sqrt{|c_0|}}\operatorname{sech}^{-1}\big[\sqrt{1-\operatorname{sn}^2\big[\frac{\sqrt{|c_0|}}{\gamma}(s+C_2),\gamma\big]}\big]+C_3\)\normalsize. Employing the identity \footnotesize\(\operatorname{cn}(x,k)=\sqrt{1-\operatorname{sn}^2(x,k)}\;\)\normalsize leads to
 
 \begin{equation}\label{eqn: soln k<0, c_0 neq 0, g: case 3.1}
 	\boxed{I_1:\qquad g=\tfrac{\pm 1}{\sqrt{|c_0|}}\operatorname{sech}^{-1}\Big[\operatorname{cn}\big[\tfrac{\sqrt{|c_0|}}{\gamma}(s+C_2),\gamma\big]\Big]+C_3}
 \end{equation}
 
 \textit{Calculating \(I_2\): } We refer \cite{La} page 53, Equation: 3.2.6 for this integral, obtaining \(I_2\) as: \footnotesize\(g=\tfrac{\pm 1}{C_4\sqrt{|c_0|}} \operatorname{cd}^{-1}\big[y,\tfrac{1}{C_4}\big] + C_3\)\normalsize. Using the identity \(\operatorname{cd}^{-1}\!\big(x,k\big)+\operatorname{sn}^{-1}(x,k)=\operatorname{sn}^{-1}(1,k)\) (see \cite{La} Page: 54 Equation: 3.2.16) and absorbing the constant \footnotesize\(\tfrac{\pm 1}{C_4\sqrt{|c_0|}}\operatorname{sn}^{-1}(1,\tfrac{1}{C_4})\;\)\normalsize into \(C_3\), we obtain \footnotesize\(g=\tfrac{\pm 1}{C_4\sqrt{|c_0|}} \operatorname{sn}^{-1}\big[y,\tfrac{1}{C_4}\big] + C_3\)\normalsize. Applying the identity \footnotesize\(\operatorname{sn}^{-1}\big[x,k\big]=\operatorname{cn}^{-1}\big[\sqrt{1-x^2},k\big]\)\normalsize, (see \cite{BF} Page: 31 Equation: 131.01) gives \footnotesize\(g=\tfrac{\pm 1}{ C_4\sqrt{|c_0|}} \operatorname{cn}^{-1}\big[\sqrt{1-y^2},\tfrac{1}{C_4}\big] + C_3\)\normalsize. Substituting back \(y\) and using \footnotesize\(\operatorname{cn}(x,k)=\sqrt{1-\operatorname{sn}^2(x,k)}\;\)\normalsize yeilds: 
 
 \begin{equation}\label{eqn: soln k<0, c_0 neq 0, g: case 3.2}
 	\boxed{I_2:\qquad g=\tfrac{\pm 1}{C_4\sqrt{|c_0|}} \operatorname{cn}^{-1}\big[\operatorname{cn}\big[\tfrac{\sqrt{|c_0|}}{\gamma}(s+C_2),\gamma\big],\tfrac{1}{C_4}\big] + C_3}
 \end{equation}
 
 \textit{Calculating \(I_3\): } The integral to be computed is the same as \(I_2\) except that here \(C_4^2<1\); we obtain \(I_2\) as \footnotesize\(g=\tfrac{\pm 1}{\sqrt{|c_0|}} \operatorname{cd}^{-1}\big[\tfrac{y}{C_4},C_4\big] + C_3\)\normalsize, Using the identity \(\operatorname{cd}^{-1}\!\big(x,k\big)+\operatorname{sn}^{-1}(x,k)=\operatorname{sn}^{-1}(1,k)\) (see \cite{La} Page: 54 Equation: 3.2.16) and absorbing the constant  \footnotesize\(\tfrac{\pm 1}{\sqrt{|c_0|}}\operatorname{sn}^{-1}(1,C_4)\;\)\normalsize into \(C_3\) we get \footnotesize\(g=\tfrac{\pm 1}{\sqrt{|c_0|}} \operatorname{sn}^{-1}\big[\tfrac{y}{C_4},C_4\big] + C_3\)\normalsize. Applying the identity \footnotesize\(\operatorname{sn}^{-1}\big[x,k\big]=\operatorname{dn}^{-1}\big[\sqrt{1-k^2x^2},k\big]\)\normalsize, (see \cite{BF} Page: 31 Equation: 131.01) yeilds \footnotesize\(g=\tfrac{\pm 1}{\sqrt{|c_0|}} \operatorname{dn}^{-1}\big[\sqrt{1-y^2},C_4\big] + C_3\)\normalsize. Substituting back \(y\) and using \footnotesize\(\operatorname{cn}(x,k)=\sqrt{1-\operatorname{sn}^2(x,k)}\;\)\normalsize gives
 
 \begin{equation}\label{eqn: soln k<0, c_0 neq 0, g: case 3.3}
 	\boxed{I_3:\qquad g=\tfrac{\pm 1}{\sqrt{|c_0|}} \operatorname{dn}^{-1}\big[\operatorname{cn}\big[\tfrac{\sqrt{|c_0|}}{\gamma}(s+C_2),\gamma\big],C_4\big] + C_3}
 \end{equation}
 
  \textbf{Finding Transformed Minimal Surfaces:} We compute the transformed minimal surfaces by composing each minimal graph surface with its corresponding minimal graph transformation.  
  
  \textit{CASE 1:} For the minimal graph surface given by Equation~\ref{eqn: soln k<0, c_0 neq 0, f: case 1}, the corresponding minimal graph transformations are given by \ref{eqn: soln k<0, c_0 neq 0, g: case 1.1} and \ref{eqn: soln k<0, c_0 neq 0, g: case 1.2}.  
  
  \noindent
  Applying \ref{eqn: soln k<0, c_0 neq 0, g: case 1.1} yields the first transformed minimal surface in this case as 
   
  \(
  g\circ f=\pm\tfrac{\Gamma}{\sqrt{|c_0|}}\operatorname{cn}^{-1}\!\Big[\tfrac{1}{\sqrt{2}}\sqrt{1 + \cosh\!\big(\ln\!\big[\tanh(\tfrac{\sqrt c_0(z+c_1)}{2})\coth(\tfrac{\sqrt{\bar c_0}(\bar z+\bar c_1)}{2})\big]\big)},\Gamma\Big]-C_2 
  \) 
  
  \noindent
  with \(\Gamma=\tfrac{1}{\sqrt{1+C_4^2}}\); this is exactly of the form \ref{eqn: soln k<0, c_0 neq 0, f: case 3}. Applying \ref{eqn: soln k<0, c_0 neq 0, g: case 1.2} gives the second transformed minimal surface in this case as 
   
  \(
  g\circ f=\tfrac{\pm1}{\sqrt{|c_0|}}\operatorname{dn}^{-1}\!\Big[\tfrac{1}{\sqrt{2}}\sqrt{1 + \cosh\!\big(\ln\!\big[\tanh(\tfrac{\sqrt c_0(z+c_1)}{2})\coth(\tfrac{\sqrt{\bar c_0}(\bar z+\bar c_1)}{2})\big]\big)},\Gamma\Big]-C_2
  \)  
  
  \noindent
  with \(\Gamma=\sqrt{1-C_4^2}\); this is exactly of the form \ref{eqn: soln k<0, c_0 neq 0, f: case 2}. \\ 
  
  \textit{CASE 2:} For the minimal graph surface given by Equation~\ref{eqn: soln k<0, c_0 neq 0, f: case 2}, the corresponding minimal graph transformations are given by \ref{eqn: soln k<0, c_0 neq 0, g: case 2.1}, \ref{eqn: soln k<0, c_0 neq 0, g: case 2.2} and \ref{eqn: soln k<0, c_0 neq 0, g: case 2.3}.   Applying \ref{eqn: soln k<0, c_0 neq 0, g: case 2.1} yields  
  
  \(
  g\circ f=\pm\tfrac{1}{\sqrt{|c_0|}}\operatorname{sech}^{-1}\!\Big[\tfrac{1}{\sqrt{2}}\sqrt{1 + \cosh\!\big(\ln\!\big[\tanh(\tfrac{\sqrt c_0(z+c_1)}{2})\coth(\tfrac{\sqrt{\bar c_0}(\bar z+\bar c_1)}{2})\big]\big)}\;\Big]-C_2 
  \)  
  
  \noindent
  which is exactly of the form \ref{eqn: soln k<0, c_0 neq 0, f: case 1}.  Applying \ref{eqn: soln k<0, c_0 neq 0, g: case 2.2} gives  
  
  \(
  g\circ f=\pm\tfrac{\Gamma}{\sqrt{|c_0|}}\operatorname{cn}^{-1}\!\Big[\tfrac{1}{\sqrt{2}}\sqrt{1 + \cosh\!\big(\ln\!\big[\tanh(\tfrac{\sqrt c_0(z+c_1)}{2})\coth(\tfrac{\sqrt{\bar c_0}(\bar z+\bar c_1)}{2})\big]\big)},\Gamma\Big]-C_2 
  \)  
  
  \noindent
  with \(\Gamma=\tfrac{1}{\gamma C_4}\); this is exactly of the form \ref{eqn: soln k<0, c_0 neq 0, f: case 3}.  Applying \ref{eqn: soln k<0, c_0 neq 0, g: case 2.3} yields  
  
  \(
  g\circ f=\tfrac{\pm1}{\sqrt{|c_0|}}\operatorname{dn}^{-1}\!\Big[\tfrac{1}{\sqrt{2}}\sqrt{1 + \cosh\!\big(\ln\!\big[\tanh(\tfrac{\sqrt c_0(z+c_1)}{2})\coth(\tfrac{\sqrt{\bar c_0}(\bar z+\bar c_1)}{2})\big]\big)},\Gamma\Big]-C_2
  \) 
  
  \noindent
  with \(\Gamma=C_4\gamma\); this is exactly of the form \ref{eqn: soln k<0, c_0 neq 0, f: case 2}.  \\
  
  \textit{CASE 3:} For the minimal graph surface given by Equation~\ref{eqn: soln k<0, c_0 neq 0, f: case 3}, the corresponding minimal graph transformations are given by \ref{eqn: soln k<0, c_0 neq 0, g: case 3.1}, \ref{eqn: soln k<0, c_0 neq 0, g: case 3.2} and \ref{eqn: soln k<0, c_0 neq 0, g: case 3.3}.  
  
  \noindent
  Applying \ref{eqn: soln k<0, c_0 neq 0, g: case 3.1} yields  
  
  \(
  g\circ f=\pm\tfrac{1}{\sqrt{|c_0|}}\operatorname{sech}^{-1}\!\Big[\tfrac{1}{\sqrt{2}}\sqrt{1 + \cosh\!\big(\ln\!\big[\tanh(\tfrac{\sqrt c_0(z+c_1)}{2})\coth(\tfrac{\sqrt{\bar c_0}(\bar z+\bar c_1)}{2})\big]\big)}\;\Big]-C_2 
  \)  
  
  \noindent
  which is exactly of the form \ref{eqn: soln k<0, c_0 neq 0, f: case 1}.   Applying \ref{eqn: soln k<0, c_0 neq 0, g: case 3.2} gives  
  
  \(
  g\circ f=\pm\tfrac{\Gamma}{\sqrt{|c_0|}}\operatorname{cn}^{-1}\!\Big[\tfrac{1}{\sqrt{2}}\sqrt{1 + \cosh\!\big(\ln\!\big[\tanh(\tfrac{\sqrt c_0(z+c_1)}{2})\coth(\tfrac{\sqrt{\bar c_0}(\bar z+\bar c_1)}{2})\big]\big)},\Gamma\Big]-C_2 
  \)
  
  \noindent
  with \(\Gamma=\tfrac{1}{C_4}\); this is exactly of the form \ref{eqn: soln k<0, c_0 neq 0, f: case 3}.   Applying \ref{eqn: soln k<0, c_0 neq 0, g: case 3.3} yields  
  
  \(
  g\circ f=\tfrac{\pm1}{\sqrt{|c_0|}}\operatorname{dn}^{-1}\!\Big[\tfrac{1}{\sqrt{2}}\sqrt{1 + \cosh\!\big(\ln\!\big[\tanh(\tfrac{\sqrt c_0(z+c_1)}{2})\coth(\tfrac{\sqrt{\bar c_0}(\bar z+\bar c_1)}{2})\big]\big)},\Gamma\Big]-C_2
  \)
  
  \noindent
  with \(\Gamma=C_4\); this is exactly of the form \ref{eqn: soln k<0, c_0 neq 0, f: case 2}.
  
  \begin{remark}
  	The minimal graph surface presented in the above subsection (see Equations: \ref{eqn: soln k<0, c_0 neq 0, f: case 1}, \ref{eqn: soln k<0, c_0 neq 0, f: case 2}, \ref{eqn: soln k<0, c_0 neq 0, f: case 3} ) constitutes, to the best of the author's knowledge, again a new class of minimal surfaces.
  \end{remark}
  
  \section{Further Discussions}
  
  We have thus given a complete classification of all minimal graph surfaces \(f\) that can be composed with a smooth real‑valued function \(g\) such that \(g\circ f\) is again a minimal graph surface. Consequently, we have also classified those minimal graph transformations and the transformed minimal surfaces corresponding to each of them. Next we assign suitable names to the minimal surfaces described in Equations \ref{eqn: soln k<0, c_0 neq 0, f: case 1}, \ref{eqn: soln k<0, c_0 neq 0, f: case 2}, \ref{eqn: soln k<0, c_0 neq 0, f: case 3} and \ref{eqn: soln k>0, c_0 neq 0, f}.
  
\begin{enumerate}
	\item Because of its resemblance to long columns of pillars, we name the minimal graph surface obtained in the case \(k>0,\; c_0\neq 0\) and given by equation \ref{eqn: soln k>0, c_0 neq 0, f} the \textbf{Pillars}.
	\item Because of its similarity to an infinitely high wall, we name the minimal graph surface obtained in the case \(k<0,\; c_0\neq 0,\; C_1-\tfrac{2|c_0|}{k}=0\) and given by equation \ref{eqn: soln k<0, c_0 neq 0, f: case 1} the \textbf{Great Wall}.
	\item Because of its resemblance to a wall of moderate height but significant thickness, we name the minimal graph surface obtained in the case \(k<0,\; c_0\neq 0,\; C_1-\tfrac{2|c_0|}{k}>0\) and given by equation \ref{eqn: soln k<0, c_0 neq 0, f: case 2} the \textbf{Thick Wall}.
	\item Because of its similarity to a wall of moderate height but with a sharp edge, we name the minimal graph surface obtained in the case \(k<0,\; c_0\neq 0,\; C_1-\tfrac{2|c_0|}{k}<0\) and given by equation \ref{eqn: soln k<0, c_0 neq 0, f: case 3} the \textbf{Sharp Wall}.
\end{enumerate}

 The flowchart given in Figure~\ref{pic: flow chart} briefly summarises all the work done up to this section in the paper.
  
 \subsection{Singular Point Analysis and Domain of Well Definedness}
  In this section we analyse the singular points of the minimal graph surfaces given by equations \ref{eqn: soln k<0, c_0 neq 0, f: case 1}, \ref{eqn: soln k<0, c_0 neq 0, f: case 2}, \ref{eqn: soln k<0, c_0 neq 0, f: case 3} and \ref{eqn: soln k>0, c_0 neq 0, f}, and determine the domains on which these surfaces are defined.
  
  \subsubsection{Pillars}
  The Pillars is given by \footnotesize\(f=\tfrac{\pm \gamma'}{\sqrt{|c_0|}}\operatorname{cn}^{-1}\!\big[\tfrac{\gamma'}{\gamma}\sqrt{\cosh\!\big[2\ln\!|\tanh(\tfrac{\sqrt{c_0}(z + c_1)}{2})|\big]-1}\;,\;\gamma\big]-C_2\)\normalsize; from the derivative formula \(\tfrac{d}{dy}\operatorname{cn}^{-1}(y,k)=-\tfrac{1}{\sqrt{[1-y^2][k'^2+k^2y^2]}}\) (see \cite{JD}, Page: 250, Equation: 12.4.1.1.4) it follows that \(y=1\) is a singular point, which in our setting corresponds to the singular curve \(\tfrac{\gamma'}{\gamma}\sqrt{\cosh\!\big[2\ln\!|\tanh(\tfrac{\sqrt{c_0}(z + c_1)}{2})|\big]-1}=1\); additionally the square‑root in the expression for \(f\) produces a singularity when its argument vanishes, giving the second singular curve \(\cosh\!\big[2\ln\!|\tanh(\tfrac{\sqrt{c_0}(z + c_1)}{2})|\big]-1=0\). Simplifying the first curve using \(\cosh[\ln(x)]=\frac{x+\frac{1}{x}}{2}\) yields \(|\tanh(\tfrac{\sqrt{c_0}(z + c_1)}{2})|^2+\frac{1}{|\tanh(\frac{\sqrt{c_0}(z + c_1)}{2})|^2}=\tfrac{2}{\gamma'^2}\); denoting the right‑hand side by \(r\) and solving gives \(|\tanh(\tfrac{\sqrt{c_0}(z + c_1)}{2})|=\frac{[\sqrt{r+2}\;]\pm[\sqrt{r-2}\;]}{2}\). The second curve simplifies directly to \(|\tanh(\tfrac{\sqrt{c_0}(z + c_1)}{2})|=1\).
    \begin{figure}[htbp]
  	\centering
  	\begin{subfigure}{0.3\textwidth}
  		\centering
  		\includegraphics[width=\linewidth]{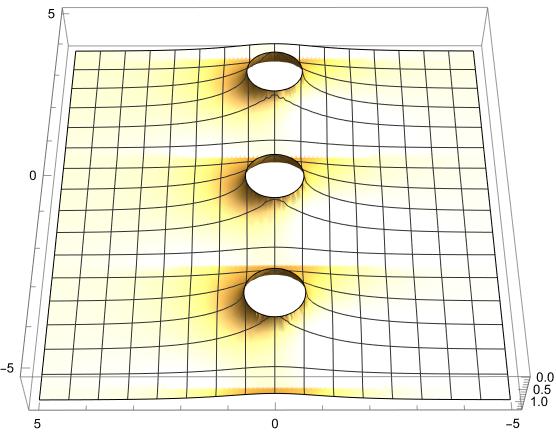}
  		\caption{\footnotesize Pillars \normalsize}
  	\end{subfigure}
  	\hfill
  	\begin{subfigure}{0.24\textwidth}
  		\centering
  		\includegraphics[width=\linewidth]{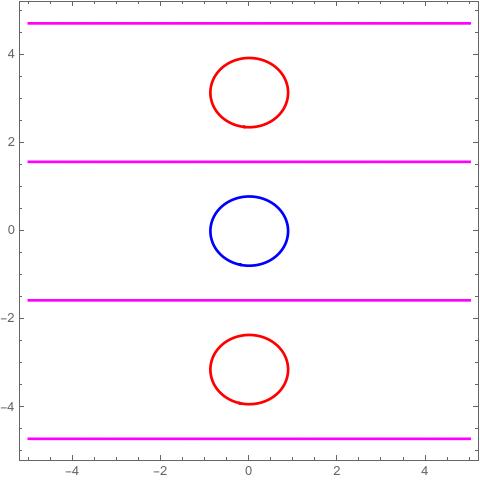}
  		\caption{\footnotesize Singular curves \normalsize}
  	\end{subfigure}
    \hfill
    \begin{subfigure}{0.25\textwidth}
   	    \centering
    	\includegraphics[width=\linewidth]{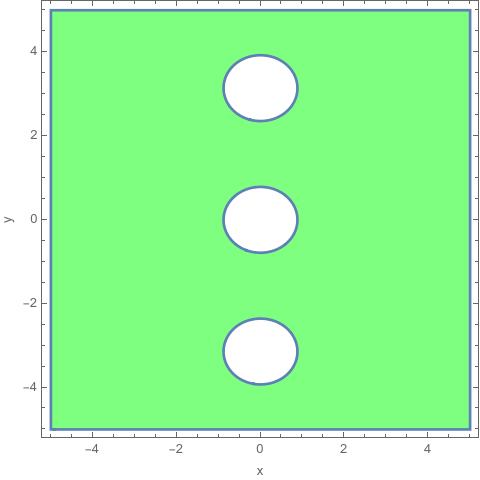}
   	    \caption{\footnotesize Domain \normalsize}
   \end{subfigure}
    \caption{\footnotesize The images above compare the Pillars surface with a contour plot of its singularities and the domain where it is well defined. The magenta line depicts the singular curve \(|\tanh(\tfrac{\sqrt{c_0}(z + c_1)}{2})|=1\); the red and blue lines represent the singular curves \(|\tanh(\tfrac{\sqrt{c_0}(z + c_1)}{2})|=\frac{[\sqrt{r+2}\,]\,+\,\epsilon[\sqrt{r-2}\,]}{2}\) with \(\epsilon = +1\) and \(\epsilon = -1\), respectively. The region shaded in green indicates the domain on which the minimal graph surface is well defined. The parameters are chosen exactly same as Figure \ref{pic: Pillars} \normalsize}
    \label{pic: singular point analysis Pillars}
   \end{figure}

  We note that these are the only singular curves for the Pillars surface; this is also clearly visible in Figure~\ref{pic: singular point analysis Pillars}. On these curves \(f_z\) and \(f_{\bar z}\) blow up to infinity, so \(\|\nabla f\|^2\) also diverges to infinity, which in turn forces the first characteristic function \(h(f)=\tfrac{\Delta f}{\|\nabla f\|^2}\) to vanish along these singular curves. Consequently, these singular curves are precisely the curves that must be excluded in the choice of domain as described in Section~\ref{section: assumptions}.

 To determine the domain on which the surface is well defined we observe that the range of the elliptic function \(\operatorname{cn}\) is the interval \([-1,1]\); thus we must have the inequality \(-1\leq\tfrac{\gamma'}{\gamma}\sqrt{\cosh\!\big[2\ln\!|\tanh(\tfrac{\sqrt{c_0}(z + c_1)}{2})|\big]-1}\leq 1\). Squaring and simplifying yields following inequality: \(|\tanh(\tfrac{\sqrt{c_0}(z + c_1)}{2})|^2+\frac{1}{|\tanh(\frac{\sqrt{c_0}(z + c_1)}{2})|^2}\leq \tfrac{2}{\gamma'^2}\). The region satisfying this inequality is shaded green in Figure~\ref{pic: singular point analysis Pillars}.

 \subsubsection{Great Wall}
  We have \footnotesize\(f=\pm\tfrac{1}{\sqrt{|c_0|}}\operatorname{sech}^{-1}\Big[\tfrac{1}{\sqrt{2}}\sqrt{1 + \cosh\big(\ln\big[\tanh(\tfrac{\sqrt c_0(z+c_1)}{2})\coth(\tfrac{\sqrt{\bar c_0}(\bar z+\bar c_1)}{2})\big]\big)}\;\Big]-C_2\)\normalsize; from the derivative formula \(\tfrac{d}{dy}\operatorname{sech}^{-1}(y)=-\tfrac{1}{y\sqrt{1-y^2}}\) (see \cite{JD}, Page: 152, Equation: 3.5.1.11) it follows that \(y=1\) and \(y=0\) are singular points, which in our setting correspond to the following singular curves \(\tfrac{1}{\sqrt{2}}\sqrt{1 + \cosh\!\big(\ln\!\big[\tanh(\tfrac{\sqrt c_0(z+c_1)}{2})\coth(\tfrac{\sqrt{\bar c_0}(\bar z+\bar c_1)}{2})\big]\big)}=1\) and \(\tfrac{1}{\sqrt{2}}\sqrt{1 + \cosh\!\big(\ln\!\big[\tanh(\tfrac{\sqrt c_0(z+c_1)}{2})\coth(\tfrac{\sqrt{\bar c_0}(\bar z+\bar c_1)}{2})\big]\big)}=0\). The square‑root in the expression for \(f\) also introduces a singularity when its argument vanishes, but this coincides with the second equation. Using the identity: \(\cosh[\ln(x)]=\frac{x+\frac{1}{x}}{2}\), the first curve simplifies to the equation: \(\Re\!\big[\tanh(\tfrac{\sqrt c_0(z+c_1)}{2})\coth(\tfrac{\sqrt{\bar c_0}(\bar z+\bar c_1)}{2})\big]=1\), and the second curve simplifies to \(\Re\!\big[\tanh(\tfrac{\sqrt c_0(z+c_1)}{2})\coth(\tfrac{\sqrt{\bar c_0}(\bar z+\bar c_1)}{2})\big]=-1\). These curves are plotted below.
 \begin{figure}[htbp]
 	\centering
 	\begin{subfigure}{0.26\textwidth}
 		\centering
 		\includegraphics[width=\linewidth]{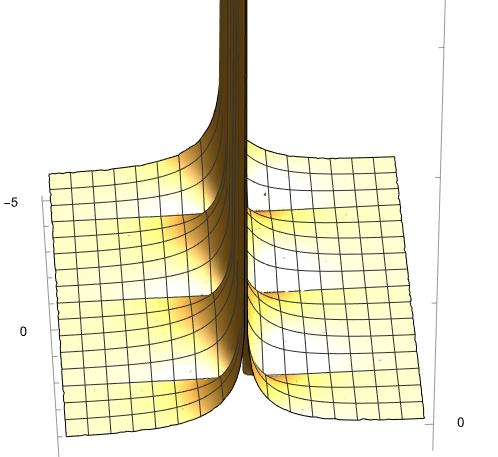}
 		\caption{\footnotesize Great Wall \normalsize}
 	\end{subfigure}
 	\hfill
 	\begin{subfigure}{0.24\textwidth}
 		\centering
 		\includegraphics[width=\linewidth]{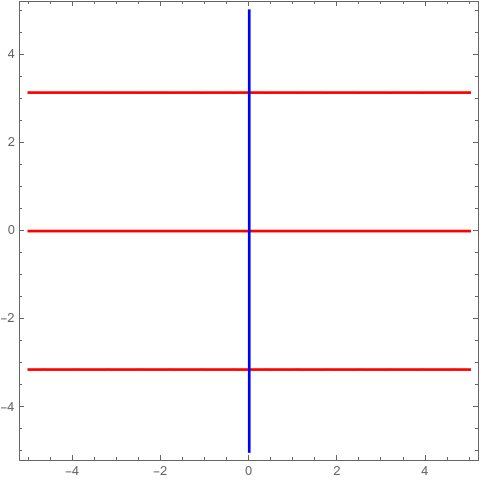}
 		\caption{\footnotesize Singular curves \normalsize}
 	\end{subfigure}
 	\hfill
 	\begin{subfigure}{0.25\textwidth}
 		\centering
 		\includegraphics[width=\linewidth]{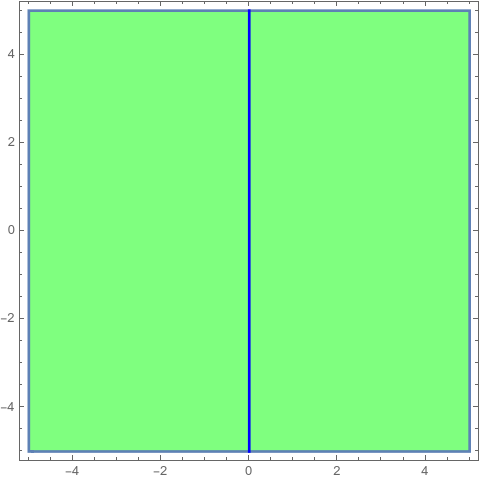}
 		\caption{\footnotesize Domain \normalsize}
 	\end{subfigure}
 	\caption{\footnotesize The images above compare the Great Wall surface with a contour plot of its singularities and the domain where it is well defined. The red lines depict the singular curve \(\Re\!\big[\tanh(\tfrac{\sqrt c_0(z+c_1)}{2})\coth(\tfrac{\sqrt{\bar c_0}(\bar z+\bar c_1)}{2})\big]=1\); the blue lines represent the singular curve \(\Re\!\big[\tanh(\tfrac{\sqrt c_0(z+c_1)}{2})\coth(\tfrac{\sqrt{\bar c_0}(\bar z+\bar c_1)}{2})\big]=-1\). The region shaded in green indicates the domain on which the minimal graph surface is well defined and on the blue line the minimal graph surface blows up to infinity. The parameters are chosen exactly same as Figure \ref{pic: Great Wall} \normalsize}
 	\label{pic: singular point analysis Great Wall}
 \end{figure}
 
 We note that these are the only singular curves for the Great Wall surface; this is also clearly visible in Figure~\ref{pic: singular point analysis Great Wall}. On these curves \(f_z\) and \(f_{\bar z}\) blow up to infinity, so \(\|\nabla f\|^2\) also diverges to infinity, which in turn forces the first characteristic function \(h(f)=\tfrac{\Delta f}{\|\nabla f\|^2}\) to vanish along these singular curves. Consequently, these singular curves are precisely the curves that must be excluded in the choice of domain as described in Section~\ref{section: assumptions}.
 
 To determine the domain on which the surface is well defined we observe that the range of the hyperbolic function \(\operatorname{sech}\) is the interval \((0,1]\); thus we must have the inequality \(0<\tfrac{1}{\sqrt{2}}\sqrt{1 + \cosh\big(\ln\big[\tanh(\tfrac{\sqrt c_0(z+c_1)}{2})\coth(\tfrac{\sqrt{\bar c_0}(\bar z+\bar c_1)}{2})\big]\big)}\leq 1\). Squaring and simplifying the inequality yeilds: \(-1<\Re\!\big[\tanh(\tfrac{\sqrt c_0(z+c_1)}{2})\coth(\tfrac{\sqrt{\bar c_0}(\bar z+\bar c_1)}{2})\big]\leq 1\). The region satisfying this inequality is shaded green in Figure~\ref{pic: singular point analysis Great Wall}.
  
  \subsubsection{Thick Wall}
  We have \footnotesize\(f=\tfrac{\pm1}{\sqrt{|c_0|}}\operatorname{dn}^{-1}\Big[\tfrac{1}{\sqrt{2}}\sqrt{1 + \cosh\big(\ln\big[\tanh(\tfrac{\sqrt c_0(z+c_1)}{2})\coth(\tfrac{\sqrt{\bar c_0}(\bar z+\bar c_1)}{2})\big]\big)},\gamma\Big]-C_2\)\normalsize. The derivative formula \(\tfrac{d}{dy}\operatorname{dn}^{-1}(y,k)=-\tfrac{1}{\sqrt{[1-y^2][y^2-k'^2]}}\) (see \cite{JD}, Page: 250, Equation: 12.4.1.1.5) shows that \(y=1\) and \(y=k'\) are singular points; in our context these give the singular curves, given by the equation \(\tfrac{1}{\sqrt{2}}\sqrt{1 + \cosh\!\big(\ln\!\big[\tanh(\tfrac{\sqrt c_0(z+c_1)}{2})\coth(\tfrac{\sqrt{\bar c_0}(\bar z+\bar c_1)}{2})\big]\big)}=1\) and \(\tfrac{1}{\sqrt{2}}\sqrt{1 + \cosh\!\big(\ln\!\big[\tanh(\tfrac{\sqrt c_0(z+c_1)}{2})\coth(\tfrac{\sqrt{\bar c_0}(\bar z+\bar c_1)}{2})\big]\big)}=\gamma'\). A further potential singularity could arise if the argument of the outer square‑root vanished, but this never happens because it would lead to the negative term \(-\gamma'^2\) inside the square‑root appearing in the derivative of \(\operatorname{dn}^{-1}\). Using \(\cosh[\ln(x)]=\frac{x+\frac{1}{x}}{2}\), the first curve simplifies to the equation \(\Re\!\big[\tanh(\tfrac{\sqrt c_0(z+c_1)}{2})\coth(\tfrac{\sqrt{\bar c_0}(\bar z+\bar c_1)}{2})\big]=1\) and the second to the equation \(\Re\!\big[\tanh(\tfrac{\sqrt c_0(z+c_1)}{2})\coth(\tfrac{\sqrt{\bar c_0}(\bar z+\bar c_1)}{2})\big]=2\gamma'^2-1\). These curves are plotted below.
  \begin{figure}[htbp]
  	\centering
  	\begin{subfigure}{0.25\textwidth}
  		\centering
  		\includegraphics[width=\linewidth]{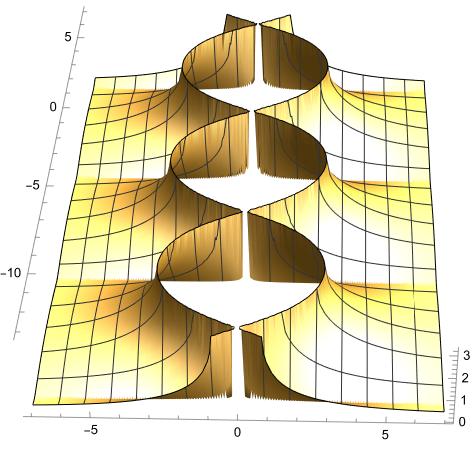}
  		\caption{\footnotesize Thick Wall \normalsize}
  	\end{subfigure}
  	\hfill
  	\begin{subfigure}{0.25\textwidth}
  		\centering
  		\includegraphics[width=\linewidth]{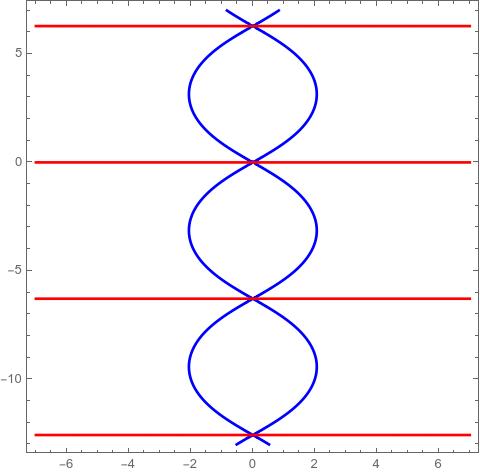}
  		\caption{\footnotesize Singular curves \normalsize}
  	\end{subfigure}
  	\hfill
  	\begin{subfigure}{0.26\textwidth}
  		\centering
  		\includegraphics[width=\linewidth]{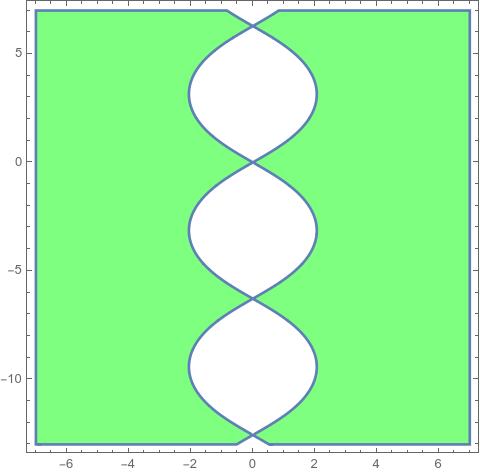}
  		\caption{\footnotesize Domain \normalsize}
  	\end{subfigure}
  	\caption{\footnotesize The images above compare the Thick Wall surface with a contour plot of its singularities and the domain where it is well defined. The red lines depict the singular curve \(\Re\!\big[\tanh(\tfrac{\sqrt c_0(z+c_1)}{2})\coth(\tfrac{\sqrt{\bar c_0}(\bar z+\bar c_1)}{2})\big]=1\); the blue lines represent the singular curve \(\Re\!\big[\tanh(\tfrac{\sqrt c_0(z+c_1)}{2})\coth(\tfrac{\sqrt{\bar c_0}(\bar z+\bar c_1)}{2})\big]=2\gamma'^2-1\). The region shaded in green indicates the domain on which the minimal graph surface is well defined. The parameters are chosen exactly as in Figure~\ref{pic: Thick Wall}. \normalsize}
  	\label{pic: singular point analysis Thick Wall}
  \end{figure}
  
  We note that these are the only singular curves for the Thick Wall surface; this is also clearly visible in Figure~\ref{pic: singular point analysis Thick Wall}. On these curves \(f_z\) and \(f_{\bar z}\) blow up to infinity, so \(\|\nabla f\|^2\) also diverges to infinity, which in turn forces the first characteristic function \(h(f)=\tfrac{\Delta f}{\|\nabla f\|^2}\) to vanish along these singular curves. Consequently, these singular curves are precisely the curves that must be excluded in the choice of domain as described in Section~\ref{section: assumptions}.
  
  To determine the domain on which the surface is well defined we note that the elliptic function \(\operatorname{dn}\) takes values in the interval \([\gamma',1]\); thus we must have the inequality \(\gamma'\leq\tfrac{1}{\sqrt{2}}\sqrt{1 + \cosh\!\big(\ln\!\big[\tanh(\tfrac{\sqrt c_0(z+c_1)}{2})\coth(\tfrac{\sqrt{\bar c_0}(\bar z+\bar c_1)}{2})\big]\big)}\leq 1\), which after squaring and simplifying yields the condition \(2\gamma'^2-1<\Re\!\big[\tanh(\tfrac{\sqrt c_0(z+c_1)}{2})\coth(\tfrac{\sqrt{\bar c_0}(\bar z+\bar c_1)}{2})\big]\leq 1\); the region satisfying this inequality is shaded green in Figure~\ref{pic: singular point analysis Thick Wall}.
  
  \subsubsection{Sharp Wall}
  The Sharp Wall is given by \footnotesize\(f=\pm\tfrac{\gamma}{\sqrt{|c_0|}}\operatorname{cn}^{-1}\Big[\tfrac{1}{\sqrt{2}}\sqrt{1 + \cosh\big(\ln\big[\tanh(\tfrac{\sqrt c_0(z+c_1)}{2})\coth(\tfrac{\sqrt{\bar c_0}(\bar z+\bar c_1)}{2})\big]\big)},\gamma\Big]-C_2\)\normalsize. From the derivative \(\tfrac{d}{dy}\operatorname{cn}^{-1}(y,k)=-\tfrac{1}{\sqrt{[1-y^2][k'^2+k^2y^2]}}\) (see \cite{JD}, Page: 250, Equation: 12.4.1.1.4), we find that the point \(y=1\) is singular; this in our case gives the curve \(\tfrac{1}{\sqrt{2}}\sqrt{1 + \cosh\!\big(\ln\!\big[\tanh(\tfrac{\sqrt c_0(z+c_1)}{2})\coth(\tfrac{\sqrt{\bar c_0}(\bar z+\bar c_1)}{2})\big]\big)}=1\). A second potential singularity occurs when argument of the squareroot inside the expression for \(f\) vanishes, that is, \(1 + \cosh\!\big(\ln\!\big[\tanh(\tfrac{\sqrt c_0(z+c_1)}{2})\coth(\tfrac{\sqrt{\bar c_0}(\bar z+\bar c_1)}{2})\big]\big)=0\). Using \(\cosh[\ln(x)]=\frac{x+\frac{1}{x}}{2}\), the first condition reduces to \(\Re\!\big[\tanh(\tfrac{\sqrt c_0(z+c_1)}{2})\coth(\tfrac{\sqrt{\bar c_0}(\bar z+\bar c_1)}{2})\big]=1\) and the second to \(\Re\!\big[\tanh(\tfrac{\sqrt c_0(z+c_1)}{2})\coth(\tfrac{\sqrt{\bar c_0}(\bar z+\bar c_1)}{2})\big]=-1\). These curves are plotted in Figure: \ref{pic: singular point analysis Sharp Wall}
  \begin{figure}[htbp]
  	\centering
  	\begin{subfigure}{0.29\textwidth}
  		\centering
  		\includegraphics[width=\linewidth]{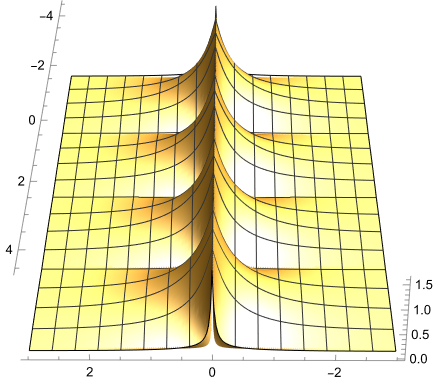}
  		\caption{\footnotesize Sharp Wall \normalsize}
  	\end{subfigure}
  	\hfill
  	\begin{subfigure}{0.25\textwidth}
  		\centering
  		\includegraphics[width=\linewidth]{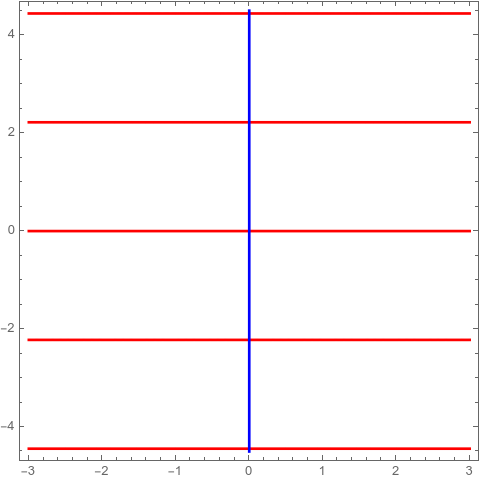}
  		\caption{\footnotesize Singular curves  \normalsize}
  	\end{subfigure}
  	\hfill
  	\begin{subfigure}{0.26\textwidth}
  		\centering
  		\includegraphics[width=\linewidth]{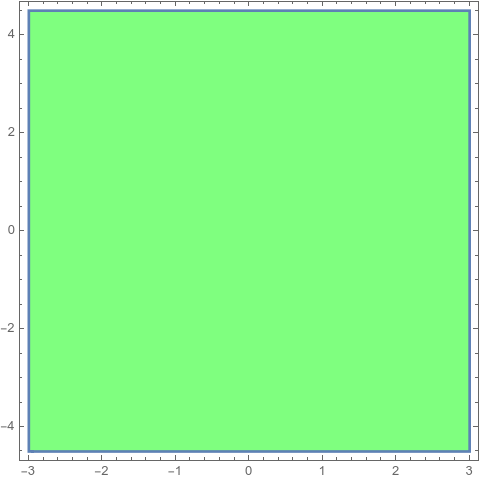}
  		\caption{\footnotesize Domain  \normalsize}
  	\end{subfigure}
  	\caption{\footnotesize The images above compare the Sharp Wall surface with a contour plot of its singularities and the domain where it is well defined. The red lines depict the singular curve \(\Re\!\big[\tanh(\tfrac{\sqrt c_0(z+c_1)}{2})\coth(\tfrac{\sqrt{\bar c_0}(\bar z+\bar c_1)}{2})\big]=1\); the blue lines represent the singular curve \(\Re\!\big[\tanh(\tfrac{\sqrt c_0(z+c_1)}{2})\coth(\tfrac{\sqrt{\bar c_0}(\bar z+\bar c_1)}{2})\big]=-1\). The region shaded in green indicates the domain on which the minimal graph surface is well defined. The parameters are chosen exactly as in Figure~\ref{pic: Sharp Wall}. \normalsize}
  	\label{pic: singular point analysis Sharp Wall}
  \end{figure}
  
  We note that these are the only singular curves for the Sharp Wall surface; this is also clearly visible in Figure~\ref{pic: singular point analysis Sharp Wall}. On these curves \(f_z\) and \(f_{\bar z}\) blow up to infinity, so \(\|\nabla f\|^2\) also diverges to infinity, which in turn forces the first characteristic function \(h(f)=\tfrac{\Delta f}{\|\nabla f\|^2}\) to vanish along these singular curves. Consequently, these singular curves are precisely the curves that must be excluded in the choice of domain as described in Section~\ref{section: assumptions}.
  
  To determine the domain on which the surface is well defined we note that the elliptic function \(\operatorname{cn}\) takes values in the interval \([-1,1]\); thus we must have the inequality \(-1\leq\tfrac{1}{\sqrt{2}}\sqrt{1 + \cosh\!\big(\ln\!\big[\tanh(\tfrac{\sqrt c_0(z+c_1)}{2})\coth(\tfrac{\sqrt{\bar c_0}(\bar z+\bar c_1)}{2})\big]\big)}\leq 1\), which after squaring and simplifying yields the condition \(-1\leq\Re\!\big[\tanh(\tfrac{\sqrt c_0(z+c_1)}{2})\coth(\tfrac{\sqrt{\bar c_0}(\bar z+\bar c_1)}{2})\big]\leq 1\); the region satisfying this inequality is infact the whole \(\mathbb{R}^2\) and hence this minimal surface is defined every where on \(\mathbb{R}^2\).
  
  \section{Future Directions}
  In this section we present several applications of the techniques developed for solving the minimal graph transformation problem.
  
  \subsection{Minimal-Maximal Graph Transformations}
  Let \(f:\Omega\longrightarrow\mathbb{R}\) be a minimal graph surface and \(g:I=f(\Omega)\longrightarrow\mathbb{R}\) a smooth function. We say that \(g\) is a \textbf{Minimal–Maximal Graph Transformation} of \(f\) if \(g\circ f\) is a maximal graph surface. Analogous to Proposition~\ref{thm: Alt. def. MGT} and Equation~\ref{eqn: Alt. def. MGT}, this condition can be expressed as a mathematical equation
  \[
  g''(f)\,\|\nabla f\|^2+[g'^{3}+g'](f)\,\Delta f=0 .
  \]
  After considering the analyticity of minimal and maximal surfaces and excluding certain singular cases, the system reduces to:
  \[
  \frac{\Delta f}{\|\nabla f\|^2}=\frac{-g''(f)}{[g'^{3}+g'](f)}=h(f), \qquad\text{and}\qquad \text{Minimal Surface Equation: }\text{Equation }\ref{eqn: MSE},
  \]
  for some smooth function \(h\). This system is identical to the one obtained for minimal graph transformations, except that here \(h=\frac{-g''}{g'^{3}+g'}\) whereas in the minimal graph transformation we had the \(h=\frac{g''}{g'^{3}-g'}\). Consequently, the techniques developed for solving the minimal graph transformation problem apply directly; the only remaining step is to compute the minimal–maximal graph transformation by solving the ODE \(\frac{-g''(s)}{[g'^{3}+g'](s)}=h(s)\). Hence, the minimal surfaces that admit a minimal–maximal graph transformation are precisely those minimal graph surfaces that admit a minimal graph transformation, and they have been exhaustively classified in this paper. Since we already possess an exhaustive list of first characteristic functions \(h\) corresponding to these surfaces, the task reduces to solving the ODE \(\frac{-g''(s)}{[g'^{3}+g'](s)}=h(s)\) for each such \(h\); the resulting \(g\) gives the minimal–maximal graph transformation, and \(g\circ f\) is the associated maximal graph surface. Indeed, one might be able to derive a general formula analogous to Equation: \ref{eqn: conv h to g} from which \(g\) can be obtained directly for any admissible \(h\).
  
  \subsection{Maximal-Minimal Graph Transformations}
  In a similar way, one can define a Maximal–Minimal Graph Transformation as follows: Let \(f:\Omega\longrightarrow\mathbb{R}\) be a maximal graph surface and \(g:I=f(\Omega)\longrightarrow\mathbb{R}\) a smooth function. We say that \(g\) is a \textbf{Maximal–Minimal Graph Transformation} of \(f\) if \(g\circ f\) is a minimal graph surface. The classification of such transformations is straightforward: they are precisely the inverses of minimal–maximal graph transformations. 
  
  \subsection{Maximal Graph Transformations}
   Analogous to a minimal graph transformation, we define the following: Let \(f:\Omega\longrightarrow\mathbb{R}\) be a maximal graph surface and \(g:I=f(\Omega)\longrightarrow\mathbb{R}\) a smooth function. We say that \(g\) is a \textbf{maximal graph transformation} of \(f\) if \(g\circ f\) is a maximal graph surface. This condition can be expressed, as in Proposition~\ref{thm: Alt. def. MGT} and Equation~\ref{eqn: Alt. def. MGT}, by the equation: 
   \[
     g''(f)\,\|\nabla f\|^2-[g'^{3}-g'](f)\,\Delta f=0.
   \]
   The maximal graph surface \(f\) is real analytic because it is a spacelike solution of an elliptic PDE satisfying certain conditions, (see, hypotheses of Theorem~\ref{thm: Nonlinear Elliptic PDE: analytic solns}, Appendix: \ref{section: Appendix analytic function}). After imposing the usual analyticity requirements for maximal surfaces, the system reduces to
   \[
   \frac{\Delta f}{\|\nabla f\|^2}=\frac{g''(f)}{[g'^{3}-g'](f)}=h(f), \qquad\text{and}\qquad \text{Maximal Surface Equation},
   \]
   for some smooth function \(h\). The maximal surface equation can be written as
   \[
     (1 - f_y^2) f_{xx} + 2 f_{xy} f_x f_y + (1 - f_x^2) f_{yy} = 0
   \]   
   This system is identical to the one obtained for minimal graph transformations, except that the minimal surface equation is replaced by the maximal surface equation. However, the relation \(\frac{\Delta f}{\|\nabla f\|^2}=h(f)\) allows us to convert the system into a complex PDE in the variables \(z,\bar z\) with the additional condition that the solution be harmonic. It may be possible to apply the weakening technique to this complex PDE, and, if so, one might thereby obtain a classification of maximal graph transformations.
   
   \section{Acknowledgement}   
   The author is very grateful to his advisor Dr. Rukmini Dey, ICTS-TIFR for teaching him minimal surfaces, and for her valuable time and insightful discussions during the writing of this paper. The author also acknowledges his co‑advisor Dr. Vishal Vasan, ICTS-TIFR for interesting discussions and valuable comments.
   
   \appendix
   
    \section{Appendix: Analytic Functions} \label{section: Appendix analytic function}  
   We begin by stating an important theorem on elliptic partial differential equations, following Theorem 1.1 in \cite{Bl}:  We consider a solution \(u \in C^\infty(\Omega, \mathbb{R})\) to the nonlinear partial differential equation
   \begin{equation}
   	\label{eqn: Nonlinear Elliptic PDE}
   	\phi(x, u(x), Du(x), D^2u(x)) = 0 \quad \text{on } \Omega
   \end{equation} 
   Here, \(\Omega \subset \mathbb{R}^n\) is an open subset of the Euclidean space of dimension \(n \ge 2\). We assume that \(\phi : \Omega \times \mathbb{R} \times \mathbb{R}^n \times S(n) \to \mathbb{R}\) is a real-analytic function, where \(S(n)\) is the set of all \(n\times n\) matrices, and suppose that the resulting partial differential equation is elliptic. Then we have the following
   \begin{theorem}
   	\label{thm: Nonlinear Elliptic PDE: analytic solns}
   	Let \(u \in C^\infty(\Omega, \mathbb{R})\) be a solution to the equation \ref{eqn: Nonlinear Elliptic PDE}, \(x_0 \in \Omega\), and assume that \(\phi\) is analytic in a neighborhood of \((x_0, u(x_0), Du(x_0), D^2u(x_0))\). Then \(u\) is real analytic near \(x_0\).
   \end{theorem}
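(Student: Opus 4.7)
The plan is to follow the classical Friedrichs--Morrey scheme for analytic regularity of smooth solutions to analytic elliptic PDEs. The strategy is not to produce an analytic continuation of \(u\) directly, but instead to establish Cauchy-type growth estimates
\[
\sup_{B_\rho(x_0)}|D^\alpha u|\le M\,C^{|\alpha|}\,|\alpha|!\qquad\text{for every multi-index }\alpha,
\]
on a small enough ball \(B_\rho(x_0)\subset\Omega\). Such bounds force the Taylor series of \(u\) at \(x_0\) to converge on a neighbourhood of \(x_0\), and a standard uniqueness argument then identifies this sum with \(u\) itself, giving real analyticity near \(x_0\).

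After translating so that \(x_0=0\), the first step will be to linearise the equation. Differentiating \ref{eqn: Nonlinear Elliptic PDE} once with respect to \(x_i\) produces a linear equation
\[
\sum_{|\beta|=2}a_\beta(x)\,\partial^\beta(\partial_i u)=F_i(x),
\]
whose leading coefficients \(a_\beta=\partial\phi/\partial(D^2u)_\beta\) are analytic functions of \((x,u,Du,D^2u)\) and, by the ellipticity hypothesis on \(\phi\), satisfy a uniform ellipticity bound on a small enough ball around \(0\). Iterating this differentiation yields, for every multi-index \(\alpha\), a linear uniformly elliptic equation for \(\partial^\alpha u\) whose right-hand side \(F_\alpha\) is a universal polynomial, with analytic coefficients depending on \((x,u,Du,D^2u)\), in the derivatives \(\{\partial^\beta u:|\beta|\le|\alpha|+1\}\).

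The second step will be to set up the inductive estimate on nested balls \(B_{r_k}\) with \(r_k=\rho(1-k/N)\). Interior Schauder estimates applied to the equation for \(\partial^\alpha u\) give
\[
\|\partial^\alpha u\|_{C^{2,\gamma}(B_{r_{k+1}})}\le C\bigl(\|F_\alpha\|_{C^{0,\gamma}(B_{r_k})}+\|\partial^\alpha u\|_{C^{0}(B_{r_k})}\bigr),
\]
with \(C\) depending only on the ellipticity constants and the \(C^{2,\gamma}\)-norm of \(u\) on the initial ball. The analyticity hypothesis on \(\phi\) enters through the method of majorants: a convergent majorant series for \(\phi\) dominates the combinatorial growth of \(F_\alpha\) with \(|\alpha|\), while the radius loss \(r_k-r_{k+1}=\rho/N\) supplies the factorial growth in the denominator needed to close the induction on \(|\alpha|\).

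The main obstacle is this third step: reconciling the combinatorial explosion in \(F_\alpha\) (which contains nonlinear products of derivatives of all orders up to \(|\alpha|+1\)) with the inductive hypothesis while tracking the shrinking radii. This is precisely the bookkeeping that the Friedrichs--Morrey majorant argument, or equivalently Morrey's complex-extension trick combined with Cauchy's integral formula applied to a holomorphic extension of \(\phi\) on a complex neighbourhood of \((x_0,u(x_0),Du(x_0),D^2 u(x_0))\), is designed to handle. Once the constants \(M\) and \(C\) are secured uniformly in \(\alpha\), the analyticity of \(u\) near \(x_0\) follows at once, completing the proof.
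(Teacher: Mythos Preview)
The paper does not prove this theorem at all: it is stated in the appendix as a quotation of Theorem~1.1 from \cite{Bl} and used as a black-box regularity tool. So there is no ``paper's own proof'' to compare against.

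Your outline is a faithful description of the classical Friedrichs--Morrey scheme, and as a roadmap it is correct: differentiate the equation to obtain linear elliptic equations for $\partial^\alpha u$, apply interior Schauder estimates on nested balls, and use a majorant coming from the analyticity of $\phi$ to close an inductive Cauchy-type bound $|D^\alpha u|\le M\,C^{|\alpha|}|\alpha|!$. That said, what you have written is a plan, not a proof. The step you yourself flag as the ``main obstacle''---reconciling the combinatorial explosion in $F_\alpha$ with the radius loss so that the constants $M,C$ are uniform in $|\alpha|$---is precisely where all the substance lies, and you have not carried it out; you have only named the machinery (majorant series, Morrey's complex-extension trick) that would do it. If this is meant to stand as an actual proof, that induction must be executed in full, which typically runs to several pages of careful bookkeeping. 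If instead you intend it as a justification for invoking the result, then the honest course is simply to cite one of the standard sources (Friedrichs, Morrey, or \cite{Bl}), exactly as the paper does.
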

   An analogous theorem for elliptic partial differential equations in two real variables was established earlier by Bernstein in \cite{Be}. This result is of particular importance to our work, as the minimal surface equation \ref{eqn: MSE} is a nonlinear elliptic partial differential equation that satisfies all the hypotheses of Theorem \ref{thm: Nonlinear Elliptic PDE: analytic solns}. We therefore conclude that all minimal graph surfaces are real analytic functions.
   
   The real analyticity of minimal graph surfaces can also be established independently of the aforementioned theorem. This has been demonstrated in \cite{Ni}, Page 125, \S 131; we state the result here:
   \begin{theorem}
   	\label{thm: MGS is real analytic}
   	A minimal surface \( S = \{ (x, y, z(x, y)) : (x, y) \in P \} \) is analytic. In other words: a twice continuously differentiable solution to the minimal surface equation in domain \( P \) is in fact real analytic in \( P \).
   \end{theorem}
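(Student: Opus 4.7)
The plan is to establish analyticity by the classical route of isothermal reparametrization, thereby reducing the statement to the well known analyticity of harmonic functions. First I would bootstrap the regularity of $z$: equation \ref{eqn: MSE} is a quasilinear uniformly elliptic PDE on compact subsets of $P$, whose structure depends real-analytically on $\nabla z$. Standard Schauder estimates together with a difference-quotient argument upgrade any $C^2$ solution to $C^\infty$, which ensures that all the geometric quantities introduced below are well defined.

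Next, around any point $p_0 \in P$, I would invoke the Korn-Lichtenstein theorem to produce an isothermal reparametrization of the graph: a local diffeomorphism $\Phi : U \to V$, with $U \subset \mathbb{R}^2$ open and $V$ a neighborhood of $p_0$ in $P$, such that the first fundamental form of the composed graph map $\Psi(u,v) := (\Phi(u,v), z(\Phi(u,v)))$ pulled back to $U$ is conformal, i.e.\ $E = G$ and $F = 0$. The classical identity for parametrized surfaces states that in isothermal parameters the mean curvature vector equals $\tfrac{1}{2E}\,\Delta_{(u,v)} \Psi$. Hence the condition that the graph is minimal, namely equation \ref{eqn: MSE}, is equivalent to each of the three components of $\Psi$ being harmonic on $U$.

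Because harmonic functions are real-analytic, all three components of $\Psi$ are real-analytic in $(u,v)$. In particular the two components of $\Phi$ are real-analytic, and since $\Phi$ is a diffeomorphism its Jacobian is nowhere zero; the real-analytic inverse function theorem then yields a real-analytic inverse $\Phi^{-1} : V \to U$. The identity
\[
z(x,y) = \Psi_3\!\left(\Phi^{-1}(x,y)\right)
\]
exhibits $z$ as a composition of real-analytic maps on $V$, hence real-analytic. As $p_0 \in P$ was arbitrary and real-analyticity is a local property, $z$ is real-analytic on all of $P$.

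The main obstacle will be the existence of the isothermal chart $\Phi$ with regularity matching that of the metric. Korn-Lichtenstein reduces this to solving a Beltrami equation with coefficients built from the first fundamental form; having already bootstrapped $z$ to $C^\infty$, these coefficients are smooth and the classical existence theorem applies cleanly. The only other subtle point is the identification of the minimal surface equation with componentwise harmonicity of $\Psi$ in isothermal parameters. This is standard but should be verified directly from $H = \tfrac{1}{2E}\,\Delta_{(u,v)}\Psi$ using the conformality relations $E = G$, $F = 0$, rather than merely cited, since it is precisely this step that converts the nonlinear PDE \ref{eqn: MSE} into the linear equation whose analyticity is classical.
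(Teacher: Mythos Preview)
The paper does not actually prove this theorem: it merely states the result and cites Nitsche \cite{Ni}, p.~125, \S131, and independently points to the general elliptic-regularity result Theorem~\ref{thm: Nonlinear Elliptic PDE: analytic solns} as an alternative source. So there is no in-paper proof to compare your proposal against.

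That said, your approach is correct and is precisely the classical argument underlying the cited result. The crux --- pass to isothermal parameters so that minimality becomes componentwise harmonicity, then use analyticity of harmonic functions together with the real-analytic inverse function theorem to transfer back to graph coordinates --- is the standard route in Osserman and Nitsche. Indeed, the paper itself relies on exactly this isothermal/harmonic machinery in the proof of Proposition~\ref{thm: MGS is non-constant everywhere}, citing \cite{Os}, Lemmas~4.2 and~4.4. Your identification of the two genuine technical ingredients (existence of the isothermal chart via Korn--Lichtenstein/Beltrami, and the equivalence $H=0 \Leftrightarrow \Delta_{(u,v)}\Psi=0$ in conformal parameters) is accurate. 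One minor remark: the preliminary bootstrap to $C^\infty$ is convenient but not strictly necessary --- the Beltrami coefficient is already H\"older continuous once $z\in C^2$, which suffices for Korn--Lichtenstein, and the harmonic components that result are automatically analytic regardless.
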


   In this paper we have mainly used Theorem \ref{thm: MGS is real analytic}. However, in the concluding section on future directions we considered maximal graph surfaces and referred to Theorem \ref{thm: Nonlinear Elliptic PDE: analytic solns} to establish their analyticity.
   
   We now state a fundamental property  of non‑zero real analytic functions: their zero sets have Lebesgue measure zero. This result is given in \cite{Mi}, Proposition~1, and we state it as follows:
   \begin{theorem}
   	\label{thm: zero set of real analytic fns}
   	Let \( A(x) \) be a real analytic function on a connected open domain \( U \subset \mathbb{R}^d \). If \( A \) is not identically zero, then its zero set \(F(A) := \{ x \in U : A(x) = 0 \}\)	has zero measure.     	
   \end{theorem}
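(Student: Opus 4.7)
The plan is to prove this by induction on the dimension $d$, using the fact that having measure zero is a local property.

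For the base case $d = 1$, I would show that a nonzero real analytic function on a connected open interval $U$ has isolated zeros. If some zero $x_0$ were non-isolated, the convergent power series expansion of $A$ about $x_0$ would force every Taylor coefficient to vanish, so all derivatives of $A$ at $x_0$ vanish. The subset of $U$ where all derivatives of $A$ vanish is simultaneously open (because the Taylor series is then identically zero in a neighborhood) and closed (by continuity of each derivative), and it is nonempty, so by connectedness it equals $U$, contradicting $A \not\equiv 0$. Hence $F(A)$ is discrete in an open subset of $\mathbb{R}$, therefore countable, therefore of Lebesgue measure zero.

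For the inductive step with $d \geq 2$, since $U$ is Lindel\"of it suffices to show that every point $x_0 \in U$ has a neighborhood $V$ with $|F(A) \cap V| = 0$; countable subadditivity then completes the proof. If $A(x_0) \neq 0$, continuity gives such a $V$ on which $F(A) \cap V$ is empty. Otherwise, by the identity principle $A$ is not identically zero in any neighborhood of $x_0$, so the Taylor expansion of $A$ at $x_0$ has a lowest-order nonzero homogeneous component $Q$ of some finite degree $n \geq 1$. Choose a direction $v$ with $Q(v) \neq 0$; then along the line $t \mapsto x_0 + tv$ the function $A$ has a zero of finite order $n$. After a linear change of coordinates (which affects Lebesgue measure only by a bounded factor) sending $v$ to the $x_d$-axis, the Weierstrass preparation theorem yields, in some neighborhood $V$ of $x_0$, a factorization
\[ A(x', x_d) = W(x', x_d)\, P(x', x_d), \]
where $W$ is real analytic and nowhere vanishing on $V$, and $P(x', x_d) = x_d^n + a_{n-1}(x') x_d^{n-1} + \cdots + a_0(x')$ is a Weierstrass polynomial whose coefficients $a_i$ are real analytic functions of $x'$.

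Since $W$ never vanishes, $F(A) \cap V = F(P) \cap V$; for each fixed $x'$, the monic polynomial $P(x', \cdot)$ has at most $n$ zeros in $x_d$, so every $x'$-slice of $F(P) \cap V$ has one-dimensional Lebesgue measure zero. Fubini's theorem then gives $|F(A) \cap V| = 0$, completing the induction. The main obstacle is the reduction to the Weierstrass preparation theorem, specifically arranging the finite-order vanishing along a coordinate axis, which is exactly what the homogeneous-Taylor-component argument above secures. If one prefers to avoid Weierstrass preparation altogether, an alternative is a direct Fubini-plus-induction argument: the ``bad'' set of $x'$ whose slice $A(x', \cdot)$ vanishes identically on some component of its fiber can be shown to have measure zero by applying the inductive hypothesis to the real analytic functions $x' \mapsto \partial_{x_d}^k A(x', x_d^*)$ for a suitably chosen base point $x_d^*$, and then invoking the base case on the remaining slices.
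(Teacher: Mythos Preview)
The paper does not actually prove this theorem: it is stated in the appendix with a citation to Mityagin \cite{Mi}, Proposition~1, and no argument is given. So there is no ``paper's own proof'' to compare against.

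Your proposal is a correct and standard proof. The base case is the usual identity-theorem argument, and the local reduction via the lowest nonzero homogeneous Taylor term to obtain a direction of finite-order vanishing, followed by real-analytic Weierstrass preparation and Fubini, is exactly the right machinery. One small organizational remark: the Weierstrass-preparation route you present does not in fact use the inductive hypothesis---once you know each $x'$-slice of $F(P)$ is finite, Fubini finishes the job directly in every dimension $d\ge 2$---so the induction scaffolding is superfluous for the main argument (it is only genuinely needed in your alternative slice-by-slice approach at the end). This is not an error, just a point where the write-up could be streamlined.
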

   
   We also used several basic properties of analytic functions, for which we refer to \cite{KH}. The specific results we require can be found in Proposition 2.2.2 on page 29 and Proposition 2.2.3 on page 30 of that text.
   
   \begin{proposition}
   	\label{thm: algebra of real analytic functions}
   	Let \( U, V \subset \mathbb{R}^m \) be open. If \( f : U \to \mathbb{R} \) and \( g : V \to \mathbb{R} \) are real analytic, then \( f + g \) and \( f \cdot g \) are real analytic on \( U \cap V \), and \( f / g \) is real analytic on \( U \cap V \cap \{ x : g(x) \neq 0 \}. \)
   \end{proposition}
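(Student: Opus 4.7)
The plan is to work locally and use the defining property of real analyticity: at every point of the domain, the function agrees with a convergent power series in some neighborhood. Fix any point $x_0 \in U \cap V$. Since $f$ and $g$ are real analytic, there exist radii $r_f, r_g > 0$ and multi-indexed coefficients $\{a_\alpha\}, \{b_\alpha\}$ such that $f(x) = \sum_\alpha a_\alpha (x-x_0)^\alpha$ on $B(x_0, r_f)$ and $g(x) = \sum_\alpha b_\alpha (x-x_0)^\alpha$ on $B(x_0, r_g)$, with absolute convergence in those balls. Setting $r = \min(r_f, r_g)$, I work inside $B(x_0, r)$.

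For the sum $f+g$, I would simply add the two power series termwise. Since each series converges absolutely on $B(x_0, r)$, the termwise sum $\sum_\alpha (a_\alpha + b_\alpha)(x-x_0)^\alpha$ also converges absolutely on $B(x_0, r)$ and equals $f(x)+g(x)$ there by the standard rearrangement theorem for absolutely convergent series; hence $f+g$ is real analytic at $x_0$. For the product $f \cdot g$, I would invoke the multivariate Cauchy product: define $c_\alpha = \sum_{\beta + \gamma = \alpha} a_\beta b_\gamma$ and verify that $\sum_\alpha c_\alpha (x-x_0)^\alpha$ converges absolutely on $B(x_0, r)$ by bounding $\sum_\alpha |c_\alpha| |x-x_0|^\alpha \leq \bigl(\sum_\alpha |a_\alpha| |x-x_0|^\alpha\bigr)\bigl(\sum_\alpha |b_\alpha| |x-x_0|^\alpha\bigr) < \infty$, and then identifying this product series with $f(x)g(x)$ via a rearrangement argument. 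Since $x_0$ was arbitrary in $U \cap V$, both $f+g$ and $f \cdot g$ are real analytic throughout $U \cap V$.

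The quotient $f/g$ reduces to the product case once I establish that $1/g$ is real analytic on $V \cap \{g \neq 0\}$. For this, pick $x_0$ with $g(x_0) \neq 0$, expand $g(x) = g(x_0)\bigl(1 + h(x)\bigr)$ where $h(x) = \bigl(g(x) - g(x_0)\bigr)/g(x_0)$ vanishes at $x_0$. By continuity of analytic functions there is a ball $B(x_0, \rho)$ on which $|h(x)| < 1/2$, and on this ball the geometric series expansion
\[
\frac{1}{g(x)} = \frac{1}{g(x_0)} \sum_{n=0}^\infty (-1)^n \bigl(h(x)\bigr)^n
\]
converges absolutely and uniformly. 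Each $h^n$ is real analytic at $x_0$ (by the product result already established, applied inductively), and the sum of a uniformly absolutely convergent series of analytic functions whose coefficient series rearranges into a single multivariate power series is again real analytic; rearranging the double sum by collecting coefficients of each monomial $(x-x_0)^\alpha$ yields the desired power series for $1/g$. Then $f/g = f \cdot (1/g)$ is real analytic on $U \cap V \cap \{g \neq 0\}$ by the product case.

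The main obstacle is the justification of rearrangements in the product and reciprocal steps: in several variables one must carefully verify that the double (or iterated) sums defining the coefficients $c_\alpha$ and the expansion of $1/g$ can be reorganized into an absolutely convergent power series in $(x-x_0)$. This is handled by Fubini's theorem for absolutely convergent sums together with the geometric majorant $\sum_\alpha (|a_\alpha| + |b_\alpha|) |x-x_0|^\alpha$, but it is the only place where any real analytic care is required; everything else is formal manipulation.
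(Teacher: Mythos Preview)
Your approach is correct and is essentially the standard proof of this result. However, note that the paper does not actually provide a proof of this proposition: it is stated in the appendix as a quoted background fact, with an explicit citation to Proposition~2.2.2 in Krantz--Parks, \emph{A Primer of Real Analytic Functions}. So there is no ``paper's own proof'' to compare against; your argument is precisely the kind of direct power-series verification that the cited reference carries out.
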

   
   \begin{proposition}
   	\label{thm: calculus of real analytic functions}
   	Let \( f \) be a real analytic function defined on an open subset \( U \subseteq \mathbb{R}^m \).
   	Then \( f \) is continuous and has continuous, real analytic partial derivatives of all orders.
   	Further, the indefinite integral of \( f \) with respect to any variable is real analytic.
   \end{proposition}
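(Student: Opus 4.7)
The plan is to work entirely with the local power series representation of $f$. Since $f$ is real analytic on $U$, for every $x_0 \in U$ there exist a polyradius $r = (r_1, \dots, r_m)$ with each $r_i > 0$ and coefficients $c_\alpha \in \mathbb{R}$ (indexed by multi-indices $\alpha \in \mathbb{N}^m$) such that
\[
f(x) = \sum_{\alpha} c_\alpha (x - x_0)^\alpha
\]
with absolute convergence on the open polydisc $P(x_0, r) = \{x : |x_i - x_{0,i}| < r_i\}$. The entire argument will be a matter of transferring standard term-by-term operations (differentiation, integration) through this power series and checking that the resulting series is again absolutely convergent on the same polydisc, thereby exhibiting the candidate function as real analytic near $x_0$.

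First I would verify continuity. Standard estimates (see, e.g., the Cauchy-type bound used in \cite{KH}) give $M_\rho := \sum_\alpha |c_\alpha|\rho^\alpha < \infty$ for any polyradius $\rho$ strictly smaller than $r$ componentwise. Hence the series converges uniformly on the closed polydisc $\overline{P(x_0,\rho)}$, and being a uniform limit of continuous polynomials, $f$ is continuous at $x_0$. Next, for each index $j \in \{1,\dots,m\}$ I would formally differentiate the series term-by-term to obtain
\[
\sum_{\alpha} \alpha_j\, c_\alpha (x - x_0)^{\alpha - e_j},
\]
where $e_j$ is the $j$-th standard multi-index. The majorant $\sum_\alpha \alpha_j |c_\alpha| \rho^{\alpha - e_j}$ remains finite on any polydisc of polyradius $\rho$ strictly inside $r$, because the factor $\alpha_j$ can be absorbed by a slight shrinking of one of the radii (a standard Abel-type geometric-series trick: $\alpha_j t^{\alpha_j - 1}$ is bounded by $C(\epsilon) s^{\alpha_j}$ for $s > t$). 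Thus the differentiated series converges absolutely and uniformly on compact subsets of $P(x_0, r)$, which by a classical theorem on uniform convergence of derivatives identifies its sum with $\partial_j f$ and simultaneously exhibits $\partial_j f$ as represented by a convergent power series around $x_0$. Iterating gives real analyticity and continuity of partial derivatives of every order.

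For the indefinite integral in, say, the $j$-th variable, I would analogously form
\[
F(x) = \sum_\alpha \frac{c_\alpha}{\alpha_j + 1}\, (x - x_0)^{\alpha + e_j},
\]
and observe that the majorant now has its coefficients reduced by the factor $1/(\alpha_j + 1) \le 1$, so convergence on $P(x_0, r)$ is immediate from the convergence of the original series. Term-by-term integration is justified by uniform convergence on compacta, and by construction $\partial_j F = f$, so $F$ is an indefinite integral and is real analytic at $x_0$. Since $x_0 \in U$ was arbitrary, the real analyticity of partial derivatives and indefinite integrals holds throughout $U$.

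The only real subtlety (and hence the main technical step to be careful about) is the Abel-type estimate showing that multiplying the coefficients $c_\alpha$ by the polynomial factor $\alpha_j$ does not destroy convergence on any polydisc strictly interior to the original domain of convergence; once this lemma is in place, both the differentiation and integration statements follow by essentially the same bookkeeping. Everything else reduces to routine manipulation of absolutely convergent series.
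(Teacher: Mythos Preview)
Your proof proposal is correct and follows the standard power-series route: use absolute convergence of the Taylor series on a polydisc, establish uniform convergence on compact sub-polydiscs, and justify term-by-term differentiation and integration via the Abel-type majorant estimate you identify.

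The paper, however, does not actually prove this proposition: it merely quotes it verbatim from Krantz--Parks \cite{KH} (Proposition~2.2.3) as a background result in the appendix, with no argument supplied. So there is nothing to compare on the level of technique---your sketch is a genuine proof where the paper offers only a citation. If anything, your write-up is more complete than what appears in the paper; the one point worth tightening in an actual proof would be the precise statement and verification of the Abel lemma (that $\sum_\alpha \alpha_j |c_\alpha| \rho^\alpha < \infty$ whenever $\rho$ is strictly interior to the domain of convergence), but you correctly flag this as the only nontrivial step.
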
    
   
   \section{Appendix: Elliptic Integrals and Jacobi Elliptic Functions}\label{section: Elliptic funnctions}
   In this subsection we provide a brief discussion of elliptic integrals and Jacobi elliptic functions, which arise as special integrals in numerous problems in geometry and mechanics; for further details one may refer \cite{La}, which offers a special-function perspective on these functions, or \cite{TT}, which presents a geometric viewpoint. 
   
   \begin{definition}
   	Let \( R(x, s) \) be a rational function in two variables \( x \) and \( s \), depending non-trivially on \( s \), that is, \(\frac{\partial R}{\partial s} \neq 0.\)  Let \( \varphi(x) \) be a square-free polynomial of degree three or four in \( x \); in other words, \( \varphi(x) \) possesses three or four distinct roots. Then, an integral of the form: \(\int R\bigl(x, \sqrt{\varphi(x)}\bigr)\, dx\) is called an \textit{elliptic integral}.
   \end{definition}
   
   The well-known Legendre--Jacobi reduction theorem states that any elliptic integral can be decomposed into a linear combination of elementary functions and the three fundamental kinds of elliptic integrals, namely the elliptic integrals of the first, second, and third kinds:
   
   \textbf{First Kind}: $F(\varphi, k) = \int_{0}^{\varphi} \frac{d\theta}{(1 - k^2 \sin^2 \theta)^{1/2}}
   = \int_{0}^{\sin \varphi} \frac{dx}{\sqrt{(1 - x^2)(1 - k^2 x^2)}}$
   
   \textbf{Second Kind}: $E(\varphi, k) = \int_{0}^{\varphi} (1 - k^2 \sin^2 \theta)^{1/2} \, d\theta
   = \int_{0}^{\sin \varphi} \frac{\sqrt{1 - k^2 x^2}}{\sqrt{1 - x^2}} \, dx$
   
   \textbf{Third Kind}: $\Pi(\varphi, n, k) = \int_{0}^{\varphi} \frac{d\theta}{(1 - n \sin^2 \theta)(1 - k^2 \sin^2 \theta)^{1/2}}
   = \int_{0}^{\sin \varphi} \frac{dx}{(1 - n x^2)\sqrt{(1 - x^2)(1 - k^2 x^2)}}$ where $n$ is the characteristic, $k$ is the modulus ($0 \leq k < 1$) and $\phi$ is the amplitude.\\
   
   \noindent
   \textbf{Relation Between Elliptic Integrals and Jacobi Elliptic Functions:}	The elliptic integral of the first kind defines a relationship between $u$ and $\phi$: \(u = F(\phi, k) = \int_0^{\phi} \tfrac{d\theta}{\sqrt{1 - k^2 \sin^2\theta}}.\)	The inverse of this function is called the \textit{Jacobi amplitude function}: \(\phi = \text{am}(u, k).\) Using the amplitude, the three primary Jacobi elliptic functions are defined as:
   
   \(
   \operatorname{sn}(u, k) = \sin(\phi) \qquad \operatorname{cn}(u, k) = \cos(\phi) \qquad \operatorname{dn}(u, k) = \sqrt{1 - k^2 \sin^2(\phi)}
   \)
   
   \noindent
   These are generalizations of the trigonometric sine and cosine, depending on modulus \(k\).
   
   \noindent
   \textbf{Other Jacobi Elliptic Functions:} From the three primary functions, nine more can be defined as follows: \(\text{ns}(u,k)=\frac{1}{\text{sn}(u,k)}\), \(\;\;\;\text{nc}(u,k)=\frac{1}{\text{cn}(u,k)}\), \(\;\;\;\text{nd}(u,k)=\frac{1}{\text{dn}(u,k)}\), \(\;\;\;\text{sc}(u,k)=\frac{\text{sn}(u,k)}{\text{cn}(u,k)}\), \(\;\;\;\text{sd}(u,k)=\frac{\text{sn}(u,k)}{\text{dn}(u,k)}\), \(\;\;\;\text{cd}(u,k)=\frac{\text{cn}(u,k)}{\text{dn}(u,k)}\), \(\;\;\;\text{cs}(u,k)=\frac{1}{\text{sc}(u,k)}\), \(\;\;\;\text{ds}(u,k)=\frac{1}{\text{sd}(u,k)}\) and \(\;\;\;\text{dc}(u,k)=\frac{1}{\text{cd}(u,k)}\)\\
   
   \noindent
   \textbf{Inverse Jacobi Elliptic Functions: }The inverses of Jacobi elliptic functions can be expressed using elliptic integrals of the first kind.
   
   Inverse of $\text{sn}$: \(\text{sn}^{-1}(x, k) = F(\arcsin x, k) = \int_0^x \frac{dt}{\sqrt{(1 - t^2)(1 - k^2 t^2)}}\)	
   
   Inverse of $\text{cn}$: \(\text{cn}^{-1}(x, k) = F(\arccos x, k) = \int_x^1 \frac{dt}{\sqrt{(1 - t^2)(k'^2 + k^2 t^2)}}\) where \(k' = \sqrt{1 - k^2}\)
   
   Inverse of $\text{dn}$:	\(\text{dn}^{-1}(x, k) = F(\arccos(\tfrac{x}{k}), k) = \int_x^1 \frac{dt}{\sqrt{(1 - t^2)(t^2 - k'^2)}}\)
   
   \noindent
   \textbf{Derivatives: }Their derivatives of basic Jacobi Elliptic Functions are given by:

   Derivative of \(\operatorname{sn}\): \(\frac{d}{du}\operatorname{sn}(u, k) = \operatorname{cn}(u, k)\operatorname{dn}(u, k)\)
   \vspace{0.2cm}
   
   Derivative of \(\operatorname{cn}\): \(\frac{d}{du}\operatorname{cn}(u, k) = -\operatorname{sn}(u, k)\operatorname{dn}(u, k)\)
   \vspace{0.2cm} 
   
   Derivative of \(\operatorname{dn}\): \(\frac{d}{du}\operatorname{dn}(u, k) = -k^2 \operatorname{sn}(u, k)\operatorname{cn}(u, k)\)

\end{document}